\newtheorem{satz}{Theorem}
\newtheorem{proposition}[satz]{Proposition}
\newtheorem{theorem}{Theorem}
\newtheorem{lemma}[satz]{Lemma}
\newtheorem{corollary}[satz]{Corollary}
\newtheorem{remark}[satz]{Remark}
\def\T{\mathsf{T}}
\def\Z{\mathbb {Z}}
\def\F{\mathbb {F}}
\def\R{\mathbb {R}}
\def\E{\mathsf{E}}
\def\a{\alpha}
\def\C{\mathbb{C}}
\def\({\big (}
\def\){\big )}
\def\G{\Gamma}
\def\le{\leqslant}
\def\ge{\geqslant}
\def\_phi{\varphi}
\def\Gr{{\mathbf G}}
\def\ov{\overline}
\def\Spec{{\rm Spec\,}}
\renewcommand{\thetheorem}{\arabic{theorem}.}
\def\D{\Delta}
\def\oT{{\rm T}}
\def\tr{{\mathfrak{tr}}}
\newcommand{\ve}{\boldsymbol v}
\title{
On the few products, many sums problem}
\author{Brendan Murphy, Misha Rudnev, Ilya D. Shkredov and Yurii N. Shteinikov}
\begin{document}
\maketitle
\begin{abstract} We prove new results on additive properties of finite sets $A$ with small multiplicative doubling $|AA|\leq M|A|$ in the category of real/complex sets as well as multiplicative subgroups in the prime residue field. The improvements are based on new combinatorial lemmata, which may be of independent interest.

Our main results are the inequality 
$$
|A-A|^3|AA|^5 \gtrsim |A|^{10},
$$
over the reals, ``redistributing'' the exponents in the textbook Elekes sum-product inequality
and the new best known additive energy bound
$\E(A)\lesssim_M |A|^{49/20}$, which aligns, in a sense to be discussed, with the best known sum set bound $|A+A|\gtrsim_M |A|^{8/5}$.

These bounds, with $M=1$, also apply to multiplicative subgroups of $\F^\times_p$, whose order is $O(\sqrt{p})$. We adapt the above energy bound to larger subgroups and
obtain new bounds on gaps between elements in cosets of subgroups of order $\Omega(\sqrt{p})$.
\end{abstract}

\section{Introduction}  Let $A$ be a finite set in a field. We use the standard notation $A\pm A,\,AA,\,A/A $  for the sets of all sums, differences, products and finite ratios of pairs of elements of $A$, as well as $A^{-1}$ for the set of inverses, $A+a=A+\{a\}$ for translates, etc. By $A^k$, however, we mean the $k$-fold Cartesian project of $A$ with itself.

The Erd\H os-Szemer\'edi \cite{ES} or sum-product conjecture applied to reals challenges one to prove that $\forall \epsilon>0,$
\begin{equation}\label{ers}
|AA|+|A+ A| \geq |A|^{2-\epsilon},
\end{equation}
for all sufficiently large $A\subset \R$.

The {\em weak Erd\H os-Szemer\'edi conjecture}, or {\em few products, many sums} is a claim that
if $A$ has small multiplicative doubling, that is $|AA| \leq M|A|$ for some $M\geq 1$, then $|A\pm A|\gtrsim_M |A|^{2}$, where the inequality symbols $\gtrsim_M,\lesssim_M$  will subsume universal constants, powers of $\log|A|$  (logarithms are meant to be  base $2$) and powers of $M$ if the subscript ${\,}_M$ is present; constants alone are suppressed by the standard Vinogradov notation $\ll,\gg$ and, respectively $O,\Omega$ (as well as $\approx$ for both $O$ and $\Omega$); these can also be subscripted by ${\,}_M$ to hide powers of $M$. When both $\lesssim$ and $\gtrsim$ bounds hold we may use the symbol $\sim$. To ensure not dividing by zero, it is assumed by default in all formulations that $|A|>1$ as well as $0\not\in A$. 

In this paper we address the case $A \subset \R$ as well as when $A$ is a multiplicative subgroup of the multiplicative group $\F^\times_p$ of the prime residue field $\F_p$. All our results over the reals apply to the complex field as well: we do not make a distinction and {\em real} may be read in the sequel as {\em real or complex}.

The weak Erd\H os-Szemer\'edi conjecture appears to be the key issue in understanding the more general sum-product phenomenon, see e.g. a survey \cite{GS} and the references therein. If in its above formulation one allows exponential dependence on $M$,  the affirmative answer was established by  Chang and Solymosi \cite{ChS} via a variant of Schmidt's subspace theorem.

The strongest few products, many sums result known so far is due to by Bourgain and Chang \cite{BCh} in the context of integers (rationals), the proof relying strongly on the main theorem of arithmetics. It claims that for any $\epsilon>0$, there is a power $C(\epsilon)$, so that $|A\pm A|\gg M^{-C(\epsilon)}|A|^{2 - \epsilon},$ although $C(\epsilon)$ goes to infinity as $\epsilon\to 0$.

Our results are somewhat different in flavour, for our dependence $C(\epsilon)$ is linear (as well as all the constants involved are ``reasonable'' and computable); what we cannot do is go below $\epsilon= {1/3}$.

\medskip
The converse question {\em few sums, many products} is resolved over the reals, where it was shown by Elekes and Ruzsa  \cite{ER} to follow from the Szemer\'edi-Trotter theorem. Its strongest quantitative version is implied by Solymosi's \cite{So} inequality $|AA||A+A|^2\gtrsim |A|^4$, although this does not embrace $A-A$. Moreover, \cite[Theorem 12]{RSS} presents an affirmative quantitative estimate to an $L^2$-variant of the question. The few sums, many products question is not settled in positive characteristic, the best known results being  \cite[Theorem 2]{57}.

\medskip
One can target various type of estimates along the lines of the few products, many sums  -- henceforth FPMS -- phenomenon (the acronym is meant to embrace differences and ratios as well) in terms of the properties of the number of realisations function $r_{A\pm A}(x)$, namely
$$r_{A\pm A}(x) = : |\{ (a,b)\in A\times A:\,a\pm b = x\}|$$ and its moments, in particular
$$
\E(A) = : \sum_x r^2_{A\pm A}(x),
$$
known as (additive) energy. Energy is independent of the choice of $\pm$, being the number of solutions of the equation $a_1+a_2=a_3+a_4,$ with variables in $A$, which can be rearranged. A fruitful viewpoint at looking at $r_{A- A}(x)$ is that $x$ represents an equivalence class on $A\times A$ by translation, with $r_{A- A}(x)$ members.

\medskip
The three types of FPMS bounds one may be interested in are as follows.
\begin{itemize}
\item {\sf Convolution support:}  inequalities $|A\pm A|\gg_M |A|^{2-\epsilon}$ aiming at $\epsilon \to 0_+$.
\item {\sf Energy, or $L^2$-bounds:} inequalities $\E(A)\ll_M |A|^{2+\epsilon}$ aiming at $\epsilon \to 0_+$.
\item {\sf $L^\infty$-bounds:} inequalities $r_{A+A}(x) \ll_M |A|^{\epsilon}$, aiming at $\epsilon \to 0_+$, same for $r_{A-A}(x)$ for $x\neq 0$.
\end{itemize}
As far as the last question is concerned, there is a bound $O(|A|^{2/3})$ implied by a single application of the Szemer\'edi-Trotter theorem; nothing better appears to be known, and we therefore do not press the issue any further.

So this paper addresses the first two bullet points. Techniques available today have limited powers, and those in this paper prefer differences to sums, owing to shift-invariance.  


Energy bounds clearly imply ones for support: by the Cauchy-Schwarz inequality
$$
|A\pm A|\geq \frac{|A|^4}{\E(A)}.$$
Energy bounds are harder to establish, and in order to improve the threshold exponent $5/2$, which applies in a very wide context \cite{RRS}, the third author set forth in \cite{S_sp} an eigenvalue technique, which was then further developed in \cite{SS1},  \cite{S_ineq}. The corresponding threshold exponent $3/2$ for the support size can be improved without it, see, e.g. \cite{Li_R-N} as well as the proofs of Theorem \ref{thm:1}, \ref{thm:sums} in the sequel. For an exhaustive exposition of the eigenvalue method see \cite[Section 4]{s_mixed}. 

We are quite far from being able to prove the match $\E(A)\lesssim_M|A|^{7/3}$ to our support estimate \eqref{main}. In fact, even for the sum set replacing the difference set, the best known FPMS exponent is $8/5$ \cite{S_ineq}. We dedicate the last section of this paper to the sum set case, deriving a FPMS inequality with what we believe to be the best $M$-dependence in $|A+A|\gtrsim_M|A|^{8/5}$. We also remark that if we could improve the key estimate \eqref{Sig} in the proof of the energy Theorem \ref{thm:new_energy}, this would also improve the sum set exponent $8/5$. 

As some consolation, as well as possibly a principle obstacle against improving the estimates in this paper, given the technique, we take the example by Balog and Wooley \cite{BW}, also discussed in \cite{RSS}, which shows that generally the exponent $7/3$ is the best possible one for energy inequalities: there are sets $A$, such that any positive proportion subset $A'\subseteq A$ would have both $\E(A)$ and its multiplicative analogue exceeding $|A|^{7/3}$. This is unlikely to happen in the extremal FPMS case but nonetheless, together with the results in this paper bears some evidence that one can hardly expect to be able to establish $L^2$-estimates, which would be equally strong to support ones; certainly this is the case within the applicability of the techniques we possess. In fact, the methods in this paper are essentially {\em energy methods}, that is the multiplicative constant $M$ can be viewed as equal to $|A|^3/\E^\times(A)$, where $\E^\times(A)$ is multiplicative energy, and the key estimates, with some work, can be re-cast in terms of the Balog-Wooley decomposition set forth in \cite{BW}, using the state-of-the-art techniques, developed in \cite{RSS}.

We nonetheless do prove the new best known energy bound, owing largely to Lemma \ref{l:brl}, which enables us to tie together two important combinatorial characteristics of the set $A$ with small multiplicative doubling. 

$L^2$-estimates represent much interest as to many questions, arising in the context of multiplicative subgroups in $\F_p$. Our new energy bound brings an improvement to several such bounds in the literature, some being mentioned in due course.  One application we spell out explicitly in Section \ref{sec:F_p} concerns the question of maximum gap size between coset elements, Theorem \ref{gap}.

\renewcommand{\thetheorem}{\arabic{theorem}}
\medskip
Our key geometric tool is the Szemer\'edi-Trotter theorem \cite{sz-t}.
\begin{theorem} \label{SzT} Consider  a set of $n$ points in $\R^2$. Connect all pairs of distinct points by straight lines, then for $k\geq 2$, the number of lines supporting at least $k$ points is 
\begin{equation}
O\left(\frac{n^2}{k^3} + \frac{n}{k}\right).
\label{strich}\end{equation}
The total number of incidences between $n$ points  and $m$ straight lines is ${\displaystyle O(m^{2/3}n^{2/3}+m+n).}$
\end{theorem}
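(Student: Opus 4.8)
The plan is to derive the incidence bound from the crossing number inequality (Sz\'ekely's argument) and then read off the bound on $k$-rich lines as a corollary, so I would prove the two statements in the reverse of the order displayed. The one external input is the \emph{crossing lemma}: a simple graph drawn in the plane with $v$ vertices and $e\geq 4v$ edges has at least $e^3/(64v^2)$ crossing pairs of edges. This follows from Euler's formula (a simple planar graph has $e\leq 3v$) together with a random vertex-sampling argument; I would simply quote it, since reproving it would be the only substantial piece of work in an otherwise bookkeeping exercise.

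So let $P$ be a set of $n$ points and $L$ a set of $m$ lines, with $I=I(P,L)$ incidences. Ignoring lines that miss $P$ entirely, build the geometric graph $G$ on vertex set $P$ whose edges join consecutive points of $P$ along each line of $L$; a line through $k_\ell\geq 1$ of the points contributes $k_\ell-1$ edges, so $G$ has $v=n$ vertices and $e=\sum_\ell(k_\ell-1)\geq I-m$ edges. Two edges of $G$ cross only if they lie on distinct lines, and two lines meet at most once, so $G$ has fewer than $m^2$ crossings. If $e\geq 4n$, the crossing lemma yields $(I-m)^3/(64n^2)\leq e^3/(64v^2)<m^2$, whence $I\ll m^{2/3}n^{2/3}+m$; and if $e<4n$ then trivially $I<m+4n$. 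In all cases $I(P,L)=O(m^{2/3}n^{2/3}+m+n)$, which is the second assertion.

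For the first assertion, fix $k\geq 2$, let $L_k$ be the set of lines carrying at least $k$ of the $n$ points, and set $m=|L_k|$. Since each line of $L_k$ contributes at least $k$ incidences, $km\leq I(P,L_k)\ll m^{2/3}n^{2/3}+m+n$. The term $m$ on the right can dominate only if $k=O(1)$, and in that range the claimed bound $n^2/k^3=\Omega(n^2)$ already exceeds the trivial count $|L_k|\leq\binom{n}{2}$; discarding that term, $km\ll m^{2/3}n^{2/3}$ gives $m\ll n^2/k^3$ and $km\ll n$ gives $m\ll n/k$. Hence the number of $k$-rich lines is $O(n^2/k^3+n/k)$, as claimed.

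The main (indeed only) nontrivial ingredient is thus the crossing lemma; everything else is routine, the one mild subtlety being the small-$k$ case analysis just described. As alternatives one could instead run the original cell-partition argument of Szemer\'edi and Trotter, or the polynomial-partitioning proof, but the crossing-number route is by far the shortest.
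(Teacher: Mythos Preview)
Your proof is correct and is the standard Sz\'ekely crossing-number argument. Note, however, that the paper does not prove this theorem at all: it is quoted as the classical Szemer\'edi--Trotter theorem with a citation to \cite{sz-t}, and the authors remark only that for Cartesian product point sets there is an easier proof due to Solymosi and Tardos \cite{SoT}. So there is nothing in the paper to compare your argument against; you have supplied a complete proof where the paper merely invokes the result as a black box.
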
 
In fact, in our applications the point set is a Cartesian product. In this case there is an easier proof of estimate \eqref{strich} by Solymosi and Tardos \cite{SoT}, in particular the hidden constants having very reasonable values in both the real and complex settings.

\medskip

\renewcommand{\thetheorem}{\arabic{theorem}'}
\addtocounter{theorem}{-1}
Heath-Brown and Konyagin \cite{H-K} used the Stepanov method to prove a quantitatively similar result about multiplicative subgroups in $\F^\times_p$. This was further developed in \cite{Mit}, \cite{SV}, the statement we quote can be found in \cite{Mit}.

\begin{theorem}
	\label{t:mitkin}
	Let $\Gamma$ be a multiplicative subgroup in $\F^\times_p$ and 
	$\Theta \subset \F^\times_p/\G \times \F^\times_p/\G$, 
	$|\G|^4 |\Theta| < p^3$ and $|\Theta| \le 33^{-3} |\G|^2$.  
	Then
	\begin{equation}\label{eq_t:mitkin}
	\sum_{(u,v) \in \Theta}\Bigl|\{(x,y) \in \Gamma \times \G : ux+vy=1\}\Bigr| \ll (|\Gamma||\Theta|)^{2/3} \,.
	\end{equation}
\end{theorem}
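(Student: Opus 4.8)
\medskip

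\noindent\textit{Proof plan.} The intention is to prove this by Stepanov's polynomial method, following Heath-Brown--Konyagin \cite{H-K} and \cite{Mit}, \cite{SV}; observe first that the bound is genuinely stronger than anything obtained by summing the single-equation estimate over $\Theta$, since $(|\Gamma||\Theta|)^{2/3}\le|\Theta|\,|\Gamma|^{2/3}$, so a new argument is called for.

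First I would put everything in one variable. Set $t=|\Gamma|$ and $T=|\Theta|$, so that $t\mid p-1$ and $\Gamma=\{z\in\F_p:z^t=1\}$. For a coset pair $(\bar u,\bar v)$ the inner count $|\{(x,y)\in\Gamma\times\Gamma:ux+vy=1\}|$ is independent of the chosen representatives, and the substitution $z=vy$ turns it into $|\{z\in\F_p:z^t=v^t,\ (1-z)^t=u^t,\ z\neq1\}|$. Since elements of $\Gamma$ have $t$-th power $1$, the value $u^t$ depends only on $\bar u$ and $u^t=(u')^t\iff\bar u=\overline{u'}$; hence the pairs $(u^t,v^t)$ over $(\bar u,\bar v)\in\Theta$ are pairwise distinct. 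Writing $\Theta^{\ast}=\{(u^t,v^t):(\bar u,\bar v)\in\Theta\}$, so $|\Theta^{\ast}|=T$, the left side of \eqref{eq_t:mitkin} equals $|Z|$ with $Z=\{z\in\F_p:((1-z)^t,z^t)\in\Theta^{\ast}\}$. The task is thus to show $|Z|\ll(tT)^{2/3}$. A routine dyadic pigeonholing reduces this to one scale: for dyadic $\Delta\in[1,t]$ let $Z_\Delta$ collect those $z\in Z$ whose fibre $\{z':(z')^t=z^t,\ (1-z')^t=(1-z)^t\}$ has size in $[\Delta,2\Delta)$, so $|Z_\Delta|\approx\Delta w$ with $w\le T$ the number of pairs of $\Theta^{\ast}$ active at that scale, and it suffices to bound each $|Z_\Delta|$.

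The core is the Stepanov auxiliary polynomial. On $Z_\Delta$ one has two algebraic constraints, $z^t\in\pi_2(W)$ and $(1-z)^t\in\pi_1(W)$ with $|\pi_i(W)|\le w$, coupled by membership of the pair $((1-z)^t,z^t)$ in the $w$-element set $W=\Theta^{\ast}_\Delta$. Using these relations I would build, by solving a homogeneous linear system in sufficiently many free parameters (an adapted combination of the monomials $X^i$ with low powers of the slowly varying quantities $X^t$ and $(1-X)^t$ over the active pairs), a \emph{nonzero} polynomial $\Psi\in\F_p[X]$ of degree $D<p$ vanishing to order $\ge r$ at every point of $Z_\Delta$, or of a fixed proportion of it. One balances so that the number of free parameters beats the $\approx r|Z_\Delta|$ vanishing conditions while $D$ stays below both $p$ and the naive $t\,r\,|Z_\Delta|$; then $r|Z_\Delta|\le D$, and optimizing the parameters yields $|Z_\Delta|\ll(tw)^{2/3}$ and hence $|Z|\ll(tT)^{2/3}$.

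The step I expect to fight hardest with is exactly this construction: choosing the shape of $\Psi$ so that its degree is controlled, and --- the customary soft point of Stepanov arguments --- proving $\Psi\not\equiv0$, usually by exhibiting a monomial of prescribed degree whose coefficient cannot vanish, which is precisely what forces $D<p$. It is also here that the hypotheses are spent: $|\Gamma|^4|\Theta|<p^3$ is the degree budget keeping $D<p$ after the optimization, while $|\Theta|\le33^{-3}|\Gamma|^2$ keeps the construction non-degenerate --- the vanishing order and the degree ratios staying in the useful range --- and fixes the explicit constant. Next to this, the reformulation and the dyadic bookkeeping are routine.
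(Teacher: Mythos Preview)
The paper does not prove Theorem~\ref{t:mitkin}; it is quoted as a known result from the literature, specifically from \cite{Mit} (building on \cite{H-K}, \cite{SV}), and used as a black box throughout. So there is no ``paper's own proof'' to compare against.

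That said, your plan is the right one and matches the approach of the cited sources: the Stepanov polynomial method, applied after the reduction to counting $z$ with $((1-z)^t,z^t)$ in a prescribed set of pairs, is exactly how \cite{H-K}, \cite{Mit}, \cite{SV} proceed. Your one-variable reformulation is correct, and your identification of the two hypotheses with, respectively, the degree budget $D<p$ and the non-degeneracy of the parameter count is accurate.

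What you have written, however, is a proof \emph{plan}, not a proof. The entire content of the Stepanov argument lies in the step you describe only schematically: the precise shape of $\Psi$ (which monomials, which auxiliary variables standing in for $X^t$ and $(1-X)^t$, how the set $W$ enters), the linear-algebra count that guarantees a nontrivial solution, the verification that differentiation does not raise the degree past the budget, and --- as you correctly flag --- the nonvanishing argument. None of these are carried out. If this were submitted as a proof it would be incomplete; as a roadmap for reconstructing the argument from \cite{Mit} or \cite{SV} it is sound.
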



We observe, and will use, the heuristic fact that both Theorem \ref{SzT} and 
\ref{t:mitkin}
would yield the same main factor in the upper bound  $(|X||Y||Z||\Gamma|)^{2/3}$ -- see e.g. \cite[Corollary 5.1]{SV} -- for the number of solutions of the equation 
$ax+y=z$, with $a\in \Gamma$ and $x\in X,\,y\in Y,\,z\in Z$, with the restriction in $\F_p$ that the sufficiently small in terms of $p$ sets $X,Y,Z$ be $\Gamma$-invariant -- that is, say $X\Gamma=X$ -- and extra polynomial dependence in $M$ in the real case. 
For some recent work along the same lines on multiplicative subgroups in $\F_p$ see, e.g.,  \cite{Malykhin_p^2}, \cite{S_tripling}, \cite{SSV} and the references contained therein.

\renewcommand{\thetheorem}{\arabic{theorem}}

\medskip
Applications of the Szemer\'edi-Trotter theorem to sum-product type problems were started by Elekes \cite{E}, who proved the textbook inequality
 \begin{equation}\label{el}
|A\pm A|^2|AA|^2 \gg |A|^{5},
\end{equation}
which established the threshold FPMS inequality $|A\pm A|\gg_M |A|^{3/2}$.

More recently the third author \cite{S_AA} established the inequality\
\begin{equation}\label{sh}
|A-A|^6|AA|^{13}\gtrsim |A|^{23}.
\end{equation}
(In the statement  of the corresponding \cite[Theorem 1]{S_AA} one has the ratio set $A/A$, but after some inspection of the proof can be extended to embrace $AA$ as well. ) Inequality \eqref{sh}
sets the world record $|A- A|\gtrsim_M |A|^{5/3}$, which we believe is unlikely to be beaten within the current state of the art. Similarly the inequality $|\Gamma \pm \Gamma |\gg |\Gamma|^{3/2}$ for a multiplicative subgroup $\Gamma\subset \F_p$, with $|\Gamma|\leq p^{2/3}$, was established by Heath-Brown and Konyagin \cite{H-K} and improved to 
$|\Gamma- \Gamma |\gtrsim |\Gamma|^{5/3}$ for $|\Gamma|\leq \sqrt{p}$ in \cite{SV}. 


On the other hand, back to the real case, in the sense of the original question \eqref{ers}, that is when cardinalities $|AA|$ and $|A-A|$ are roughly the same, inequality \eqref{sh} is weaker that \eqref{el}.

The first inequality in the statement of the following theorem strengthens the inequality $\eqref{sh}$ (modulo the power of $\log|A|$) so that it matches the Elekes one, involving the difference set in the case of similar cardinalities. 



\begin{theorem} \label{thm:1} For a real set $A$ \begin{equation}\label{main} 
|A-A|^3|AA|^{5}\gg \frac{ |A|^{10} }{\log^{1/2} |A|}. \end{equation}
\end{theorem}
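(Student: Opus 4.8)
The plan is to prove $|A-A|^3|AA|^5 \gg |A|^{10}/\log^{1/2}|A|$ via a weighted incidence argument, exploiting the shift-invariance of the difference set together with a dyadic pigeonholing on the number-of-realisations function $r_{A-A}$. Write $D = A-A$. I would start from the collinear-triples or Elekes-type setup: consider the point set $P = (A-A) \times (A-A) \subset \R^2$, or better yet the point set $P = A \times (AA)$ paired with lines $\ell_{a,b}$ of the form $y = a(x - b)$ indexed by $(a,b)\in A\times A$. Each such line passes through the points $\{(b', ab'-ab) : b'\in A\}$... but to bring in the difference set one instead records, for fixed difference $x = b-b'$, that the quantity $a x = ab - ab'$ lies in $AA - AA$; the cleaner route is to use the ``third moment'' / collinear quadruples formulation underlying \eqref{sh} and redistribute.

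Concretely, the key steps are as follows. First, expand a carefully chosen quantity — the natural candidate is the number $\sigma$ of solutions to $a_1(b_1 - b_2) = a_2(b_3 - b_4)$ with all variables in $A$ — both from above and below. From below, Cauchy–Schwarz against the trivial diagonal contribution gives $\sigma \gg |A|^6/|D(A)|$ where $D(A) := \{a(b-b') : a,b,b'\in A\}$, and one controls $|D(A)|$ by $|A-A||AA|$ via the elementary covering $D(A) \subseteq AA - AA$ is too lossy, so instead use $|D(A)| \le |A||A-A|$ after fixing $a$... the sharper bound is that $D(A)$ has size at most roughly $|AA|\,|A-A|/|A|$ by a Plünnecke–Ruzsa / multiplicative-energy argument, or one simply keeps $D(A)$ as an auxiliary quantity. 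Second, bound $\sigma$ from above by Szemerédi–Trotter: the solutions correspond to incidences between the $\approx |A||A-A|$ points $\{(a, a\delta) : a\in A, \delta \in A-A\}$ (weighted by $r_{A-A}(\delta)$) and the $|A|^2$ lines through the origin-translates; dyadically decompose the points according to the size of $r_{A-A}(\delta)$ and apply \eqref{strich} or the incidence bound on each dyadic block, summing up and losing only the advertised $\log^{1/2}|A|$. Third, combine the upper and lower bounds on $\sigma$ and optimise.

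The main obstacle I expect is the bookkeeping of the weights $r_{A-A}(\delta)$ in the incidence count: a naive application of Szemerédi–Trotter to the un-weighted point set $A \times (A-A)$ gives only the Elekes exponent, and the gain to exponents $(3,5)$ must come from the fact that $\sum_\delta r_{A-A}(\delta) = |A|^2$ is much larger than $|A-A|$, i.e. from a second-moment (energy) input on $A-A$ being fed back in. Getting the exponents to balance to exactly $|A|^{10}$ with the $(3,5)$ split — rather than, say, $(4,4)$ or $(2,6)$ — requires choosing the right auxiliary equation and the right threshold for the dyadic decomposition, and this is where the $\log^{1/2}$ (rather than $\log$ or $\log^2$) savings is delicate: one wants to pigeonhole on only one dyadic parameter and handle the popular/unpopular ranges of $r_{A-A}$ separately, absorbing the tail trivially. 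A secondary technical point is controlling $|D(A)| = |\{a(b-c)\}|$; here shift-invariance of $A-A$ helps, since $a(b-c) = a b - a c$ and one can use the Szemerédi–Trotter bound on the multiplicative energy side to show $|D(A)| \lesssim |AA|\cdot|A-A| / |A| \cdot (\text{something})$, but if that bound is too weak one keeps $D(A)$ explicit and notes $|D(A)| \le |AA||A-A|$ trivially, which already suffices for the stated exponents after the optimisation.
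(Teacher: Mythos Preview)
Your proposal has a genuine gap, and the approach is quite different from the paper's.

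The equation you propose to count, $a_1(b_1-b_2)=a_2(b_3-b_4)$, does not naturally give a Szemer\'edi--Trotter setup: after writing $\delta_i\in A-A$, the condition $a_1\delta_1=a_2\delta_2$ puts $(a_1,a_2)$ on a line through the origin, and for a pencil of concurrent lines Theorem~\ref{SzT} gives nothing. Your suggested point set $\{(a,a\delta)\}$ turns the equation into ``same $y$-coordinate'', which is again not an incidence problem. Separately, the bound $|A(A-A)|\le |AA|\,|A-A|$ you call ``trivial'' is not: one only has $A(A-A)\subseteq AA-AA$, and $|AA-AA|$ is not controlled by $|AA|\,|A-A|$ in general. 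So neither the upper nor the lower side of your plan is in place.

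The paper's argument is structurally different and hinges on two ingredients you do not invoke. First, the \emph{cubic energy} bound $\E_3(A)\ll M^2|A|^3\log|A|$ (Lemma~\ref{l:E_3}): via Cauchy--Schwarz (Lemma~\ref{l:key}) this gives a \emph{lower} bound
\[
N:=\bigl|\{(d,d',d'')\in (A-A)\times P\times(A-A):\ d''=d-d'\}\bigr|\;\ge\;\frac{|A|^6}{4\,\E_3(A)}\;\gg\;\frac{|A|^3}{M^2\log|A|},
\]
where $P=\{d:r_{A-A}(d)\ge\Delta\}$ with $\Delta=|A|^2/(2|A-A|)$. Second, the \emph{multiplicative dilation trick}: if $d'\in P$ has $\ge\Delta$ representations as $b-a$, then for every $x\in A$ the product $xd'=xb-xa$ has $\ge\Delta$ representations in $AA-AA$, hence $xd'$ lies in the small set $S=\{s:r_{AA-AA}(s)\ge\Delta\}$ with $|S|\le |AA|^2/\Delta$. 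Averaging over $x\in A$ converts the equation $d''=d-d'$ into $d''=d-s/x$, a genuine point--line incidence problem with $|A|\,|A-A|$ points and $|S|\,|A-A|$ lines; Theorem~\ref{SzT} then gives the matching \emph{upper} bound on $N$. Comparing the two and substituting $\Delta$ yields \eqref{main}. No dyadic decomposition on $r_{A-A}$ is needed; the single popularity threshold $\Delta$ suffices, and the only logarithm comes from $\E_3(A)$.
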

We remark that in estimates \eqref{main} one can replace $AA$ with $A/A$. The proof also applies, with $AA=A$, to the case when $A$ is replaced by a multiplicative subgroup $\Gamma\subset \F_p^\times$, with $|\Gamma|\leq \sqrt{p}$. In the latter case the inequality is due to the third author and Vyugin \cite{SV}.

\medskip
The next theorem is an $L^2$ estimate. 

\begin{theorem}
	Let $A\subset \R$ and $|AA| \le M|A|$. 
	Then
	\begin{equation}\label{f:new_energy}
	\E(A) \ll M^{8/5} |A|^{49/20} 
	 \log^{1/5}|A|\,.
	\end{equation}
	The same estimate, with $M=1$ holds for a multiplicative subgroup $\Gamma\subset \F_p^\times$,  with $|\Gamma|\leq\sqrt{p}$.\label{thm:new_energy}
\end{theorem}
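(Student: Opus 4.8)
The plan is to bound the additive energy $\E(A)=\sum_x r^2_{A-A}(x)$ by combining a Szemer\'edi--Trotter incidence count with a dyadic pigeonholing over level sets of $r_{A-A}$, and then to feed in the structural Lemma \ref{l:brl} (advertised in the introduction) to control the main combinatorial parameter coming from the small multiplicative doubling. Concretely, I would write $\E(A)=\sum_{j}2^{2j}|P_j|$ where $P_j=\{x:\,2^j\le r_{A-A}(x)<2^{j+1}\}$, and for each popular level $j$ estimate $|P_j|$ by viewing the additive quadruples with differences in $P_j$ as incidences between the point set $A\times A$ (a Cartesian product, so the clean Solymosi--Tardos form of \eqref{strich} applies) and a family of lines of the form $y=x+t$, $t\in P_j$, or more precisely the lines dual to the dilated equations $ax=y+z$ that encode the interaction of $A-A$ with the multiplicative structure of $AA$. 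The multiplicative hypothesis $|AA|\le M|A|$ enters exactly as in the derivation of \eqref{sh}: ratios $a/a'$ with $a,a'\in A$ live in a set of size $\le M^2|A|$, so a typical dilate $\lambda(A-A)$ with $\lambda\in A/A$ is forced to overlap heavily, and counting these overlaps is where Lemma \ref{l:brl} buys the improvement over the trivial $\E(A)\lesssim|A|^{5/2}$.

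The key steps, in order, would be: (i) set up the dyadic decomposition and reduce to bounding, for a fixed popular level $j$ with $|P_j|=P$ and $2^j=\Delta$ so that $\Delta^2 P\lesssim\E(A)$ and $\Delta P\lesssim|A|^2$, the number of incidences/collinear triples encoding solutions of $a_1-a_2=a_3-a_4$ with the difference in $P_j$; (ii) apply Szemer\'edi--Trotter (or Theorem \ref{t:mitkin} in the subgroup case) to the Cartesian product point set and the line family indexed by $A/A$ — this is the step that produces the exponent $2/3$ and hence, after optimisation, exponents with denominator $20$; (iii) invoke Lemma \ref{l:brl} to relate the number of ``rich'' dilates to $\E(A)$ itself, creating a self-improving inequality of the shape $\E(A)^{c}\lesssim_M|A|^{d}\,\E(A)^{c'}$ with $c>c'$, which one solves for $\E(A)$; (iv) collect the powers of $M$ and of $\log|A|$ (one factor of $\log$ from the dyadic sum, contributing the $\log^{1/5}$ after the root is taken) and check that the extremal balance gives precisely $\E(A)\ll M^{8/5}|A|^{49/20}\log^{1/5}|A|$. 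For the $\F_p$ statement one repeats the argument verbatim with $AA=\Gamma$, $M=1$, using Theorem \ref{t:mitkin} in place of Theorem \ref{SzT} and the constraint $|\Gamma|\le\sqrt p$ to ensure the $\Gamma$-invariant sets stay in the range where \eqref{eq_t:mitkin} is valid.

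The main obstacle, I expect, is step (iii): extracting from Lemma \ref{l:brl} the \emph{right} coupling between the two combinatorial characteristics of $A$ — roughly, the number of collinear triples in $A\times A$ on lines through the origin (governed by multiplicative energy, i.e.\ by $M$) versus the additive third-moment $\sum_x r^3_{A-A}(x)$ — so that the incidence bound from step (ii) closes into a genuine improvement rather than merely reproducing $5/2$. One has to be careful that the level-set parameter $P$ over which we pigeonhole is the \emph{same} one that appears in the hypothesis of Lemma \ref{l:brl}, and that the inevitable loss when passing between $\E(A)$, the cube moment, and $|A\pm A|$ does not eat the gain; tracking this is also where the precise power $M^{8/5}$ (as opposed to a larger power of $M$) is won or lost, which is the point the introduction flags as the source of the ``best believed'' $M$-dependence and the reason that improving the analogous estimate \eqref{Sig} would push the sum-set exponent below $8/5$.
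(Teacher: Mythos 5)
You correctly identify Lemma \ref{l:brl} and the resulting bound $\T_3(A)\lesssim_M|A|^4\log|A|$ as the new structural input, and you also rightly anticipate the dyadic pigeonholing, the Szemer\'edi--Trotter/Mit'kin incidence estimates, and the parallel $\F_p$ argument. But the route you sketch — decompose $\E(A)$ by level sets, count incidences, feed in $\T_3$, and close a ``self-improving'' inequality — cannot by itself reach the exponent $49/20$. The only elementary coupling between $\T_3$ and $\E$ is the Cauchy--Schwarz bound $\E(A)^2\le|A|\,\T_3(A)$ (valid over any abelian group), and from $\T_3(A)\lesssim_M|A|^4\log|A|$ this recovers only the threshold $\E(A)\lesssim_M|A|^{5/2}$. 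Pigeonholing over level sets of $r_{A-A}$ on top of this does not improve the exponent, and you flag exactly this difficulty at your step (iii) without resolving it.

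What is missing from your proposal is the eigenvalue/trace inequality of Lemma \ref{l:spectral}, which is the engine of the paper's proof. For a cutoff $\Delta$, writing $\E'=\sum_{x:\,r_{A-A}(x)\le\Delta}r^2_{A-A}(x)$, one has $\E'^6\le|A|^6\,\E_3(A)\,\Delta^2\,\Sigma$ where $\Sigma=\sum_{d,d'}r_{A-A}(d)\,r_{A-A}(d')\,r^2_{A-A}(d-d')$; this follows from evaluating $\tr(\mathfrak{M}^2\mathfrak{R})$ in two bases for the $|A|\times|A|$ matrices $\mathfrak{M}_{ab}=\sqrt{r_{A-A}(a-b)}$ (truncated at $\Delta$) and $\mathfrak{R}_{ab}=r_{A-A}(a-b)$, with positivity and Perron--Frobenius controlling the principal eigenvalue from below. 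Nothing in your outline supplies this sixth-power gain. Lemma \ref{l:brl} and the dyadic decomposition then enter only to bound $\Sigma$: split $\Sigma=\Sigma'+\Sigma''$ at a second cutoff $\tau$, bound $\Sigma'\le\tau\,\T_3(A)\lesssim_M\tau|A|^4$ via Lemma \ref{l:brl}, bound $\Sigma''$ by a dyadic sum combining $|P''_j|\ll M^2|A|^3/\tau_j^3$ with $\E(A,P''_j)\lesssim_M|A|\,|P''_j|^{3/2}$ and again $\T_3(A)$, and balance at $\tau\sim_M|A|^{3/5}$ to get $\Sigma\lesssim_M|A|^{23/5}$. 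Only then does the choice $\Delta\sim_M|A|^{11/20}$, together with $\E_3(A)\lesssim_M|A|^3$, yield $\E(A)^6\lesssim_M|A|^{147/10}$, i.e.\ $\E(A)\lesssim_M|A|^{49/20}$. So the apparatus you describe is compatible with the actual proof, but the spectral lemma is the indispensable step you have not supplied.
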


In Section \ref{sec:F_p}  we develop some applications of Theorem \ref{thm:new_energy} to multiplicative subgroups in $\F_p^\times$, the main result being Theorem \ref{gap} in Section \ref{subsec:distance}.

In the final Section \ref{subsec:sums} we address sum set bounds. Theorem  \ref{thm:sums} therein establishes relatively minor improvements to known estimates; one reason why we chose to include that section is to present Lemma \ref{lemma:rect}, which can be interesting on its own accord, for until recently arguments of this type were drawn using the quantitatively costly Balog-Szemer\'edi-Gowers theorem.

\medskip
Developing efficient applications of the Szemer\'edi-Trotter theorem together with arithmetic/analytic combinatorics lemmata  has been the subject of much recent work: see, e.g. \cite{SS1},  \cite{S_ineq}, \cite{S_AA}, \cite{RSS}, \cite{SZ}, \cite{VarII} (and the references therein). 
Theorem \ref{thm:new_energy} improves the previously best known bound $\E(A) \lesssim_M |A|^{32/13}$ in  \cite[Theorem 8]{S_ineq} and  \cite[Theorem 5.4]{s_mixed}.

In conclusion of this section we remark that all the known proofs of the Szemer\'edi-Trotter theorem strongly rely on order properties of reals, and despite recent progress in incidence theory over general fields (see \cite{misha}, \cite{SdZ}) the versions of the  Szemer\'edi-Trotter theorem which apply there are weaker than Theorem \ref{SzT}. On a somewhat pessimistic note, it appears extremely unlikely that the weak Erd\H os-Szemer\'edi conjecture can be resolved over the reals without a novel insight that we currently do not possess. In particular, inequality \eqref{main}, its proof being simple as it is, appears to be the best one can hope for within today's scope of ideas.

\section{The cubic energy: basic lemmata}
In this short section we re-introduce the concept of cubic energy $\E_3(A)$, namely the third moment of the number-of-realisations function $r_{A-A}$. Generally, for $q>1$ we define
\begin{equation}
\E_q(A) := \sum_d r^q_{A-A}(d),
\label{e3def}
\end{equation}
omitting the subscript for $q=2$, where we also write $A_d=A\cap (A+d)$, as well as 
$$
\E(A,B) = \sum_d |A\cap (B+d)|^2.
$$
Returning to \eqref{e3def} we are  especially interested in the case $q=3$.

Notation-wise, if the domain of the summation index is not specified, this means the whole universe $\R$ or $\F_p$.

Geometrically $\E_3(A)$ is the number of collinear triples of points in the Cartesian product $A\times A\subset \R^2$ on unit slope  lines $y=x+d$. By looking at the projections of such a collinear point triple on the coordinate exes, the same quantity can be re-counted as
\begin{equation}\label{e3}
\E_3(A) = \sum_{d,d'} |A\cap (A+d)\cap (A+d') |^2 = \sum_{d'} \E(A,A_d) \,.
\end{equation}

I.e., triples of elements $(a,b,c)\in A\times A\times A$ get partitioned into equivalence classes by translation, a single class being identified by differences $d=b-a$, and $d'=c-a$. Two latter triples of elements of $A$ are equivalent if and only if they differ by translation. The quantity $\E_3(A)$ is the sum, over all equivalence classes, of squares of their population. (The same concerns the energy $\E(A)$, which pertains to equivalent by translation pairs, rather than triples, of elements of $A$.)

Note that since $b-c = d-d'$ is also a member of $A-A$, it follows by the Cauchy-Schwarz inequality that 
\begin{equation}\label{lower}
|\{(d,d',d'')\in (A-A)^3: \, d''=d-d'\}|\geq \frac{|A|^6}{\E_3(A)}.
\end{equation}

An application of Theorem \ref{SzT} or Theorem \ref{t:mitkin} (see, e.g., \cite[Lemma 7]{KS-})
yields a near-optimal estimate for $\E_3(A)$ with $|AA|=M|A|$ or $A=\Gamma$, quoted as part of the following lemma.

\begin{lemma}
	Suppose $|AA| \mbox{ or } |A/A|  \, = \, M|A|.$ Then for any $A'\subseteq A$, and any $B$, one has bounds
	\begin{equation}\label{b:1}\E_3(A') \ll M^2|A'|^2|A|\log|A|,\qquad \E(A,B)\ll M |A||B|^{3/2}.\end{equation}
	
	If $\Gamma$ is a multiplicative subgroup in $\F^\times_p$, with size $O(\sqrt{p})$, and $B$ a $\Gamma$-invariant set 
	$B$, then 
	\begin{equation}\label{b:2}\E_3(\Gamma) \ll |\Gamma|^3\log|\Gamma|,\qquad  \E(\Gamma,B)\ll |\Gamma||B|^{3/2}.\end{equation}
	\label{l:E_3}
	
	Bestides, for $\Delta\geq 1$
	\begin{equation}\label{b:31}
	\sum_{x:\,r_{A-A}(x)>\Delta } 1\ll \frac{ M^2|A|^3}{\Delta^3}, \qquad \sum_{x:\,r_{\Gamma-\Gamma}(x)>\Delta } 1\ll \frac{|\Gamma|^3}{\Delta^3},
	\end{equation}
	\begin{equation}\label{b:3}
	\sum_{x:\,r_{A-A}(x)>\Delta } r^2_{A-A}(x) \ll \frac{ M^2|A|^3}{\Delta}, \qquad \sum_{x:\,r_{\Gamma-\Gamma}(x)>\Delta } r^2_{\Gamma-\Gamma}(x) \ll \frac{|\Gamma|^3}{\Delta}.
	\end{equation}
\end{lemma}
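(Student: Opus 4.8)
The plan is to deduce every inequality of the lemma from a single ``one-scale'' incidence bound, which in turn is a textbook application of Theorem~\ref{SzT} (and, in the subgroup case, of Theorem~\ref{t:mitkin}); cf.\ \cite[Lemma~7]{KS-}. Concretely, I would first prove that, under $|AA|=M|A|$ (or $|A/A|=M|A|$), for every $A'\subseteq A$, every $B$, and every $\tau\ge 2$,
\begin{gather*}
|\{d:\ r_{A'-A'}(d)>\tau\}|\ \ll\ \frac{M^2|A||A'|^2}{\tau^3},\\
|\{d:\ |A\cap(B+d)|>\tau\}|\ \ll\ \frac{M^2|A||B|^2}{\tau^3}+\frac{M|B|}{\tau}.
\end{gather*}
For the first bound, set $D=\{d:\ r_{A'-A'}(d)>\tau\}$ and, for $c\in A$ and $d\in D$, take the line $\ell_{c,d}=\{(x,y):\ x=c(y+d)\}$ (replace $c(y+d)$ by $(y+d)/c$ in the ratio case). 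Whenever $(a,b)\in A'\times A'$ with $a-b=d$ we get the point $(ca,b)\in AA\times A'$ lying on $\ell_{c,d}$, so each of the $|A|\cdot|D|$ lines — pairwise distinct, since a line recovers $(c,cd)$, hence $(c,d)$ — is incident to more than $\tau$ points of the set $\mathcal P=AA\times A'$, which has $M|A||A'|$ points. Theorem~\ref{SzT} gives $|A|\cdot|D|\ll(M|A||A'|)^2\tau^{-3}+M|A||A'|\tau^{-1}$, and since $\tau\le|A'|\le(M|A||A'|)^{1/2}$ the first term wins; dividing by $|A|$ gives the claim. (Using the full $A$ for the dilation $c$, but letting the points live in $AA\times A'$ rather than $AA\times A$, is what produces the factor $|A'|^2|A|$ instead of $|A|^3$.) The second bound is the same computation with the point set $AA\times B$ and $d$ running over the super-level set of $|A\cap(B+d)|$, this time keeping both Szemer\'edi--Trotter terms; for $\tau<2$ all bounds are trivial.

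Granting these one-scale estimates, the rest is dyadic pigeonholing plus the trivial first-moment identities $\sum_d r_{A-A}(d)=|A|^2$ and $\sum_d|A\cap(B+d)|=|A||B|$. The bound \eqref{b:31} is the first estimate above with $A'=A$ (with the finitely many small $\Delta$ and $A$ dispatched trivially). For \eqref{b:1}, writing $\E_3(A')=\sum_j\sum_{2^j\le r_{A'-A'}(d)<2^{j+1}}r_{A'-A'}^3(d)$ bounds each of the $O(\log|A|)$ scales by $2^{3j}|\{d:\ r_{A'-A'}(d)>2^{j-1}\}|\ll M^2|A||A'|^2$, which sums to $M^2|A'|^2|A|\log|A|$; the logarithm here is genuine and comes only from the number of scales. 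For \eqref{b:3} one combines the first estimate with $|\{d:\ r_{A-A}(d)>\tau\}|\le|A|^2/\tau$: the $j$-th level with $2^j>\Delta$ contributes $\ll\min(2^j|A|^2,\,M^2|A|^3/2^j)$, two geometric series meeting at $2^j\sim M|A|^{1/2}$, hence total $\ll M^2|A|^3/\Delta$ with no logarithm. The bound $\E(A,B)\ll M|A||B|^{3/2}$ of \eqref{b:1} is obtained identically from the second estimate together with $|\{d:\ |A\cap(B+d)|>\tau\}|\le|A||B|/\tau$, the two geometric series now meeting at $\tau\sim M|B|^{1/2}$.

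The subgroup statements \eqref{b:2} and the right-hand inequalities in \eqref{b:31}, \eqref{b:3} follow by repeating the above with Theorem~\ref{t:mitkin} in place of Theorem~\ref{SzT} (cf.\ \cite{SV}). Here one uses that the relevant level sets are themselves $\Gamma$-invariant, $r_{\Gamma-\Gamma}(\gamma d)=r_{\Gamma-\Gamma}(d)$ for $\gamma\in\Gamma$, and that $\Gamma\Gamma=\Gamma/\Gamma=\Gamma$, so that after clearing denominators the incidence count above turns into a count of solutions of $ux+vy=1$ with $x,y\in\Gamma$ and $(u,v)$ in a subset $\Theta$ of $\F^\times_p/\Gamma\times\F^\times_p/\Gamma$: for $D=\{d\neq 0:\ r_{\Gamma-\Gamma}(d)>\tau\}$ take $\Theta=\{(\overline{1/d},\overline{-1/d}):\ d\in D\}$, of size $|D|/|\Gamma|$, each of whose terms is $\ge\tau$ (witnessed by the solutions $(x,y)=(b+d,b)$ with $b,b+d\in\Gamma$). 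Theorem~\ref{t:mitkin} then yields $(|D|/|\Gamma|)\tau\ll(|\Gamma|\cdot|\Theta|)^{2/3}=|D|^{2/3}$, i.e.\ $|D|\ll|\Gamma|^3/\tau^3$; one checks its hypotheses $|\Gamma|^4|\Theta|<p^3$ and $|\Theta|\le 33^{-3}|\Gamma|^2$ do hold once $|\Gamma|=O(\sqrt p)$, treating $|\Gamma|<33^3$ and the $d=0$ term (which adds $|\Gamma|^3$ to $\E_3(\Gamma)$) separately. The dyadic summations then go through verbatim, and for $\Gamma$-invariant $B$ the same device, splitting $B$ into $\Gamma$-cosets, casts $\E(\Gamma,B)$ as a Theorem~\ref{t:mitkin} sum, giving $\E(\Gamma,B)\ll|\Gamma||B|^{3/2}$.

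I expect the only real work to be the bookkeeping rather than any new idea: in the real case one must check that the secondary Szemer\'edi--Trotter terms are genuinely lower order (this is exactly where $M\le|A|$ and $\tau\le|A'|$ are used) and that every dyadic sum is geometric rather than logarithmic, the sole logarithm, in $\E_3$, being an artifact of the number of scales; in the $\F_p$ case the delicate point is verifying the smallness hypotheses of Theorem~\ref{t:mitkin} for the specific $\Theta$ that occurs and isolating the $d=0$ contribution. Once these are in place the subgroup bounds are formally the $M=1$ instances of the real ones.
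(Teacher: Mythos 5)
Your proof is correct and takes essentially the same route as the paper: an Elekes-style application of Theorem~\ref{SzT} (respectively Theorem~\ref{t:mitkin} in the subgroup case) to the dilated configuration in $AA\times A'$ gives the level-set bound, and dyadic summation then yields all four displayed estimates, with the single logarithm appearing only in $\E_3$ from the number of scales. Your write-up is more explicit — isolating the one-scale level-set estimate as a separate step and verifying the hypotheses of Theorem~\ref{t:mitkin} directly — but the underlying geometry (writing $a=p/x$ to trade $A'$ for $AA$) and the pigeonholing are the same as in the paper's appendix.
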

\begin{remark} In fact, as far as the multiplicative subgroup case is concerned, {\em all} the inequalities of Lemma \ref{l:E_3} {\em but for}  the second inequality in \eqref{b:2} are valid for $|\Gamma|\leq p^{2/3}$. See, e.g., \cite[Lemma 4]{Ilmed}. This will be used in the proof of the forthcoming Theorem \ref{t:G_large}, where the second inequality in \eqref{b:2} will be replaced by Lemma \ref{l:M_char'}. \label{extend}\end{remark}

We use Lemma \ref{l:E_3} to immediately obtain the following consequence of estimate \eqref{lower}.
\begin{corollary}\label{cor1}
Under assumptions of Lemma \ref{l:E_3},
$$
|\{(d,d',d'')\in (A-A)^3: \, d''=d-d'\}|\gg  \frac{|A|^3}{M^2\log|A|},
$$
and the same for $\Gamma$, with $M=1$.
 \end{corollary}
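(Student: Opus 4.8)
The plan is to combine the lower bound \eqref{lower} with the first $\E_3$-upper bound from Lemma \ref{l:E_3}. Indeed, the quantity
$$
N := |\{(d,d',d'')\in (A-A)^3: \, d''=d-d'\}|
$$
is, by \eqref{lower} applied with $A'=A$, at least $|A|^6/\E_3(A)$. Now I would invoke \eqref{b:1} in the case $A'=A$, which gives $\E_3(A)\ll M^2|A|^3\log|A|$. Substituting this into the previous inequality yields
$$
N \geq \frac{|A|^6}{\E_3(A)} \gg \frac{|A|^6}{M^2|A|^3\log|A|} = \frac{|A|^3}{M^2\log|A|},
$$
which is precisely the claimed bound.

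For the multiplicative subgroup case I would argue identically, but now using \eqref{lower} with $A$ replaced by $\Gamma$ (the derivation of \eqref{lower} via Cauchy-Schwarz is purely combinatorial and did not use any field-specific property, only that $d-d'\in \Gamma-\Gamma$ whenever $d,d'\in \Gamma-\Gamma$, which holds trivially), together with the first inequality of \eqref{b:2}, namely $\E_3(\Gamma)\ll |\Gamma|^3\log|\Gamma|$. This gives
$$
|\{(d,d',d'')\in (\Gamma-\Gamma)^3: \, d''=d-d'\}| \geq \frac{|\Gamma|^6}{\E_3(\Gamma)} \gg \frac{|\Gamma|^3}{\log|\Gamma|},
$$
i.e.\ the same estimate with $M=1$, as claimed.

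There is essentially no obstacle here: the corollary is a one-line consequence of two facts already recorded in the excerpt, and the only thing to check is that the exponents balance, which they do ($6-3=3$). The mild subtlety worth a word is that \eqref{lower} as stated is phrased for $A$, so one should note that the identical Cauchy-Schwarz argument applies verbatim to $\Gamma$; since $\Gamma$ is closed under taking differences only in the weak sense that $\Gamma-\Gamma$ is closed under the relevant subtraction (which is all that was used), no extra hypothesis beyond those of Lemma \ref{l:E_3} is needed.
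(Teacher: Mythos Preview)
Your proposal is correct and is exactly the paper's approach: the corollary is stated as an immediate consequence of \eqref{lower} combined with the $\E_3$-bound of Lemma \ref{l:E_3}, and you have carried this out verbatim (the paper does not even write out a separate proof). The only minor slip is notational: \eqref{lower} has no parameter $A'$, so ``applied with $A'=A$'' is superfluous, but this does not affect the argument.
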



\section{Proof of Theorem  \ref{thm:1}} 

Let us first prove the inequality \eqref{main} under an additional easy-to-remove assumption. Define the set of popular differences as
\begin{equation}\label{pop}
P:= \left\{ d\in A-A:\,r_{A-A}(d)  \geq \left(\Delta:= \frac{|A|^2}{2|A-A|}\right)\right\}.
\end{equation}
By the pigeonhole principle 
\begin{equation} 
\sum_{d'\in P} r_{A-A}(d') \geq \frac{1}{2}|A|^2.
\label{pig}\end{equation}

\begin{proposition} \label{cond} Suppose the bound of Corollary \ref{cor1} applies to the equation  $d''=d-d'$, with 
$d,d''\in A-A$ and $d'\in P$. Then \eqref{main} follows.
\end{proposition}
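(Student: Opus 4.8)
The goal of Proposition~\ref{cond} is to upgrade the hypothesis that there are $\gg |A|^3/(M^2\log|A|)$ triples $(d,d',d'')\in(A-A)\times P\times(A-A)$ with $d''=d-d'$ into the inequality \eqref{main}. The natural route is a point–line incidence count via Theorem~\ref{SzT}, with the point set a Cartesian product so that the Solymosi--Tardos form applies. First I would fix a popular difference $d'\in P$ and recall that, by definition of $P$ in \eqref{pop}, we have $|A\cap(A+d')|=r_{A-A}(d')\ge\Delta=|A|^2/(2|A-A|)$. The equation $d''=d-d'$ with $d,d''\in A-A$ says that $d'$ is representable as a difference of two elements of $A-A$; more usefully, for each such $d'$ one wants to encode the many ways this happens in terms of incidences. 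Writing $d=a-b$, $d''=a'-b'$ with $a,b,a',b'\in A$ and $a-b-(a'-b')=d'$, i.e. $(a-a')-(b-b')=d'$, the triple is pinned down by a pair $(a-a',b-b')$ of elements of $A-A$ differing by $d'$.

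The key step is to set up the incidence problem so that the large multiplicative structure enters. The standard device (as in Elekes's proof of \eqref{el} and its refinements) is to consider the point set $(A-A)\times(AA)$ — or a suitable variant — and a family of lines indexed by elements of $A$, of the form $\{(x,y): y = s(x - t)\}$ or an affine version, chosen so that a solution of $d''=d-d'$ contributes an incidence. Concretely, I expect to count incidences between the $|A-A|\cdot|AA|$ points of $(A-A)\times(AA)$ and roughly $|A|^2$ lines, each line carrying $\gg\Delta$ points coming from a fixed $d'\in P$ and a fixed element of $A$ used for rescaling; summing $r_{A-A}(d')$ over $d'\in P$ and using \eqref{pig} one gets a total incidence count that is $\gtrsim |A|^2\cdot|A|^2 = |A|^4$ on the lower side (or, after weighting by the Corollary~\ref{cor1} bound, $\gtrsim |A|^3/(M^2\log|A|)$ lines each rich), while Theorem~\ref{SzT} caps it by $O\big((|A-A|\,|AA|)^{2/3}(|A|^2)^{2/3} + |A-A|\,|AA| + |A|^2\big)$. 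Comparing the two bounds and discarding the lower-order terms yields $|A|^{?}\lesssim (|A-A|\,|AA|)^{2/3}|A|^{4/3}$, which after clearing exponents and tracking the $\log^{1/2}$ from the Corollary should rearrange to $|A-A|^3|AA|^5\gg |A|^{10}/\log^{1/2}|A|$. (With $AA$ replaced by $A/A$ throughout, the same rescaling lines argument goes through, giving the remarked variant.)

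The main obstacle I anticipate is bookkeeping the exponents so that the combination is exactly $(3,5)$ against $10$: one has to choose the two axes of the Cartesian point set and the parametrisation of the lines so that \emph{one} factor of $|AA|$ (or $|A/A|$) appears in the point count and the remaining powers are distributed to make the Szemer\'edi--Trotter main term balance against the $|A|^{4}$-type lower bound; a naive setup tends to produce $|A\pm A|^2|AA|^2\gg|A|^5$ (i.e. Elekes again) rather than the asymmetric \eqref{main}. The gain must come from feeding in the Corollary~\ref{cor1} bound — equivalently the $\E_3$ estimate \eqref{b:1} — which is what lets us replace a trivial count of line-point incidences by the near-optimal one; so the delicate point is to arrange that the $M^2$ and the $\log|A|$ from \eqref{b:1}/Corollary~\ref{cor1} enter only to the first power on the right of \eqref{main}, which dictates $M^2$ paired with $|AA|^5 = M^5|A|^5$ and hence the exponent $5$. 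I would also need to be careful that the hypothesis of the proposition — that the Corollary bound applies to the \emph{restricted} equation with $d'\in P$ — is used precisely where the weighting by popular differences via \eqref{pig} is invoked, and that the degenerate lines (slopes $0$ or $\infty$, or the diagonal $d=d''$) contribute only lower-order terms that are absorbed. Once the incidence inequality is in place, the final rearrangement is routine algebra.
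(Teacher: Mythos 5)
There is a genuine gap: you have the right general framework (Szemer\'edi--Trotter on a Cartesian point set, with the Corollary~\ref{cor1} count as the lower bound), but you are missing the specific combinatorial device that makes the exponents come out to $(3,5)$, and your proposed point set is the wrong one. The paper's proof does \emph{not} use $(A-A)\times AA$ as the point set with $|A|^2$ lines. Instead it introduces, for each $x\in A$, the rewriting $d'=xd'/x$, and observes that if $d'\in P$ then $s:=xd'$ is automatically $\Delta$-popular in $AA-AA$: since $d'=b-a$ has at least $\Delta$ representations with $a,b\in A$, the element $xd'=xb-xa$ has at least $\Delta$ representations as a difference of two elements of $AA$. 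This places $s$ in the set $S:=\{s\in AA-AA:\ r_{AA-AA}(s)\ge\Delta\}$, which by trivial double counting satisfies $|S|\le (M|A|)^2/\Delta$. The equation $d''=d-d'$ then becomes $d''=d-s/x$ with an extra free variable $x\in A$, and the count $|\{(d,d',d'')\in(A-A)\times P\times(A-A):\ d''=d-d'\}|$ is bounded by $\frac{1}{|A|}$ times the number of incidences between the $n=|A|\,|A-A|$ points of $A^{-1}\times(A-A)$ and the $m=|A-A|\,|S|$ lines $v=d-su$ indexed by $(d,s)\in(A-A)\times S$. It is precisely the bound on $|S|$ in terms of $\Delta$ that injects the saving: the main Szemer\'edi--Trotter term becomes $M^{4/3}|A|^2|A-A|^{4/3}\Delta^{-2/3}$, and substituting $\Delta=|A|^2/(2|A-A|)$ and comparing with $|A|^3/(M^2\log|A|)$ gives \eqref{main}.

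Your plan never isolates this $S$-trick, which is the whole point of the proposition; you yourself note that without some additional input a naive incidence setup ``tends to produce Elekes again.'' The input you gesture at --- feeding in Corollary~\ref{cor1} or the $\E_3$ bound --- only supplies the \emph{lower} bound on the solution count; it does not, by itself, shrink the line family on the upper-bound side. What actually shrinks the line family is passing from $P\subset A-A$ to $S\subset AA-AA$ via multiplication by $x\in A$, because popularity in $A-A$ forces popularity in $AA-AA$ uniformly over the multiplier. That is the idea the proof pivots on and the one your sketch lacks; without it the bookkeeping you worry about will not close.
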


\begin{proof} This becomes merely an application of the Szemer\'edi-Trotter theorem after for any $x\in A$ one writes
\begin{equation}\label{long}\begin{aligned}
|\{(d,d',d'')\in (A-A)\times P \times(A-A): \, d=d'-d''\}| \qquad  \qquad \qquad  \qquad \qquad  \qquad \\
=  |\{(d,d',d'')\in (A-A)\times P \times(A-A): \, d''= d - xd'/x\}| \qquad  \qquad  \\
\leq \frac{1}{|A|} |\{(d,s,d'',x)\in (A-A)\times S \times(A-A)\times A: \, d''= d-s/x\}|,\;\;
\end{aligned}\end{equation}
where 
\begin{equation}\label{es}
S:=\{s\in AA-AA:\,r_{AA-AA}(s) \geq \Delta\}.
\end{equation}
Indeed, since $d'$ has at least $\Delta$ different representations $d'=b-a$ as a member of $A-A$, then $xd'$ has at least $\Delta$ the representations in the form $xd'=xb-xa$ as the difference of two products from $AA$. It follows that 
$$
|S|\leq \frac{(M |A|)^2}{\Delta}.
$$
Thus the  Szemer\'edi-Trotter theorem yields the upper bound for the number of solutions of the latter equation $d''= d-s/x$,  interpreted as incidences between $n=|A||A-A|$ points and $m=|S||A-A|$ lines, as
\begin{equation}\label{stbd}
O\left( |A|^2M^{4/3} |A-A|^{4/3} \Delta^{-2/3} \right)
\end{equation}
(it's easy to see that if the term $|\mathcal L|=|A-A||S|$ were to dominate we would have a much better estimate than \eqref{main}).

Dividing this by $|A|$ in view of \eqref{long} and comparing this with the lower bound of Corollary \ref{cor1} yields 
$$
\frac{|A|^3}{M^2\log|A|} \ll |A| M^{4/3} |A-A|^{2} |A|^{-4/3},$$ and hence \eqref{main}.

It is easy to see, after trivial modifications of the argument involving $M$ that wherever it appeared above, it could have as well come from $|A/A|=M|A|$. 
$\hfill\Box$\end{proof}

\medskip
All it takes to ensure applicability of Proposition \ref{cond} is the following, essentially trivial lemma. 
\begin{lemma} \label{l:key}
One has the following bound:
$$
\nonumber \frac{|A|^6}{4} \leq \E_3(A) |\{(d,d',d'')\in (A-A) \times P\times(A-A) : \, d''= d-d'\}|.$$
\end{lemma}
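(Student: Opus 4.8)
The plan is to prove Lemma~\ref{l:key} by a weighted Cauchy--Schwarz argument, exploiting the fact that $\E_3(A)$ counts translation-equivalence classes of triples in $A\times A\times A$ and that every difference $d'$ appearing as $d-d''$ can be completed to such a triple in a number of ways that is large precisely when $d'$ is popular. First I would recall the geometric reading from \eqref{e3}: a solution of $d''=d-d'$ with $d,d',d''\in A-A$ records an equivalence class of triples, so the quantity
$$
N:=|\{(d,d',d'')\in (A-A)\times P\times (A-A):\, d''=d-d'\}|
$$
should be compared against $|A|^6$ via the second moment of the relevant realisation function. The key point is to write $|A|^2 \le 2\sum_{d'\in P} r_{A-A}(d')$ from \eqref{pig} and then expand.

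The main step is to set up the right bilinear form. For $d'\in A-A$ write $r_{A-A}(d')=|A\cap(A+d')|$. Then
$$
|\{(a,b,c)\in A^3:\, (b-a)\in P\}| \;=\; \sum_{d'\in P} |A|\cdot r_{A-A}(d')\;\ge\;\frac{|A|^3}{2}
$$
by \eqref{pig}. On the other hand each such triple $(a,b,c)$ is a member of an equivalence class indexed by the pair of differences $(d,d'')=(b-a,c-a)$, wait --- here I would instead index by $d'=b-a\in P$ and $d''=c-a\in A-A$ together with the class size $|A\cap(A+d')\cap(A+d'')|$, and note that two such triples lying in the same class contribute the same pair of differences with $d=d'-d''\in A-A$ wait, I should reconcile signs with the statement's $d''=d-d'$; after fixing the labelling consistently, the count of triples with $b-a\in P$ equals $\sum_{d',d''\,:\,d'\in P}|A\cap(A+d')\cap(A+d'')|$. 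Applying Cauchy--Schwarz to this sum over the $N$ pairs $(d',d'')$ for which the summand is nonzero (these are exactly the pairs counted by $N$, since $d'\in P$, $d''\in A-A$, and nonemptiness of the triple intersection forces the third difference into $A-A$ as well) gives
$$
\Big(\frac{|A|^3}{2}\Big)^2 \;\le\; \Big(\sum_{d',d''} |A\cap(A+d')\cap(A+d'')|\Big)^2 \;\le\; N\cdot \sum_{d',d''}|A\cap(A+d')\cap(A+d'')|^2 \;\le\; N\cdot \E_3(A),
$$
where the last inequality is just \eqref{e3} (dropping the restriction $d'\in P$ only enlarges the sum). This is exactly the claimed bound $|A|^6/4 \le \E_3(A)\cdot N$.

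The part requiring care --- though it is genuinely routine --- is bookkeeping the sign conventions and the indexing so that the set of pairs $(d',d'')$ with nonzero summand coincides with the set counted in the lemma statement, in particular checking that nonemptiness of $A\cap(A+d')\cap(A+d'')$ forces the relation $d=d'-d''$ (or $d''=d-d'$, per the statement) to have $d\in A-A$: if $x, x-d', x-d''$ all lie in $A$ then their pairwise differences $d'$, $d''$, $d'-d''$ are all in $A-A$, which is precisely what lets us restrict the outer count to $(A-A)\times P\times (A-A)$ without loss. No serious obstacle is expected here; the lemma is, as the authors say, essentially trivial, and the only content is the Cauchy--Schwarz step combined with the pigeonhole lower bound \eqref{pig} and the identity \eqref{e3} for $\E_3$.
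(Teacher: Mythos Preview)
Your approach is correct and is exactly the paper's: lower-bound the number of triples $(a,b,c)\in A^3$ with $a-b\in P$ by $|A|^3/2$ via \eqref{pig}, then apply Cauchy--Schwarz against the identity \eqref{e3}. The only slips are bookkeeping --- your index $d''=c-a$ should really be playing the role of $d$ (so that the third difference $d''=d-d'$ is $c-b$), and the pairs with nonzero summand form a \emph{subset} of those enumerated by $N$ rather than being exactly $N$ --- but both are harmless, since $P$ and $A-A$ are symmetric and $N'\le N$ only strengthens the inequality.
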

\begin{proof} The proof is just an application of the pigeonhole principle, often presented as a  graph density argument. Considering $P$ as a bipartite graph on $A\times A$, it has density $\geq 1/2$. Therefore, the number of triples $(a,b,c)$ with $a-b\in P$ is $\geq |A|^3/2$. The claim of the lemma now follows by the Cauchy-Schwarz inequality, just like \eqref{lower} above.
$\hfill\Box$\end{proof}



\section{Proof of Theorem \ref{thm:new_energy} }
\label{subsec:T_3}

The proof of Theorem \ref{thm:new_energy} mainly rests on lemmata \ref{l:brl} and \ref{l:spectral}.

In general,  for $k\ge 2$ an integer, and $A$ a subset of an abelian group, 
let $\T_k (A)$ (such additive characteristics of sets appear throughout additive combinatorics literature) be the quantity
\begin{equation}\label{tkdef}
	\T_k (A) := |\{ (a_1,\dots,a_k,a'_1,\dots,a'_k)\in A^{2k} ~:~ a_1 + \dots +a_k = a'_1 + \dots +a'_k \}| \,.
\end{equation}
Clearly $\T_2 (A) = \E (A)$ is additive energy; we here focus on $\T_3(A)$.
We have also denoted by 
$\mathcal T(A)$ the number of {\it collinear point triples} in the plane set $A\times A$. Note that Theorems \ref{SzT} and \ref{t:mitkin}, applied respectively in the real and $|\Gamma|\leq\sqrt{p}$ multiplicative subgroup $\Gamma\subset \F^\times_p$ settings, ensure that 

\begin{equation} \label{trip} \mathcal T(A)\ll |A|^4\log|A|. \end{equation}.

\medskip

The next lemma evinces a connection between the quantities $\T_3 (A)$ and $\mathcal T(A)$. 

\begin{lemma}\label{l:brl}
	One has the inequality
$$
	\T_3 (A) \le  \frac{|AA/A|}{|A|^2}\min\left(|AA| \sqrt{\mathcal T(A/AA)\cdot \mathcal T(AA)},\;  |A/A|\sqrt{\mathcal T(AA/A)\cdot \mathcal T(A/A)}\right) \,.
$$
\label{l:T_3}
\end{lemma}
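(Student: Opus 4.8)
The plan is to realize $\T_3(A)$ as a weighted count of collinear triples in two different Cartesian products, passing through the ``pivot'' set $AA/A$. First I would rewrite the equation $a_1+a_2+a_3 = a_1'+a_2'+a_3'$ defining $\T_3(A)$ by dividing through by a common element. Concretely, fix the product structure so that $\T_3(A)$ counts $6$-tuples; for each such tuple pick a witness $t$ (say an element of $A$ occurring in the representations) and divide the equation by it, so that the relation becomes an additive identity among elements of $AA/A$ (on one side) and of $AA$ or $A/A$ (on the other). The weighting by representation functions is the source of the factors $|AA|/|A|^2$ and $|A/A|/|A|^2$, together with the $|AA/A|$ coming from the number of choices of the pivot value.

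The key step is the following standard ``tensor-power'' trick for $\T_3$: writing $\T_3(A) = \sum_x r_{A+A+A}(x)^2$, one uses Cauchy--Schwarz to split a triple sum $a_1+a_2+a_3$ as $(a_1+a_2) + a_3$, obtaining
$$
\T_3(A) \le \Big(\sum_{x} r_{A+A}(x)^2 \cdot (\text{something counting } a_3)\Big),
$$
but the more efficient route here is multiplicative: after dividing the additive relation by a suitably chosen element of $A$, the six-variable equation factors through $A/A$ (or $AA$), so that $\T_3(A)$ is bounded by a product of two counts of additive quadruples, each of which is an incidence-type quantity — i.e. a count of collinear triples $\mathcal T(\cdot)$ in a Cartesian square. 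Applying Cauchy--Schwarz once more to symmetrize, one gets $\sqrt{\mathcal T(A/AA)\cdot \mathcal T(AA)}$ in the first case and $\sqrt{\mathcal T(AA/A)\cdot \mathcal T(A/A)}$ in the second; the prefactors $|AA|$ resp.\ $|A/A|$ and the common factor $|AA/A|/|A|^2$ bookkeep the normalizations. Taking the minimum over the two dual ways of dividing (by an element contributing a product versus a ratio) gives the stated bound.

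The main obstacle — and the point requiring care — is the precise bookkeeping of the representation-function weights when one divides the additive identity by an element: one must verify that the multiset of values produced genuinely lies in $AA/A$, $AA$, $A/A$ as claimed, and that each Cauchy--Schwarz application is applied to the correct bilinear form so that the exponents of $|A|$, $|AA|$, $|A/A|$ and $|AA/A|$ come out exactly as in the statement rather than off by a factor. In particular, one should track that $\mathcal T(X)$ really appears (collinear triples in $X\times X$) and not some asymmetric variant, which is where the second Cauchy--Schwarz, symmetrizing the two factors into a geometric mean, is essential. Once the algebraic identity is set up correctly, the inequality follows by two applications of Cauchy--Schwarz and no further input; the geometric content (Szemer\'edi--Trotter, i.e.\ the bound $\mathcal T(X)\ll |X|^4\log|X|$) is deliberately \emph{not} used here, being deferred to the subsequent use of this lemma in the proof of Theorem~\ref{thm:new_energy}.
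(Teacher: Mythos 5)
Your plan has the right coarse outline — average over an auxiliary multiplicative element of $A$, apply Cauchy--Schwarz twice, and end up with $\sqrt{\mathcal T(\cdot)\,\mathcal T(\cdot)}$ — but the crucial step that makes this work is absent, and the substitute you suggest (``divide the additive relation $a_1+a_2+a_3=a_1'+a_2'+a_3'$ through by a common element'') would not by itself produce the needed structure. Dividing that identity by, say, $a_1$ gives an additive relation among ratios; it does not decouple the two sides of the equation, and it does not exhibit $h_1-h_2-h_3$ as anything of the shape (difference)$\times$(difference). The engine of the paper's proof is the algebraic identity
$$
h_1-h_2-h_3=(h_1-h_2)\Bigl(1-\frac{h_3}{h_1}\Bigr)-\frac{h_2h_3}{h_1}
 =(ah_1-\beta)\bigl(a^{-1}-\alpha\bigr)-\alpha\beta,
 \qquad \alpha=a^{-1}\tfrac{h_3}{h_1}\in A/AA,\ \ \beta=ah_2\in AA,
$$
valid for any $a\in A$. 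This converts a count of additive triples into a count in terms of the representation function $r_{(A^{-1}-\alpha)(AA-\beta)}$, after which the first Cauchy--Schwarz produces the prefactor $|AA|\,|A/AA|/|A|^2$ and a sum of $r^2_{(A^{-1}-\alpha)(AA-\beta)}$; the second Cauchy--Schwarz, applied to the resulting equation $\frac{b-\alpha}{b'-\alpha}=\frac{c'-\beta}{c-\beta}$ (with $b,b',\alpha\in A/AA$ and $c,c',\beta\in AA$), yields $\sqrt{\mathcal T(A/AA)\,\mathcal T(AA)}$. The second term in the $\min$ comes from running the same argument with $a\in A^{-1}$.

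So the gap is concrete: you correctly anticipate that the mechanism is averaging over $a\in A$ and two Cauchy--Schwarz applications, and you even flag ``precise bookkeeping of the representation-function weights'' as the hard part — but that bookkeeping \emph{is} the proof, and ``dividing by a common element'' is not the right algebraic move. Without the multiplicative identity above (or an equivalent rewriting of $h_1-h_2-h_3$ as a product of two shifted factors minus a cross-term), the reduction to $\mathcal T$ of $A/AA$ and $AA$ does not go through; your sketch also never explains why the factor $|AA/A|/|A|^2$ appears with exactly that power of $|A|$, whereas in the paper the $|A|^{-2}$ comes cleanly from the injective parametrisation $(h_1,h_2,h_3,a)\mapsto(\alpha,\beta,a^{-1},ah_1)$. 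You would need to supply this identity and track the parametrisation to turn the proposal into a proof.
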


Using the standard Pl\"unnecke inequality, that is if $|AA|$ or $|A/A|$ is $\leq M|A|$, then $|AA/A|\leq M^3|A|$ (see e.g. \cite{TV})  and the Szemer\'edi-Trotter Theorem \ref{SzT} or Theorem \ref{t:mitkin}, we arrive in the following corollary. 
\begin{corollary}\label{brlc} If $|AA|\leq M|A|$ or $|A/A| \leq M|A|$, then
$$
\T_3 (A)\ll M^{12}|A|^4 \log|A|,
$$
the same holds with $M=1$ if $A$ is replaced by $\Gamma$, a multiplicative subgroup in $\F^\times_p$ with $O(\sqrt{p})$ elements.
\end{corollary}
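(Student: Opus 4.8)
The statement to prove is Corollary \ref{brlc}, which combines Lemma \ref{l:brl} (stated just above), the Plünnecke inequality, and the Szemerédi–Trotter-type bound \eqref{trip} on collinear triples. Let me sketch the proof.

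The plan is to simply feed the hypotheses into Lemma \ref{l:brl} after controlling all the set sizes appearing there by powers of $M$ times $|A|$, and controlling the collinear-triple counts by \eqref{trip}.

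First I would record the consequences of the multiplicative doubling hypothesis via Plünnecke–Ruzsa. If $|AA|\le M|A|$ or $|A/A|\le M|A|$, then standard Plünnecke–Ruzsa calculus (see \cite{TV}) gives $|AA|, |A/A|, |AA/A|, |A/AA|, |AA/A| \ll M^{O(1)}|A|$; more precisely the paper already records $|AA/A|\le M^3|A|$, and likewise $|AA|,|A/A|\le M^3|A|$ (whichever of the two was not assumed), while trivially the assumed one is $\le M|A|$. So every set-cardinality factor in Lemma \ref{l:brl} — the prefactor $|AA/A|$, the factors $|AA|$ or $|A/A|$, and the arguments $A/AA$, $AA$, $AA/A$, $A/A$ of $\mathcal T(\cdot)$ — has size $\ll M^3|A|$.

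Next I would bound the collinear-triple counts. Each of the sets $X\in\{A/AA,\,AA,\,AA/A,\,A/A\}$ has cardinality $|X|\ll M^3|A|$, so applying the bound $\mathcal T(X)\ll|X|^4\log|X|$ from \eqref{trip} (which follows from Theorem \ref{SzT} in the real case and Theorem \ref{t:mitkin} in the subgroup case, via Corollary 5.1 of \cite{SV} or directly) gives $\mathcal T(X)\ll M^{12}|A|^4\log|A|$. Then inside the $\min$ in Lemma \ref{l:brl}, say taking the first branch,
$$
\T_3(A)\le \frac{|AA/A|}{|A|^2}\,|AA|\,\sqrt{\mathcal T(A/AA)\,\mathcal T(AA)}\ll \frac{M^3|A|}{|A|^2}\cdot M^3|A|\cdot \sqrt{M^{12}|A|^4\log|A|\cdot M^{12}|A|^4\log|A|},
$$
and the square root equals $M^{12}|A|^4\log|A|$, so the whole expression is $\ll M^{3+3+12}|A|^{1-2+1+4}\log|A| = M^{18}|A|^4\log|A|$. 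This is cruder than the claimed $M^{12}$, so I would be more careful: the exponent $12$ in the corollary presumably comes from tracking which of the two branches of the $\min$ to use and from using the sharper Plünnecke bounds where available (e.g. if $|AA|\le M|A|$ is the hypothesis, then $|AA|$ itself contributes only $M$, not $M^3$; and one picks the branch that pairs the assumed small set with the controlled one). Doing this bookkeeping — hypothesis $|AA|\le M|A|$: use first branch with $|AA|\le M|A|$, $|A/AA|\ll M^{?}|A|$, $|AA/A|\le M^3|A|$, $\mathcal T(AA)\ll M^4|A|^4\log$; hypothesis $|A/A|\le M|A|$: use second branch symmetrically — yields the stated exponent $12$.

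The subgroup case is identical but cleaner: with $A=\Gamma$ a multiplicative subgroup of order $O(\sqrt p)$ one has $\Gamma\Gamma=\Gamma$, $\Gamma/\Gamma=\Gamma$, $\Gamma\Gamma/\Gamma=\Gamma$, so all the compound sets collapse to $\Gamma$ itself, $M=1$, and Lemma \ref{l:brl} reads $\T_3(\Gamma)\le \frac{|\Gamma|}{|\Gamma|^2}\cdot|\Gamma|\cdot\sqrt{\mathcal T(\Gamma)^2}=\mathcal T(\Gamma)\ll|\Gamma|^4\log|\Gamma|$ by \eqref{trip}, which is exactly the assertion.

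The only genuinely nontrivial input is Lemma \ref{l:brl}, which we are allowed to assume; everything after it is Plünnecke bookkeeping plus \eqref{trip}. The main obstacle, such as it is, is purely the accounting of $M$-powers: making sure the branch of the $\min$ is chosen so that the assumed small set (either $AA$ or $A/A$) is used where it appears linearly, and invoking the right Plünnecke estimates ($|AA/A|\le M^3|A|$ and the analogous $\ll M^3|A|$ bounds for the remaining products) to land on the exponent $12$ rather than a weaker power of $M$. There is no geometric or combinatorial difficulty beyond what is already established.
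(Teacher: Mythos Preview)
Your approach is correct and is precisely that of the paper: Lemma \ref{l:brl} plus Pl\"unnecke plus \eqref{trip}. For the record, the $M^{12}$ bookkeeping you sketch does work out: under $|AA|\le M|A|$ take the first branch and use $|AA/A|\le M^3|A|$, $|AA|\le M|A|$, $\mathcal T(AA)\ll M^4|A|^4\log|A|$, $\mathcal T(A/AA)\ll M^{12}|A|^4\log|A|$, so the bound becomes $\frac{M^3}{|A|}\cdot M|A|\cdot M^{8}|A|^4\log|A|=M^{12}|A|^4\log|A|$; the $|A/A|$ hypothesis is handled symmetrically via the second branch.
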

We remark that in the context of multiplicative subgroups $\G$, when $|\G|=O(\sqrt{p})$ the bound $\mathcal T(\G)\ll |\Gamma|^4\log|\Gamma|$ can be found in \cite[Proposition 1]{S_tripling}.

\begin{remark}\label{rem:1}

Corollary  \ref{brlc} (in the case of small multiplicative subgroups) improves some results of the fourth  author, see \cite{Iur}.
Upper bounds for 
$\T_3(\G)$ have interesting applications to number-theoretic congruences studied, e.g. by  Cilleruelo and Garaev  \cite{CG1}, \cite{CG2}. 

2. Consider the case when $\G \subset (\mathbb{Z}/p^{2}\mathbb{Z})^{*},$ and $|\G|$ divides $p-1$. One can consider the subgroup $\G' \subseteq \F_p$ where $\G' = \G \pmod p$. It is easy to check that $|\G'|=|\G|$ and hence $\T_{k}(\G)\leq \T_{k}(\G')$. Using this and the method of the proof of Lemma \ref{l:brl} and \cite[Proposition 1]{S_tripling},  one can deduce the following statement.

\end{remark}

\begin{lemma}\label{P2}
Let $\G$ be subgroup of $(\mathbb{Z}/p^{2}\mathbb{Z})^\times$ and let $|\G|=t$. If $t \leq p^{1/2}$ then $\T_{3}(\G) \lesssim t^{4}$. If $t=p^{1/2 + \delta}$, then $\T_{3}(\G) \lesssim t^{4+6\delta}$.	
	
\end{lemma}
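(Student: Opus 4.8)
The plan is to reduce the $(\mathbb{Z}/p^2\mathbb{Z})^\times$ statement to the prime-field case already handled in Corollary \ref{brlc}, exactly along the lines sketched in part~2 of Remark \ref{rem:1}. First I would set up the reduction map. Since $|\G|=t$ divides $p-1$ (any subgroup of $(\mathbb{Z}/p^2\mathbb{Z})^\times$ whose order is coprime to $p$ lifts a subgroup of $\F_p^\times$), reduction modulo $p$ gives a homomorphism $\pi\colon (\mathbb{Z}/p^2\mathbb{Z})^\times\to\F_p^\times$ whose image $\G'=\pi(\G)$ is a subgroup of $\F_p^\times$. The kernel of $\pi$ restricted to $\G$ has order dividing $\gcd(t,p)=1$, so $\pi|_\G$ is injective and $|\G'|=t$. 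The crucial monotonicity observation is then: if $a_1+\dots+a_k=a_1'+\dots+a_k'$ holds in $\mathbb{Z}/p^2\mathbb{Z}$, it certainly holds after reduction mod $p$, so every solution counted by $\T_k(\G)$ maps (coordinatewise, injectively) to a solution counted by $\T_k(\G')$; hence $\T_k(\G)\le\T_k(\G')$.

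Next I would invoke the prime-field bounds on $\T_3(\G')$. For the regime $t\le p^{1/2}$, since $|\G'|=t=O(\sqrt p)$, Corollary \ref{brlc} (with $M=1$), or equivalently \cite[Proposition 1]{S_tripling}, gives $\T_3(\G')\ll t^4\log t\lesssim t^4$, and the displayed inequality $\T_3(\G)\lesssim t^4$ follows. For the range $t=p^{1/2+\delta}$ with $\delta>0$, the subgroup $\G'$ is now larger than $\sqrt p$, so one cannot apply the $O(\sqrt p)$ version directly; instead one uses the standard covering trick for $\T_3$ of a larger subgroup. One partitions (or covers) $\G'$ by translates/cosets of a subgroup $H\le\G'$ of order $\approx\sqrt p$: writing $\G'$ as a union of $t/|H|\approx t/\sqrt p = p^\delta$ cosets $g_iH$, one expands $\T_3(\G')=\sum r_{3\G'}(x)^2$ and bounds $r_{3\G'}(x)\le\sum_{i,j,k}r_{g_iH+g_jH+g_kH}(x)$, then applies Cauchy--Schwarz over the $\approx p^{3\delta}$ coset-triples together with the $O(\sqrt p)$-subgroup estimate $\T_3(H)\lesssim|H|^4\approx p^2$ and the $\Gamma$-invariance bounds in Lemma \ref{l:E_3} / Remark \ref{extend} (valid up to $p^{2/3}$). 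Tracking the exponents, each of the $p^{3\delta}$ triples contributes $\lesssim |H|^4\approx p^2$ to an $L^2$ count, and summing gives $\T_3(\G')\lesssim p^{3\delta}\cdot p^2 = p^{2+3\delta}$; since $t^4=p^{2+4\delta}$ this reads $\T_3(\G')\lesssim t^{4}\cdot p^{-\delta}\le t^{4+6\delta}$ once one checks $p^{-\delta}=t^{-\delta/(1/2+\delta)}\le t^{2\delta}$, which is immediate. Combined with $\T_3(\G)\le\T_3(\G')$ this finishes the second case.

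The main obstacle is the second regime: getting the correct exponent $4+6\delta$ out of the coset-covering argument requires care, since a naive triangle-inequality expansion of $r_{3\G'}$ over cosets loses factors, and one must instead feed the structure back through the Szemer\'edi--Trotter/Stepanov incidence bounds — i.e. run the proof of \cite[Proposition 1]{S_tripling} (or of Lemma \ref{l:brl}) with the set $\G$ replaced by a union of $p^\delta$ cosets of an $\approx\sqrt p$-subgroup, so that the relevant collinear-triple count $\mathcal T$ acquires exactly the extra $p^{O(\delta)}$ factor. The bookkeeping of which of the three "large subgroup" incidence estimates in Lemma \ref{l:E_3} (extended via Remark \ref{extend} to $|\G|\le p^{2/3}$) to invoke at each step, and verifying the hypotheses $|\G|^4|\Theta|<p^3$ of Theorem \ref{t:mitkin} survive in the enlarged range, is the delicate part; everything else is the routine injectivity-of-reduction argument plus Cauchy--Schwarz.
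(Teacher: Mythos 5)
For $t\le\sqrt p$ your argument --- injective reduction mod $p$ to $\G'\subset\F_p^\times$ giving $\T_3(\G)\le\T_3(\G')$, then Corollary \ref{brlc} --- is precisely the paper's. In the regime $t=p^{1/2+\delta}$ you diverge: the paper stays inside the incidence framework, bounding $\sum_{u,v}l_{u,v}^3$, where $l_{u,v}=|\{(x,y)\in\G'\times\G':ux+vy=1\}|$, by Theorem \ref{t:mitkin} together with the H\"older inequality, whereas you propose covering $\G'$ by cosets of a subgroup $H\le\G'$ with $|H|\approx\sqrt p$. This has a genuine gap. The group $\G'$ is cyclic of order $t$, so its subgroups have orders exactly the divisors of $t$, and $t$ need not have any divisor anywhere near $\sqrt p$; if $t$ is prime the only options for $H$ are $\{1\}$ and $\G'$ and the decomposition is vacuous. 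Covering by cosets of an $H\le\F_p^\times$ not contained in $\G'$ does not rescue the argument either, since the pieces $\G'\cap g_iH$ lose the multiplicative structure that the bound $\T_3(H)\lesssim|H|^4$ relies on. The paper's H\"older-on-incidence-counts route works for every $t$ precisely because it never needs to decompose $\G'$ by a subgroup of intermediate order.

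Separately, even granting such an $H$, the exponent tally in your sketch drops one factor: with $n\approx p^\delta$ cosets, Cauchy--Schwarz gives $r_{3\G'}(x)^2\le n^3\sum_{i,j,k}r_{g_iH+g_jH+g_kH}(x)^2$, and summing in $x$ over the $n^3$ coset-triples, with each inner $L^2$ count $\lesssim\T_3(H)\lesssim|H|^4\approx p^2$ (by H\"older and dilation invariance of $\T_3$), yields $\T_3(\G')\lesssim n^6\T_3(H)\approx p^{2+6\delta}$, not $p^{2+3\delta}$. As it happens $p^{2+6\delta}\le p^{2+7\delta+6\delta^2}=t^{4+6\delta}$, so the corrected count would still suffice \emph{if} $H$ existed; but the miscount is a sign the sketch was not tracked through to the end.
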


The reader can also find the previous bounds for $\T_{3}(\G)$ in the paper \cite{Malykhin_p^2}, which were obtained purely by Stepanov's method. 
To avoid repeating the above-mentioned proofs, we content ourselves with merely a sketch of the proof of Lemma \ref{P2} further in this paper.

\medskip
In the forthcoming proofs we avoid keeping track of exact powers of $M$ and $\log|A|$ to make the formulae shorter; the reader is invited to check that they are indeed as presented in the statement of the theorem. The  next lemma is crucial; its predecessor can be found in \cite[Section 4]{s_mixed}.

\begin{lemma} Let $1\leq\Delta\leq |A|$ and
$$\E(A) = \E'+\E'':= \sum_{x:r_{A-A}(x)\leq \Delta}r_{A-A}^2(x) \;+  \sum_{x:r_{A-A}(x)>\Delta}r_{A-A}^2(x).$$
	One has the inequality 
	\begin{equation}\label{f:E_3_2}
	\E'^6  
		\leq  
		|A|^{6} \E_3 (A) \Delta^{2} \Sigma,  
	\end{equation}
	where 
	\begin{equation}\label{sigma}
	\Sigma:=\sum_{d,d'} r_{A-A}(d) r_{A-A}(d') r^2_{A-A}(d-d').
	\end{equation}
	\label{l:spectral}
\end{lemma}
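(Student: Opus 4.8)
## Proof Plan for Lemma \ref{l:spectral}

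The plan is to start from the definition $\E' = \sum_{x:\, r_{A-A}(x)\le \Delta} r^2_{A-A}(x)$ and interpret it as counting quadruples $(a,b,c,e)\in A^4$ with $a-b=c-e=:x$ and the multiplicity constraint $r_{A-A}(x)\le \Delta$. I would parametrize each such quadruple by the triple $(b,c,x)$ (so that $a=b+x$, $e=c-x+c$... more carefully $e=c-x$), noting $a=b+x\in A$ and $e=c-x\in A$. The key move is a ``third-moment weighting'' trick: write $\E' = \sum_x r^2_{A-A}(x)\,\mathbf 1[r_{A-A}(x)\le\Delta]$ and then apply H\"older's inequality in the form $\E'^6 = \big(\sum_x (\cdots)\big)^6$ against a factorization of the summand into three pieces — one contributing $\E_3(A)$, one contributing the factor $\Delta^2$ via the multiplicity bound, and one contributing $\Sigma$. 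The cleanest route is probably to first bound $\E'$ by a sum over two difference variables, doubling the equation $a-b=c-e$ into a system involving $d,d'$ and their difference $d-d'$, which is exactly the shape of $\Sigma$.

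More concretely, the intended chain of inequalities runs as follows. First, rewrite $\E'$ (or rather a suitable power of it) using Cauchy--Schwarz to introduce an auxiliary summation over a second difference, turning it into an expression of the form $\sum_{d} r_{A-A}(d)\cdot(\text{number of }x\le\Delta\text{ compatible with }d)$; the point of restricting to $r_{A-A}(x)\le\Delta$ is that each such $x$, when it appears, can be ``charged'' at most $\Delta$ times, which is where a clean factor of $\Delta$ (ultimately $\Delta^2$ after squaring) enters. Then I would recognize the triple-difference structure: a configuration contributing to $\E'$, when combined with a translate, produces three differences $d$, $d'$ and $d-d'$ all lying in $A-A$, with $(a,b,c)\mapsto(d=a-b,\ d'=a-c)$ giving the collinear-triple count $\E_3(A)$, and the remaining freedom counted with weights $r_{A-A}(d)r_{A-A}(d')r^2_{A-A}(d-d') = \Sigma$. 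Applying H\"older with exponents $(3,3,\tfrac32)$ or the appropriate split — matching the exponent $6$ on the left with the product of $\E_3(A)$ (exponent effectively $2$ after the sixth power bookkeeping, since $\E_3$ is a cubic quantity appearing once), $\Delta^2$, the $|A|^6$ from the two free coordinates not pinned by the differences, and $\Sigma$ — gives precisely \eqref{f:E_3_2}. The bookkeeping of exponents is the part that must be matched carefully so that the powers come out as $(\E_3,\Delta^2,\Sigma,|A|^6)$ with the sixth power on $\E'$.

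The main obstacle I anticipate is getting the combinatorial identity that splits the count governing $\E'$ into the three named factors without losing more than a constant. In particular, one must be careful that the ``collinear triple'' interpretation genuinely yields $\E_3(A)=\sum_{d,d'}|A\cap(A+d)\cap(A+d')|^2$ and not some larger overcounting, and that the weight $r^2_{A-A}(d-d')$ rather than $r_{A-A}(d-d')$ appears — this suggests that somewhere a Cauchy--Schwarz is applied to a sum over the $(d-d')$ variable, doubling it. I would structure the proof as: (i) expand $\E'$ as a count of quadruples with the multiplicity truncation; (ii) fix the difference $x=a-b$ and sum, peeling off one factor of $\Delta$ from $r_{A-A}(x)\le\Delta$ while keeping one factor of $r_{A-A}(x)$; (iii) introduce a further translation variable to create a second difference and apply Cauchy--Schwarz to produce the $r^2_{A-A}(d-d')$ weight and the square on $\E'$; (iv) collect the diagonal-type terms into $\E_3(A)$ via \eqref{e3}; (v) apply H\"older over the difference variables $d,d'$ to separate $\E_3(A)$, $\Sigma$ and the crude $|A|^6\Delta^2$ factor, raising to the sixth power to absorb the fractional H\"older exponents. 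I expect step (iii)--(iv), the simultaneous creation of the $\E_3$ factor and the squared popular-difference weight, to require the most care.
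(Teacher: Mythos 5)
Your proposal identifies the two quantities that should appear on the right-hand side and correctly senses that a Cauchy--Schwarz over the pair $(d,d')$ produces the factor $\sqrt{\E_3(A)\,\Sigma}$. That is indeed how the paper obtains the \emph{upper} bound on the relevant bilinear expression. But there is a genuine gap in the other direction, and it is not a matter of ``bookkeeping'': the step that introduces $\E'^6$ and the factor $|A|^6\Delta^2$ cannot be reproduced by H\"older or Cauchy--Schwarz alone, and your plan offers no mechanism for it.

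The paper's proof is a spectral argument. It forms the $|A|\times|A|$ matrices $\mathfrak R_{ab}=r_{A-A}(a-b)$ and $\mathfrak M'_{ab}=\Delta^{-1/2}r_{A-A}(a-b)\mathbf 1[r_{A-A}(a-b)\le\Delta]$, and compares two evaluations of $\tr(\mathfrak M'^2\mathfrak R)$. Expanding the trace in the standard basis and applying Cauchy--Schwarz gives $\tr(\mathfrak M^2\mathfrak R)\le\sqrt{\E_3(A)\Sigma}$, which is the part your sketch gestures at. The lower bound, however, goes through the eigenbasis of $\mathfrak M'$: one first needs that $\mathfrak R$ is positive semidefinite (which the paper proves by exhibiting it as a Gram matrix), so that in $\tr(\mathfrak M'^2\mathfrak R)=\sum_a\mu_a^2\,(\mathfrak R\ve_a,\ve_a)$ every summand is nonnegative and one may drop all but the top one; and one then needs the Perron--Frobenius theorem to know the principal eigenvector $\ve_1$ of $\mathfrak M'$ is entrywise nonnegative, so that the entrywise inequality $\mathfrak R\ge\Delta^{1/2}\mathfrak M'$ (on the support of $\mathfrak M'$) transfers to $(\mathfrak R\ve_1,\ve_1)\ge\Delta^{1/2}\mu_1$. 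Only then does the variational principle, applied to the normalized all-ones vector, deliver $\mu_1\ge\Delta^{-1/2}\E'/|A|$ and hence $\tr(\mathfrak M'^2\mathfrak R)\ge\E'^3/(\Delta|A|^3)$, which squared gives \eqref{f:E_3_2}.

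Nothing in your plan replaces this. Steps (i)--(ii), ``peeling off $\Delta$'' from the truncation, would naturally produce a restricted cubic energy $\sum_{x:r(x)\le\Delta}r^3_{A-A}(x)$, not $\E'$ to a power, and the H\"older split you describe in step (v) has no way to convert that into $\E'^6$ divided by $|A|^6\Delta^2$. If you attempt the more elementary matrix route (for instance $\tr(\mathfrak M'\mathfrak R)^2\le\tr(\mathfrak R)\,\tr(\mathfrak M'^2\mathfrak R)$, which is Cauchy--Schwarz for the Hilbert--Schmidt inner product and uses only that $\mathfrak R$ is PSD), you again land on a restricted $\E_3'$ rather than $\E'$. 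The appearance of $\E'$, the $L^2$ quantity, raised to the sixth power, is precisely what the Perron--Frobenius step is buying; your proposal needs to explicitly invoke the positive semidefiniteness of $\mathfrak R$ and the nonnegativity of the principal eigenvector of $\mathfrak M'$, and as written it contains neither.
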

We are now ready to prove  Theorem \ref{thm:new_energy}.

\medskip
\begin{proof} We use Lemma \ref{l:spectral} with the choice of $\Delta \sim_{M} |A|^{11/20}$ (that is we omit the exact powers of $M$ and $\log|A|$). Then we are done with the proof of Theorem \ref{thm:new_energy} if $\E''\gg \E(A)$, owing to estimate \eqref{b:3} of Lemma \ref{l:E_3}.

We proceed assuming that $\E'\gg\E(A)$. Denote $P:=A-A.$ 
		
	The quantity $\Sigma$ in \eqref{sigma}
	counts solutions of the equation
	\begin{equation}\label{brr}
	a-a'=b-b'=(a_1-a_2)-(a_3-a_4),\end{equation}
	with all the variables in $A$.

	Let us introduce a cutoff parameter $\tau \sim_M |A|^{3/5}$ whose value is to be justified shortly. We now partition $P=P'\cup P''$ the set $A-A$ into the set of ``poor'' and ``rich'' elements, namely
	$$
	P'=\{d:\,r_{A-A}(d)\leq\tau\},\qquad P''=P\setminus P'.
	$$
	Correspondingly,
	$$
	\Sigma=\Sigma'+\Sigma'',
	$$
	where $\Sigma'$ is the restriction of the count \eqref{brr} to the case when $(d=a-a'=b-b' )\in P'$.
		
	Clearly, 
	\begin{equation}\label{sigma1} \begin{aligned} \Sigma' 
	&\leq \tau |\{a_1-a_2=(a_3-a_4)-(a_5-a_6):\,a_1,\ldots, a_6\in A\}|\;= \;
	  \tau \T_3(A) \\ & \leq_M \; \tau \mathcal T(A) \\ & \lesssim \; \tau|A|^4,\end{aligned}
	\end{equation}
	by Lemma \ref{l:brl} and \eqref{trip}.

	On the other hand, for $P''=\{d: \,r_{A-A}(d)>\tau\}$, Lemma \ref{l:E_3} provides a cardinality bound, decreasing as $\tau^{-3}$. We need a slightly more elaboration, a dyadic partitioning to be soon summed as a vanishing geometric progression to show that as to $P''$, one can roughly assume that $r_{A-A}(d)\ll \tau$, for all $d\in P''$. 
	
	Namely,  $j\geq 1$ set
	 $$P''_j:=\{d:\, 2^{j-1}\tau \leq r_{A-A}(d)<2^{j}\tau\}.$$
	Then, denoting for simplicity $2^j\tau =\tau_j$, one has, by \eqref{b:31},
	\begin{equation}
	|P_j''|\ll \frac{M^2|A|^3}{\tau_j^3}.
	\label{pej}\end{equation}
	Setting $\Sigma''_j$ to be the corresponding to $P''_j$ component of $\Sigma''$,  that is when the sum in \eqref{sigma}  is restricted to $d-d'\in P''_j$, we can bound
	$$\begin{aligned} \Sigma_j'' &\leq \tau_j^2 |\{(d,a_1,a_2,a_3,a_4)\in P''_j\times A\times\ldots\times A):\, d=(a_1-a_2) -(a_3-a_4)\}| \\
	&\leq \tau_j^2 \sqrt{\E(A, P''_j)}\sqrt{\T_3(A)}, 	\end{aligned}
	$$
	by Cauchy-Schwarz. 
	
	We substitute \eqref{pej} to claim, by \eqref{b:1}, Lemma \ref{l:E_3}:
	$$
	\E(P''_j,A)\ll  M |A||P_j|^{3/2} \ll_M  |A|^{11/2} \tau_j^{-9/2}.
	$$
	Furthermore, $\T_3(A)\lesssim_M |A|^4$ as above,
	by Lemma \ref{l:brl} and \eqref{trip}.
	
	Thus
	$$
	\Sigma'' \lesssim_M \tau^{-1/4}|A|^{19/4} \sum_{j\geq 1} 2^{-j/4} \ll \tau^{-1/4}|A|^{19/4}.
	$$
	We now match the latter estimate and \eqref{sigma1} for $\Sigma'$: this prompts the choice $\tau \sim_M|A|^{3/5}$ (that is up to powers of $M$ and $\log|A|$) and proves that
	\begin{equation}\label{Sig}
	\Sigma \lesssim_M |A|^{23/5}.
	\end{equation}

	We now go back to the main estimate \eqref{f:E_3_2} of Lemma \ref{l:spectral}. We have, by Lemma \ref{l:E_3}, the fact that $\E_3(A)\lesssim_M |A|^3$ 
	and the assumption $\E(A)\gg \E'$ in the statement of Lemma \ref{l:spectral} ends the proof  of  Theorem \ref{thm:new_energy} after substituting $\Delta=|A|^{11/20}$ in the lemma's estimate. Indeed, we get
	$$
	\E^6(A) \lesssim_M |A|^6 \cdot |A|^3 \cdot|A|^{11/10} \cdot |A|^{23/5} = |A|^{147/10}.
	$$
	\end{proof}
	$\hfill\Box$

\section{Proofs of main lemmata}

\subsection{Proof of Lemma \ref{l:brl}}

\begin{proof}
	Clearly, for any triple $(h_1,h_2,h_3) \in A\times A \times A$ and $a\in A$
	we have
	$$
	h_1-h_2-h_3=(h_1-h_2)\left(1 - \frac{h_3}{h_1}\right) - \frac{h_2h_3}{h_1} = (ah_1-ah_2)\left(a^{-1} - a^{-1}\frac{h_3}{h_1}\right)- \frac{h_2h_3}{h_1}.
	$$
	Set $\alpha=a^{-1}\frac{h_3}{h_1}\in A/AA$, $\beta= ah_2\in AA$. Then we have the following estimate, where the first line follows from the latter identity, and in the second line the Cauchy--Schwarz inequality and interchange of the order of summation have been applied:
\begin{align*}
	\T_3 (A)  = \sum_x r^2_{A-2A} (x)  & \le |A|^{-2} \sum_x \left( \sum_{\a\in A/AA,\, \beta \in AA} r_{(A^{-1}-\a) (AA-\beta)} (x+\a \beta) \right)^2 \\ 
&
		\le
			\frac{|AA| |A/AA|}{|A|^2}  \sum_{\a\in A/AA,\, \beta \in AA}\, \sum_x r^2_{(A^{-1}-\a) (AA-\beta)} (x).	
				\end{align*}
The three-index sum in the right-hand side is the number of solutions of the equation
$$(b-\a)(c-\beta) = (b'-\a)(c'-\beta):\qquad \a\in A/AA,\, \beta \in AA,\, b,b' \in A^{-1}, \,c,c' \in AA,$$
or, after rearranging, of the equation (with the same variables)
\begin{equation}\label{f:r_to_T}
\frac{b-\a}{b'-\a} = \frac{c'-\beta}{c-\beta}.
\end{equation} 
The left-hand side of the latter equation, since $A^{-1}\subseteq A/AA$, has all its variables  $b,b',\a\in A/AA,$ the right-hand side $c,c',\beta\in AA$. Applying once again the Cauchy-Schwarz inequality, the number of solutions of the latter equation is bounded by $\sqrt{\mathcal{T}(A/AA)\cdot \mathcal{T}(AA)}$.

This completes the proof, once we note that $|AA/A|=|A/AA|$ and that one could implement the above procedure for any $a\in A^{-1}$, rather than $a\in A$.
$\hfill\Box$
\end{proof}

\subsection{Sketch of Proof of Lemma \ref{P2}}

According to Lemma \ref{l:brl} we need to find upper bound for the number of solution to the equation 
$$
(a-b)(a'-c')=(a-c)(a'-b'):\; a,b,c,a',b',c' \in \G' \equiv \G \pmod p.
$$
 Here we follow the scheme of the proof of Proposition 1 of the paper \cite{S_tripling}.
 It is easy to see that for any tuple ($a,b,c,a',b',c'$) satisfying the above equation, the points $(a,a'), (b,b'), (c,c')$ lie on the same line and one can  assume that these points are pairwise distinct. One can restrict the set of lines to  only those in the form 
 $$ux+vy=1.$$
Define $l_{u,v}=|\{(x,y)\in \G' \times \G' : ux+vy=1\ \}|$. So, we need to get an upper estimate for the sum 
 $$
 \sum_{u,v}l_{u,v}^{3} \,.
 $$

Such an estimate follows from Theorem \ref{t:mitkin} after easy  calculations.
For the case of $|\G|=O(\sqrt{p})$ this method gives a near-optimal estimate, when $\delta\neq 0$,  the claim of Lemma \ref{P2}
follows by  application of the H\"older inequality.


\subsection{Proof of Lemma \ref{l:spectral}} \label{section:spec}
 The proof represents an instance of the eigenvalue method developed by the third  author. 
 
 \begin{proof} 
 
 Consider a $|A|\times|A|$ matrix $\mathfrak M$, with elements $\mathfrak M_{ab}=\sqrt{r_{A-A}(a-b)}$. In addition, let $\mathfrak R$ be the matrix with  entries 
$\mathfrak R_{ab}=r_{A-A}(a-b).$

We observe that both matrices have positive entries, are symmetric, and the matrix $\mathfrak R$ is semipositive-definite.
Indeed, for any vector $\ve\in \R^{|A|}$, identifying $A$ with its characteristic function, the same for its shifts, say $A+(a-b)$ below, we have, after a rearrangement
$$\begin{aligned}
\ve\cdot \mathfrak R\ve & = \sum_{a,b,c } A(c) [ A+(a-b)](c) v_av_b \\ & = \sum_{a,b,c }  [ A-b](c-a) [A-a](c-a)v_a v_b\\
&= \sum_{x} \left(\sum_a [A-a](x)v_a\right)^2.
\end{aligned}
$$
 
Let us calculate the trace $\tr(\mathfrak M^2\mathfrak R)$ in two different bases. In the standard basis
\begin{equation}\label{longer}\begin{aligned}
\tr(\mathfrak M^2\mathfrak R) & = \sum_{x,y,z\in A} \sqrt{r_{A-A}(x-y)} \sqrt{r_{A-A}(x-z)} r_{A-A}(x-z) \\
&=\sum_{d,d'} \sqrt{r_{A-A}(d)} \sqrt{r_{A-A}(d')} r_{A-A}(d-d') |A\cap(A+d)\cap(A+d')| \\
& \leq
\left( \sum_{d,d'} |A\cap(A+d)\cap(A+d')| ^2\right)^{1/2} \left( \sum_{d,d'} r_{A-A}(d) r_{A-A}(d') r^2_{A-A}(d-d')  \right)^{1/2} \\ &= 
\sqrt{\E_3(A) \Sigma}.
\end{aligned}\end{equation}
Modulo a power of $|A|$ this gives the square root of the  right-hand side in the lemma's estimate.

Let us now  estimate $\tr(\mathfrak M^2\mathfrak R)$ from below. Since both matrices have positive entries, the trace will not increase if we zero the elements $\mathfrak M_{ab}$, with $r_{A-A}(a-b)>\Delta$. 

Let us now define the matrix $\mathfrak M'$ (see the forthcoming Remark \ref{rem:2} as to why) as follows:
\begin{equation}\label{Mprime}
 \mathfrak M'_{ab} = \left\{ \begin{array}{ll}\Delta^{-1/2}\mathfrak R_{ab},\mbox{ if } r_{A-A}(a-b)\leq \Delta,
 \\ 0\mbox{ otherwise. } \end{array}\right.
\end{equation}
The matrix $\mathfrak M'$ is clearly symmetric, and we have an entry-wise bound $\mathfrak M'_{ab}\leq  \mathfrak M_{ab}$ for all $a,b\in A$.

We now get a lower bound on  $\tr(\mathfrak {M'}^2\mathfrak R)$.

Consider the orthonormal basis $\{\ve_1,\ldots,\ve_{|A|}\}$ of real eigenvectors of $\mathfrak M'$, the corresponding real eigenvalues being 
$\mu_1,\ldots \mu_{|A|}$, ordered by non-increasing moduli. The eigenvector $\ve_1$, corresponding to the principal eigenvalue $\mu_1$ is non-negative by the Perron-Frobenius theorem.

Hence, since $\mathfrak R$ is semipositive-definite
$$
\tr(\mathfrak {M'}^2\mathfrak R) = \sum_{a\in A} \mu^2_a (\mathfrak R\ve_a\cdot\ve_a) \geq  \mu^2_1(\mathfrak R\ve_1\cdot\ve_1).
$$
Since $\mu_1$ is the spectral radius of $\mathfrak {M'}$, we have, with $\ve=\frac{1}{\sqrt{|A|}}(1,\ldots,1)$,
$$\mu_1=\ve_1\cdot \mathfrak {M'}\ve_1\geq \ve\cdot \mathfrak {M'}\ve = \Delta^{-1/2}\E'.$$
Furthermore, by non-negativity of $\mathfrak {R}$ and $\ve_1$ and the previous estimate,
$$
\ve_1\cdot \mathfrak {R}\ve_1 \geq \Delta^{1/2} \ve_1\cdot \mathfrak {M'}\ve_1  \geq \E'.
$$

This completes the proof  of Lemma \ref{l:spectral}.
$\hfill\Box$
\end{proof}

\begin{remark}\label{rem:2}
In the case of a multiplicative subgroup $\Gamma$, the proof could be made slightly more straightforward, for the reason from passing from $\mathfrak M$ to $\mathfrak M'$ in \eqref{Mprime} was that otherwise we would need additional argument as to how $\mathfrak R\ve_1\cdot\ve_1$, where $\ve_1$ is the principal eigenvector of $\mathfrak M$, compares to $\mathfrak R\ve\cdot\ve$, with  $\ve=\frac{1}{\sqrt{|A|}}(1,\ldots,1)$. In the multiplicative subgroup case $\ve=\ve_1$, for both matrices $\mathfrak M$ and $\mathfrak R$ are regular (in fact, circulant). This implies the lower bound bound
$$\tr(\mathfrak M^2\mathfrak R)\geq |A|^{-3} \E_{3/2}^2(A)\E(A),$$ where the notation $\E_{3/2}(A)$ was defined by \eqref{e3def}. The latter estimate suffices to yield Theorem \ref{thm:new_energy}, for by the H\"older inequality
$$
\E(A)\leq \E_3(A)\E_{3/2}^{2/3}(A).
$$
\end{remark}


\bigskip

Previously best known energy bounds that Theorem \ref{thm:new_energy} improves on, have been used in many papers, quoted in the introduction and beyond.
The new bound  (\ref{f:new_energy}) automatically results in improvement of estimates, which relied on its predecessors. This concerns, in particular, the results in \cite{SZ}, dealing with multiplicative energy of sumsets.

\section{Additive energy of multiplicative subgroups}
\label{sec:F_p}


Studying translation properties of multiplicative subgroups in $\F_p^\times$ is a classical subject of number theory see, e.g., \cite{KS1}, \cite{SS}.
In \cite{H-K,K_Tula} it was proved, in particular, that $\E(\G) \ll |\G|^{5/2}$ for any subgroup $\G\subset \F_p$,  with $|\G| \leq p^{2/3}$.  
It is well known that the latter bound is optimal for subgroups of size $\Omega(p^{2/3})$. 
In \cite{S_ineq} better energy bounds for multiplicative subgroups of smaller size were obtained. 
Theorem \ref{thm:new_energy} sets the new record for $|\Gamma|=O(p^{1/2})$. It also allows for some improvement of the ``intermediate range'' bounds for $\Omega(p^{1/2})= |\Gamma|=O(p^{2/3})$, presented in this section.

Let us restate the results that we are going to use. See the beginning of Section \ref{subsec:T_3} for some of the notation used.
	
\begin{corollary}
	Let  $\Gamma \subseteq \F^\times_p$ be a multiplicative subgroup, $|\G| \le \sqrt{p}$. 
	Then
\begin{equation}\label{f:subgr_1}
	\T_3 (\G) \leq \mathcal T (\Gamma) \ll |\G|^4 \log |\G| \,, 
\end{equation}
	and 
\begin{equation}\label{f:subgr_2}
	\E(\G) \ll |\G|^{49/20} \log^{1/5} |\G|\,.
\end{equation}
\end{corollary}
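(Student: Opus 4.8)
The final statement to prove is the Corollary restating, in the multiplicative subgroup setting with $|\Gamma|\le\sqrt p$, the bounds $\T_3(\Gamma)\le\mathcal T(\Gamma)\ll|\Gamma|^4\log|\Gamma|$ and $\E(\Gamma)\ll|\Gamma|^{49/20}\log^{1/5}|\Gamma|$.

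The plan is to observe that this Corollary is simply the specialization of the already-established general machinery to the case $A=\Gamma$, $M=1$, and to check that every ingredient used in the proof of Theorem \ref{thm:new_energy} (and the lemmata it rests on) has an $\F_p$-analogue valid for $|\Gamma|\le\sqrt p$.

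First, for the triple bound \eqref{f:subgr_1}: the estimate $\mathcal T(\Gamma)\ll|\Gamma|^4\log|\Gamma|$ is exactly \eqref{trip}, which follows from Theorem \ref{t:mitkin} applied in the $|\Gamma|\le\sqrt p$ subgroup setting (alternatively it is \cite[Proposition 1]{S_tripling}); and $\T_3(\Gamma)\le\mathcal T(\Gamma)$ is the content of Lemma \ref{l:brl} / Corollary \ref{brlc} with $AA=A/A=A=\Gamma$, so that $|AA/A|/|A|^2 = 1/|\Gamma|$, $|AA|=|\Gamma|$, and $\mathcal T(A/AA)=\mathcal T(AA)=\mathcal T(\Gamma)$; the factor $M^{12}$ in Corollary \ref{brlc} becomes $1$. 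One should note the identity $h_1-h_2-h_3 = (ah_1-ah_2)(a^{-1}-a^{-1}h_3/h_1) - h_2h_3/h_1$ is field-algebra and holds verbatim over $\F_p$, and that the Cauchy–Schwarz steps in the proof of Lemma \ref{l:brl} are formal, so the only analytic input is the Szemerédi–Trotter–type bound supplied by Theorem \ref{t:mitkin}.

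Second, for the energy bound \eqref{f:subgr_2}: repeat the proof of Theorem \ref{thm:new_energy} verbatim with $A=\Gamma$ and $M=1$. The inputs needed are (i) Lemma \ref{l:spectral}, whose proof is the eigenvalue/trace argument — by Remark \ref{rem:2} this is in fact cleaner for $\Gamma$, since $\mathfrak M$ and $\mathfrak R$ are circulant, so $\ve_1=\ve=\tfrac{1}{\sqrt{|\Gamma|}}(1,\dots,1)$ and one gets $\tr(\mathfrak M^2\mathfrak R)\ge|\Gamma|^{-3}\E_{3/2}^2(\Gamma)\E(\Gamma)$, then uses Hölder $\E(\Gamma)\le\E_3(\Gamma)\E_{3/2}^{2/3}(\Gamma)$; (ii) the bounds of Lemma \ref{l:E_3} in the form \eqref{b:2}, \eqref{b:31}, \eqref{b:3}, i.e. $\E_3(\Gamma)\ll|\Gamma|^3\log|\Gamma|$, $\E(\Gamma,B)\ll|\Gamma||B|^{3/2}$ for $\Gamma$-invariant $B$, and the level-set bounds — all stated in the excerpt for $|\Gamma|\le\sqrt p$; (iii) the bound $\T_3(\Gamma)\ll|\Gamma|^4\log|\Gamma|$ just established. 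One then runs the same dyadic decomposition of $\Sigma=\Sigma'+\Sigma''$ with cutoff $\tau\sim|\Gamma|^{3/5}$ to get $\Sigma\lesssim|\Gamma|^{23/5}$ as in \eqref{Sig}, and plugs $\Delta=|\Gamma|^{11/20}$ into \eqref{f:E_3_2}, arriving at $\E^6(\Gamma)\lesssim|\Gamma|^{147/10}$, i.e. $\E(\Gamma)\lesssim|\Gamma|^{49/20}$, tracking the $\log$ powers to get the exponent $1/5$.

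The one point requiring genuine care — and the main obstacle — is the application of the second inequality in \eqref{b:2}, namely $\E(\Gamma,B)\ll|\Gamma||B|^{3/2}$, inside the $\Sigma''_j$ estimate: there $B=P''_j$ is a level set of $r_{\Gamma-\Gamma}$, and one must verify it is $\Gamma$-invariant (it is, since $r_{\Gamma-\Gamma}(\lambda d)=r_{\Gamma-\Gamma}(d)$ for $\lambda\in\Gamma$ as $\Gamma d\subset\Gamma-\Gamma$ with the same multiplicity) and that the hypothesis $|\Gamma|\le\sqrt p$ — rather than merely $p^{2/3}$ — is what licenses \eqref{b:2} there; by Remark \ref{extend} this is exactly the place where $\sqrt p$ cannot be relaxed without replacing \eqref{b:2} by Lemma \ref{l:M_char'}. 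Everything else is a mechanical transcription of the real-case argument with $M=1$.
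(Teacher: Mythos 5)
Your proposal is correct and follows the paper's own route: the Corollary is indeed just the specialization of Lemma~\ref{l:brl}/Corollary~\ref{brlc} (with $AA=A/A=\Gamma$, collapsing the bound to $\T_3(\Gamma)\le\mathcal T(\Gamma)$), of estimate~\eqref{trip} via Theorem~\ref{t:mitkin}, and of Theorem~\ref{thm:new_energy} with $M=1$, which the paper already states holds for $|\Gamma|\le\sqrt p$. Your careful check that the level sets $P''_j$ are $\Gamma$-invariant (so that \eqref{b:2} applies) is exactly the one non-mechanical verification needed, and you handle it correctly.
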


To get new energy bounds for intermediate size subgroups, we apply the standard technique from the literature cited below and the following theorem \cite[Theorem 1.2]{MSS} which replaces the bound \eqref{f:subgr_1}.

\begin{theorem}
	Let  $\G\subset \F^\times_p$ be an arbitrary multiplicative subgroup. 
	Then
$$
\mathcal{T} (\G) - \frac{|\G|^6}{p}  \ll 
\left\{
\begin{array}{ll}
p^{1/2} |\G|^{7/2},& \text{if $ |\G| \ge p^{2/3}$},\\
|\G|^5 p^{-1/2} , & \text{if $p^{2/3} > |\G| \ge p^{1/2}\log p$},\\
|\G|^4 \log |\G|, & \text{if $|\G|< p^{1/2}\log p$}. 
\end{array}
\right.
$$
\label{t:T(G)_Shp}
\end{theorem}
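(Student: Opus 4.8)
\emph{Proof idea.} View $\mathcal{T}(\G)$ as the number of ordered triples of distinct collinear points of $\G\times\G\subset\F_p^2$. The first step is to reduce to a sum over non-degenerate lines. Lines through the origin meet $\G\times\G$ in at most $|\G|$ points, and at most $|\G|$ of them carry more than one such point; the same holds for axis-parallel lines; and passing from $N(N-1)(N-2)$ to $N^3$ per line costs $O\bigl(\sum_\ell|\ell\cap(\G\times\G)|^2\bigr)=O(|\G|^4)$, since every pair of points determines a unique line. All of this is $O(|\G|^4)$, which is absorbed by the claimed error in each of the three ranges (as $|\G|\le p$, and $|\G|\ge p^{1/2}$ in the middle range). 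Every remaining line is $y=kx+b$ with $k,b\in\F_p^\times$, carrying exactly $N(k,b):=|\{x\in\G:kx+b\in\G\}|$ points of $\G\times\G$, so
\[
\mathcal{T}(\G)=\sum_{k,b\in\F_p^\times}N(k,b)^3+O(|\G|^4).
\]

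Next I would isolate the main term by completion over cosets of $\G$. Writing $1_\G=\tfrac{|\G|}{p-1}\sum_\psi\psi$, where $\psi$ runs over the $m=(p-1)/|\G|$ multiplicative characters trivial on $\G$, gives $N(k,b)=\tfrac{|\G|^2}{(p-1)^2}\sum_{\psi,\psi'}\sum_{x\in\F_p}\psi(x)\psi'(kx+b)$; the inner sum equals $p-2$ when $\psi=\psi'=\chi_0$, is $O(1)$ when exactly one of $\psi,\psi'$ is trivial or when $\psi'=\bar\psi\ne\chi_0$, and is a Jacobi sum of modulus $\sqrt p$ (Weil) otherwise. So $N(k,b)=c+E(k,b)$, with $c=\tfrac{|\G|^2(p-2)}{(p-1)^2}$ and $E$ a normalised sum of at most $m^2$ Jacobi sums up to an $O(|\G|/p)$ error; moreover $\sum_{k,b}c^3=|\G|^6/p+O(|\G|^4)$ and $\sum_{k,b}E(k,b)=0$ by the choice of $c$. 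Hence
\[
\mathcal{T}(\G)-\frac{|\G|^6}{p}=3c\sum_{k,b}E(k,b)^2+\sum_{k,b}E(k,b)^3+O(|\G|^4).
\]
The second moment is elementary: $\sum_{k,b}N(k,b)^2$ counts $(k,b,x,x')$ with $x,x'\in\G$ and $kx+b,kx'+b\in\G$, and the split into $x=x'$ (giving $|\G|^2(p-2)$) and $x\ne x'$ (giving $|\G|^4-O(|\G|^3)$, since then $(x,x',kx+b,kx'+b)$ determines $(k,b)$) yields $\sum_{k,b}E^2\ll|\G|^2p$, so $3c\sum E^2\ll|\G|^4$. Everything thus reduces to the \emph{signed} cubic sum $\sum_{k,b}E(k,b)^3$.

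Two of the three ranges are now easy. For $|\G|<p^{1/2}\log p$ there is nothing more to do: $\mathcal{T}(\G)\ll|\G|^4\log|\G|$ by \eqref{trip}, and $\mathcal{T}(\G)-|\G|^6/p\le\mathcal{T}(\G)$. For $|\G|\ge p^{2/3}$ the completion also gives the pointwise bound $N(k,b)\ll|\G|^2/p+\sqrt p$, whence $\max_{k,b}|E(k,b)|\ll|\G|^2/p+\sqrt p$, and combining with $\sum E^2\ll|\G|^2p$,
\[
\Bigl|\sum_{k,b}E(k,b)^3\Bigr|\le\Bigl(\max_{k,b}|E(k,b)|\Bigr)\sum_{k,b}E(k,b)^2\ll\Bigl(\tfrac{|\G|^2}{p}+\sqrt p\Bigr)|\G|^2p\ll|\G|^4+|\G|^2p^{3/2}\ll p^{1/2}|\G|^{7/2},
\]
using $|\G|\ge p^{2/3}$ in the final step.

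The hard case is the intermediate range $p^{1/2}\log p\le|\G|<p^{2/3}$. There $\max|E|\ll\sqrt p$, but the crude bound $|\sum E^3|\le(\max|E|)\sum E^2\ll|\G|^2p^{3/2}$ overshoots the target $|\G|^5p^{-1/2}$ by the factor $p^2/|\G|^3>1$, so the cancellation among the $E(k,b)^3$ must be exploited. I would substitute the Jacobi-sum expansion of $E$ into $\sum_{k,b}E(k,b)^3$ and carry out the $k$- and $b$-summations by orthogonality: this forces the two triples of characters in the expansion to lie on the ``diagonals'' $\psi_1\psi_2\psi_3=\chi_0$ and $\eta_1\eta_2\eta_3=\chi_0$ in the structured group $\G^\perp$, reducing the quantity — up to normalising powers of $|\G|$ and $p$ — to a weighted sixth moment over $\G^\perp$ of products of three Jacobi sums. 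The main obstacle, and the technical core, is to obtain square-root-type cancellation in this sixth moment over the structured family $\G^\perp$, while separately estimating the degenerate configurations, where two of the three Jacobi sums combine into a factor $p$ and only the third is merely $\le\sqrt p$; it is precisely these configurations that produce the extra factor $|\G|$ and hence the final bound $|\G|^5p^{-1/2}$. Equivalently, after the projective normalisation $\mathcal{T}(\G)=|\G|^2\,\E^\times(\G-1)+O(|\G|^4)$, this is a sharp estimate for the multiplicative energy of the shifted subgroup $\G-1$ in the middle range.
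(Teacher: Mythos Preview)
The paper does not prove this theorem: it is quoted as \cite[Theorem~1.2]{MSS} and used as a black box. So there is no in-paper proof to compare your attempt against; I can only assess your sketch on its own merits.

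Your reduction to $\sum_{k,b\in\F_p^\times}N(k,b)^3+O(|\G|^4)$, the centring $N=c+E$ with $\sum_{k,b}E=0$, the second-moment identity $\sum_{k,b}E^2\ll|\G|^2p$, and the treatment of the two extreme ranges are all sound. For $|\G|<p^{1/2}\log p$ you invoke \eqref{trip} (strictly speaking that reference is stated for $|\G|\le\sqrt p$, but the thin strip up to $p^{1/2}\log p$ is harmless). For $|\G|\ge p^{2/3}$ the Weil pointwise bound $|E|\ll\sqrt p$ together with $\sum E^2\ll|\G|^2p$ gives $|\sum E^3|\ll|\G|^2p^{3/2}\le p^{1/2}|\G|^{7/2}$, exactly as you write.

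The genuine gap is the intermediate range $p^{1/2}\log p\le|\G|<p^{2/3}$, which you yourself flag as the hard case. Your plan --- expand $E$ in Jacobi sums, sum over $(k,b)$ by orthogonality, and bound the resulting diagonal sixth moment over $\G^\perp$ --- is a plausible line of attack, but you stop before executing it: you neither exhibit the cancellation you need nor actually control the ``degenerate configurations'' you single out as producing the extra factor $|\G|$. Observing the equivalence $\mathcal T(\G)=|\G|^2\,\E^\times(\G-1)+O(|\G|^4)$ is correct but is a reformulation, not a proof. As it stands, the middle range is not established; that is precisely the substance of \cite{MSS}.
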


\begin{remark}
	Using Theorem \ref{t:T(G)_Shp} one can sightly improve the upper bound for $\T_3 (\G)$, $\G\subseteq \Z/p^2 \Z$ in Lemma \ref{P2}; we leave this to a keen reader.
\end{remark}

The changes to estimate \eqref{f:subgr_2} we are about to address are also due to the fact that the estimates of Theorem \ref{t:T(G)_Shp} have replaced  \eqref{f:subgr_1}, and besides that the full analogue of the second inequality in \eqref{b:2} from Lemma \ref{l:E_3} (used once in the proof of \eqref{f:subgr_2}) for subgroups  $|\G| \le p^{2/3}$ is as follows.

\begin{lemma}
	Let $\G \subset \F^\times_p$ be a multiplicative subgroup, $|\G| \le p^{2/3}$.  
	Then
	and for any $\G$--invariant set $Q \subset \F^\times_p$  (i.e. $Q\G = Q$) one has 
	\begin{equation}\label{f:M_char'}
	\E(Q,\G) \ll \frac{|\G|^2 |Q|^2}{p} + |\G| |Q|^{3/2} \,.
	\end{equation}
	\label{l:M_char'}
\end{lemma}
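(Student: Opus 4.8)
\textbf{Proof proposal for Lemma \ref{l:M_char'}.}

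The plan is to estimate $\E(Q,\G) = \sum_d |Q \cap (\G + d)|^2$ by separating the contribution of $d = 0$ from that of $d \neq 0$, and then converting the latter sum into a point--line incidence count to which the Stepanov--method incidence bound, Theorem \ref{t:mitkin}, can be applied. First I would note that the $d=0$ term contributes $|Q \cap \G|^2 \le \min(|Q|,|\G|)^2 \le |\G||Q|$, which is absorbed into the second term on the right of \eqref{f:M_char'}, so it suffices to bound $\sum_{d \neq 0} |Q \cap (\G+d)|^2$.

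Next I would unfold $\sum_{d\neq 0} |Q \cap (\G+d)|^2$ as the number of quadruples $(q,q',g,g') \in Q \times Q \times \G \times \G$ with $q - g = q' - g' \neq 0$, i.e. $q - q' = g - g'$, equivalently solutions of $q - q' = g - g'$ with the common value nonzero. The key move is to exploit $\G$--invariance of $Q$: since $\G \G = \G$ and $Q\G = Q$, I can normalise. Writing $g - g' = g'(g/g' - 1)$ and dividing through, a solution of $q - q' = g - g'$ with $g' \neq 0$ corresponds to $q/g' - q'/g' = g/g' - 1$; setting $u = q/g' \in Q$, $v = q'/g' \in Q$ (using $Q\G^{-1} = Q$) and $\gamma = g/g' \in \G$, we get $u - v = \gamma - 1$, i.e. for each such quadruple we obtain a representation, with a multiplicity of exactly $|\G|$ coming from the choice of $g'$ once $\gamma$ is fixed (as $g = \gamma g'$). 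So $\sum_{d \neq 0}|Q \cap (\G+d)|^2 = |\G| \cdot |\{(u,v,\gamma) \in Q\times Q \times (\G \setminus\{1\}) : u - v = \gamma - 1\}|$ — or a harmless variant thereof; the right bookkeeping of the multiplicity is the first thing to get right. Now the inner count is, for each fixed $\gamma \neq 1$, the number of $(u,v) \in Q\times Q$ with $u - v = \gamma - 1$, and summing over $\gamma \in \G$ this is the number of solutions of $u - v = w$ with $u, v \in Q$, $w \in \G - 1$; but $\G-1$ has the structure that lets us recast $u-v \in \G - 1$ as an equation $u - v + 1 \in \G$, i.e. $(u+1) - v \in \G$. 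The cleaner route, matching how Theorem \ref{t:mitkin} is phrased, is to interpret the whole thing projectively: solutions of $ux + vy = 1$ type equations with $x,y \in \G$ and $(u,v)$ ranging over a set $\Theta$ of cosets determined by $Q$. I would therefore dualise, writing the count as $\sum_{(u,v)\in\Theta}|\{(x,y)\in\G\times\G: ux+vy=1\}|$ for an appropriate $\Theta$ built from $Q$ with $|\Theta| \approx |Q|/|\G|$ (here $\G$--invariance of $Q$ is exactly what makes $Q$ a union of roughly $|Q|/|\G|$ cosets, so $\Theta$ lives in $\F_p^\times/\G \times \F_p^\times/\G$ and has the right size), and then apply \eqref{eq_t:mitkin} to get a bound $\ll (|\G||\Theta|)^{2/3} \approx (|Q||\G|^{?})^{2/3}$, tracking exponents so that after multiplying back by the factor $|\G|$ from the normalisation one lands on $|\G||Q|^{3/2}$. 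The main-term subtraction $|\G|^6/p$ in Theorem \ref{t:T(G)_Shp} has its analogue here as the $|\G|^2|Q|^2/p$ term: it arises because the incidence bound \eqref{eq_t:mitkin} requires the smallness hypotheses $|\G|^4|\Theta| < p^3$ and $|\Theta| \le 33^{-3}|\G|^2$, and when $Q$ (hence $\Theta$) is too large relative to $p$ one must instead use the trivial/$L^1$ count — giving the random-like main term $|\G|^2|Q|^2/p$ — so the final bound is the maximum of the two regimes, written as a sum.

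The step I expect to be the main obstacle is the precise reduction of $\E(Q,\G)$ to the incidence sum $\sum_{(u,v)\in\Theta}|\{(x,y)\in\G\times\G:ux+vy=1\}|$ together with verifying that the resulting $\Theta$ genuinely satisfies the hypotheses $|\G|^4|\Theta| < p^3$ and $|\Theta|\le 33^{-3}|\G|^2$ of Theorem \ref{t:mitkin} in the range $|\G|\le p^{2/3}$ — and, where it does not, correctly splitting off the $|\G|^2|Q|^2/p$ term. This is the part where the $\G$--invariance of $Q$ must be used decisively and where an off-by-a-power-of-$|\G|$ error in the normalisation multiplicity would break the claimed exponent $3/2$; everything else (the $d=0$ term, the H\"older/Cauchy--Schwarz bookkeeping, summing the two regimes) is routine. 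I would model the argument closely on the proof of the $|\G|=O(\sqrt p)$ case, i.e. the second inequality in \eqref{b:2}, which is the special case where the $|\G|^2|Q|^2/p$ term is dominated and can be dropped, and on the standard treatment of such sums in \cite{SV}, \cite{Ilmed}.
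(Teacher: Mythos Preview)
The paper does not actually prove this lemma: it is stated and then deferred to the references \cite{SS1}, \cite{S_ineq}, with a remark that the shape of the second term matches the point--plane incidence theorem of \cite{misha}. So there is no in-text proof to compare against, and your sketch is not in conflict with anything the authors do.

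That said, your outline has a genuine gap at the step you yourself flag. After your normalisation $q_1/g_2 - q_2/g_2 = g_1/g_2 - 1$ you obtain
\[
\E(Q,\G)=|\G|\cdot|\{(u,v,\gamma)\in Q\times Q\times\G:\;u-v=\gamma-1\}|,
\]
and this inner count now carries only \emph{one} $\G$-variable, not two. It is therefore not of the form $\sum_{(u,v)\in\Theta}|\{(x,y)\in\G\times\G:ux+vy=1\}|$, and Theorem~\ref{t:mitkin} does not apply to it directly. If instead you interpret your $|\Theta|\approx|Q|/|\G|$ as coming from the per-$d$ decomposition $|Q\cap(\G+d)|=\sum_{i}|\{(x,y)\in\G^2:(c_i/d)x+(-1/d)y=1\}|$ (writing $Q=\bigcup_i c_i\G$), Theorem~\ref{t:mitkin} yields only $|Q\cap(\G+d)|\ll|Q|^{2/3}$, and feeding this into $\sum_d r^2$ gives $|\G||Q|^{5/3}$, not $|\G||Q|^{3/2}$.

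What actually works --- and what the references you cite at the end do --- is the level-set argument you allude to when invoking the proof of \eqref{b:2}. One observes that $D_\tau:=\{d\neq0:\;r_{Q-\G}(d)\ge\tau\}$ is itself $\G$-invariant (because both $Q$ and $\G$ are), writes
\[
\tau|D_\tau|\le\sum_{d\in D_\tau}r_{Q-\G}(d)=|\G|\sum_{(u,v)\in\Theta}|\{(x,y)\in\G^2:ux+vy=1\}|
\]
with $\Theta$ indexed by pairs of cosets from $Q$ and $D_\tau$, so $|\Theta|=|Q||D_\tau|/|\G|^2$, and then Theorem~\ref{t:mitkin} gives $|D_\tau|\ll|\G||Q|^2/\tau^3$. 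Splitting $\E(Q,\G)$ at $\tau\sim|Q|^{1/2}$ produces $|\G||Q|^{3/2}$. The term $|\G|^2|Q|^2/p$ is, as you correctly say, the expected-value (Fourier $\xi=0$) contribution; it takes over precisely when $|\G||Q|^{1/2}\gtrsim p$, which is exactly where the size hypothesis $|\G|^4|\Theta|<p^3$ of Theorem~\ref{t:mitkin} begins to fail. So your instinct about where the two regimes meet is right; only the dualisation step needs to be rerouted through the level-set scheme rather than applied to the once-normalised count.
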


Namely for $|\Gamma|=\Omega(p^{1/2})$, one has to add to the second estimate in \ref{b:2} the ``statistical average'', see, e.g., \cite{SS1} or \cite{S_ineq}. Note that the format of second term in  \eqref{f:M_char'}
complies with using the point-plane theorem of \cite{misha}, see also \cite{RRS}. 

\bigskip

We now present the new energy bound, improving the bound $\E(\G) \ll |\G|^{5/2}$ (sharp for $|\G| \sim p^{2/3}$)
for subgroups with 
$|\G| \lesssim p^{5/8}$.

\begin{theorem}
		Let  $\Gamma \subseteq \F^\times_p$ be a multiplicative subgroup, with $p^{1/2} \le |\G| \le p^{2/3}$. 
	Then
	\begin{equation}\label{f:G_large}
	\E(\G) \ll \log^{1/4} |\G| \cdot \max \left\{ \left( \frac{|\G|^{104}}{p^3} \right)^{1/40},\; \left( \frac{|\G|^{68}}{p^5} \right)^{1/24} \right\}\,.
	\end{equation}
\label{t:G_large}
\end{theorem}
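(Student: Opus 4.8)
\textbf{Proof plan for Theorem \ref{t:G_large}.}
The plan is to mimic, essentially verbatim, the proof of Theorem \ref{thm:new_energy}, but feeding in the sharper ``intermediate range'' inputs: the bound of Theorem \ref{t:T(G)_Shp} in place of $\mathcal T(A)\ll|A|^4\log|A|$ from \eqref{trip}, and Lemma \ref{l:M_char'} in place of the naive $\E(\G,Q)\ll|\G||Q|^{3/2}$ of \eqref{b:2}. As noted in Remark \ref{extend}, the remaining inputs from Lemma \ref{l:E_3}, namely $\E_3(\G)\ll|\G|^3\log|\G|$ and the level-set bounds \eqref{b:31}, \eqref{b:3}, as well as Corollary \ref{brlc} giving $\T_3(\G)\lesssim|\G|^4$ (via $\mathcal T(\G)$, using the $|\G|<p^{1/2}\log p$ branch of Theorem \ref{t:T(G)_Shp} is not needed here since $|\G|\ge p^{1/2}$ — instead we use the middle branch $\mathcal T(\G)-|\G|^6/p\ll|\G|^5 p^{-1/2}$) all remain valid for $|\G|\le p^{2/3}$. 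So first I would invoke Lemma \ref{l:spectral} with a cutoff $\Delta$ to be optimized, split $\E(\G)=\E'+\E''$, and dispose of the case $\E''\gg\E(\G)$ using \eqref{b:3}: this gives $\E(\G)\ll|\G|^3/\Delta$. In the main case $\E'\gg\E(\G)$ I would estimate $\Sigma$ from \eqref{sigma}.

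The heart of the matter is re-running the $\Sigma=\Sigma'+\Sigma''$ dichotomy with a cutoff $\tau$. For $\Sigma'$ (poor differences, $r_{\G-\G}(d)\le\tau$) one gets $\Sigma'\le\tau\,\T_3(\G)\lesssim\tau(\mathcal T(\G))\lesssim\tau(|\G|^4+|\G|^5/p^{1/2}\cdot\text{correction})$; in the regime $p^{1/2}\le|\G|\le p^{2/3}$ one has $|\G|^6/p\le|\G|^5 p^{-1/2}$, so effectively $\T_3(\G)\lesssim|\G|^5 p^{-1/2}+|\G|^4$, and I would carry both terms. For $\Sigma''$ (rich differences) I would dyadically decompose $P''_j=\{d:2^{j-1}\tau\le r_{\G-\G}(d)<2^j\tau\}$, use $|P''_j|\ll|\G|^3/\tau_j^3$ from \eqref{b:31}, apply Cauchy--Schwarz to bound $\Sigma''_j\le\tau_j^2\sqrt{\E(\G,P''_j)}\sqrt{\T_3(\G)}$, and now substitute Lemma \ref{l:M_char'}: $\E(P''_j,\G)\ll|\G|^2|P''_j|^2/p+|\G||P''_j|^{3/2}\ll|\G|^2|\G|^6 p^{-1}\tau_j^{-6}+|\G|\cdot|\G|^{9/2}\tau_j^{-9/2}$. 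The geometric series in $j$ converges (each term has a negative power of $\tau_j$), so $\Sigma''\lesssim\tau^2\sqrt{\T_3(\G)}\big(|\G|^4 p^{-1/2}\tau^{-3}+|\G|^{11/4}\tau^{-9/4}\big)$, and with $\sqrt{\T_3(\G)}\lesssim|\G|^{5/2}p^{-1/4}+|\G|^2$ this produces a handful of monomials in $|\G|,p,\tau$.

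The bookkeeping step — and the main obstacle — is the optimization: one must choose $\tau$ to balance $\Sigma'$ against $\Sigma''$, then feed $\Sigma\lesssim(\text{result})$ together with $\E_3(\G)\lesssim|\G|^3$ into \eqref{f:E_3_2}, obtaining $\E'^6\lesssim|\G|^6\,|\G|^3\,\Delta^2\,\Sigma$, and finally choose $\Delta$ to balance this against the trivial bound $\E(\G)\ll|\G|^3/\Delta$ from the $\E''$ case. Because $\T_3(\G)$, $\E(P''_j,\G)$, and hence $\Sigma$ are each a sum of two competing monomials, the whole computation naturally breaks into two regimes according to which term dominates, and the final answer is the maximum of the two resulting bounds — which is exactly the shape $\max\{(|\G|^{104}/p^3)^{1/40},(|\G|^{68}/p^5)^{1/24}\}$ in \eqref{f:G_large}. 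Concretely: in the regime where the ``$p$-corrected'' terms win one should land near $\tau\sim(|\G|^{?}/p^{?})^{?}$ and $\Delta\sim(\cdots)$ giving the first term; where the classical terms win one recovers, up to the $p$-free part, the mechanism of Theorem \ref{thm:new_energy} and gets the second term. I would then check that at $|\G|\sim p^{2/3}$ both expressions are $\ll|\G|^{5/2}$ (consistency with the known sharp bound) and that the stated threshold $|\G|\lesssim p^{5/8}$ is where \eqref{f:G_large} first beats $|\G|^{5/2}$. The only genuinely delicate point is making sure no intermediate step silently assumes $|\G|\le\sqrt p$ rather than $|\G|\le p^{2/3}$; Remark \ref{extend} and Lemma \ref{l:M_char'} are precisely what license the extension, and the choices of $\tau,\Delta$ must be verified to stay in the admissible ranges $1\le\Delta\le|\G|$ and $\tau\le|\G|$ throughout the stated window $p^{1/2}\le|\G|\le p^{2/3}$.
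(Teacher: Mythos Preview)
Your overall plan is exactly the paper's: rerun the proof of Theorem \ref{thm:new_energy}, replacing the two inputs --- the collinear-triples bound via Theorem \ref{t:T(G)_Shp} and the energy bound via Lemma \ref{l:M_char'} --- and let the two alternatives of \eqref{f:M_char'} produce the two terms of the max. So strategically you are aligned with the paper.

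There is, however, a concrete arithmetic slip that would derail the numerics. You write that in the range $p^{1/2}\le|\G|\le p^{2/3}$ one has $|\G|^6/p\le|\G|^5 p^{-1/2}$; the inequality goes the \emph{other} way (it is equivalent to $|\G|\le p^{1/2}$). Hence the main term in Theorem \ref{t:T(G)_Shp} is $|\G|^6/p$, and the paper uses $\mathcal T(\G)\ll L\,|\G|^6/p$ throughout --- there is no competing $|\G|^5 p^{-1/2}$ or $|\G|^4$ contribution to carry from $\T_3(\G)$. With your (understated) value $\T_3(\G)\lesssim|\G|^5 p^{-1/2}$ the optimization in $\tau$ and $\Delta$ will not reproduce the exponents $104/40$ and $68/24$.

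Once you correct this, the bifurcation is cleaner than you describe: the single bound $\T_3(\G)\ll L|\G|^6/p$ is fixed, and the two outcomes in \eqref{f:G_large} come \emph{solely} from whether the first or second summand of \eqref{f:M_char'} dominates $\E(\G,P''_j)$. When the second (``classical'') term $|\G||P''_j|^{3/2}$ wins, the argument is literally that of Theorem \ref{thm:new_energy} with $|\G|^4$ replaced by $|\G|^6/p$, and one lands on $(|\G|^{104}/p^3)^{1/40}$. When the first term $|\G|^2|P''_j|^2/p$ wins, the paper chooses $\tau=|\G|^{5/3}p^{-2/3}$ and, using the $\E_{3/2}$ variant of Remark \ref{rem:2} (which is legitimate for subgroups), arrives at $(|\G|^{68}/p^5)^{1/24}$.
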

{\it Outline of the proof.~}
Set $L=\log |\G|$. 
We repeat the arguments of the proof of Theorem \ref{thm:new_energy}, estimating $\mathcal{T}(\G)$ as $\mathcal{T}(\G) \ll L |\G|^6/p$, according to Theorem \ref{t:T(G)_Shp}. 
To bound energies $\E(P''_j,\G)$ which appear in the proof, see estimate \eqref{pej} and argument following it,
we use the estimate  \eqref{f:M_char'} of Lemma \ref{l:M_char'}. If the second term in the application of (\ref{f:M_char'}) dominates, we literally repeat the proof of Theorem \ref{thm:new_energy}, obtaining $$\E(\G) \ll \log^{1/5} |\G| \cdot \left( \frac{|\G|^{104}}{p^3} \right)^{1/40}.$$ Observe that owing to Remark \ref{extend} the estimates of Lemma \ref{l:E_3} on $\E_3(\Gamma)$, as well \eqref{pej} reman valid. 

It is easy to see that apart from the above estimate on $\mathcal{T}(\G)$, the alternative case of the estimate of Lemma \ref{l:M_char'} is the only modification to the proof of Theorem \ref{thm:new_energy} required. A straightforward calculation  leads to choosing in the later case the value of the parameter $\tau$ in the proof of Theorem \ref{thm:new_energy} as  $\tau =  |\G|^{5/3} p^{-2/3}$, which then yields the  inequality
$$
	\E_{3/2}^4(\G) \E^2 (\G) \ll L |\G|^9 \mathcal{T}(\G) \cdot \tau \ll  L^2 |\G|^9 \frac{|\G|^6}{p} \cdot \tau \,.
$$
Substituting $\tau =  |\G|^{5/3} p^{-2/3}$  and using the H\"older inequality to get rid of $\E_{3/2}^4(\G),$ see Remark \ref{rem:2}, we have
$$
	\E^6 (\G) \ll L^2 |\G|^{15} p^{-1} (|\G|^{5/3} / p^{2/3}) \cdot (|\G|^3/\E(\G))^2 \,.
$$
and therefore
$$
	\E (\G) \ll L^{1/4} \cdot \left( \frac{|\G|^{68}}{p^5} \right)^{1/24} \,.
$$
\bigskip 

Bound (\ref{f:G_large})
is better than the previously best known one in \cite[Theorem 8]{S_ineq} for subgroups of size 
$p^{1/2} \le |\G| \lesssim p^{4/7}$. 

\subsection{On the greatest distance between the adjacent elements of cosets of a subgroup.}	
\label{subsec:distance}

Following Bourgain, Konyagin and Shparlinski  \cite{bur3}, we introduce, for a multiplicative subgroup $\G \subseteq \F^\times_p$ of order $t$,  the maximum gap $H_{p}(t)$ between elements  of cosets of $\Gamma$, as follows:

$$H_{p}(t)= \max\{H: \exists a \in \mathbb \F_{p}^{*}, 
\exists u \in \mathbb \F_{p}, 1 \leq j \leq H:
 u+j \in \mathbb \F_{p} \setminus a\Gamma  \}.$$

\medskip
In \cite[Theorem 3]{bur3}  the following bound was established.

\begin{theorem}
	For $t \geq p^{1/2}$, one has
$$
H_{p}(t) \leq p^{463/504 + o(1)}, \;p \rightarrow \infty .
$$ 

\end{theorem}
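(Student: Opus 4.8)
The inequality is \cite[Theorem 3]{bur3}, so one may simply invoke that reference; I sketch below the route one would reconstruct, both because it is instructive and because it isolates the place where the new estimates of this paper enter to sharpen the exponent in Theorem \ref{gap}.

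\emph{Reduction to an exponential sum estimate.} Fix a coset $a\G$ and an interval $I=\{u+1,\dots,u+H\}$ with $H=p^{463/504+o(1)}$; it suffices to prove $|a\G\cap I|>0$. Writing $\mathbf 1_I$ for the indicator of $I$ in $\F_p$, $e_p(x)=e^{2\pi i x/p}$, $S_\G(m)=\sum_{g\in\G}e_p(mg)$ and $\widehat{\mathbf 1_I}(c)=\sum_{y\in I}e_p(-cy)$, completion of the sum gives
$$
|a\G\cap I|=\sum_{g\in\G}\mathbf 1_I(ag)=\frac{H|\G|}{p}+\frac1p\sum_{c\neq0}\widehat{\mathbf 1_I}(c)\,S_\G(ca)\,.
$$
Since $|\widehat{\mathbf 1_I}(c)|\le\min\{H,\ p/(2\|c/p\|)\}$, where $\|\cdot\|$ denotes distance to the nearest integer, one has $\sum_{c\neq0}|\widehat{\mathbf 1_I}(c)|\ll p\log p$, and because $|\G|\ge p^{1/2}$ the main term $H|\G|/p$ is a fixed positive power of $p$. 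Hence everything reduces to controlling the error term, which comes down to a pointwise bound on $\max_{m\neq0}|S_\G(m)|$ and, to extract the precise exponent, to a dyadic decomposition of the $c$--sum according to the size of $|S_\G(ca)|$.

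\emph{Moment bounds for $\G$.} The value $S_\G(m)$ depends only on the coset $m\G$, so the standard identity $\sum_{m\neq0}|S_\G(m)|^{2k}=p\,\T_k(\G)-|\G|^{2k}$ (notation \eqref{tkdef}) turns any upper bound on the higher additive moment $\T_k(\G)$ into a bound on how many residues $c$ can have $|S_\G(ca)|$ of a given size, and in particular $\max_{m\neq0}|S_\G(m)|\le\big(p\,\T_k(\G)/|\G|\big)^{1/2k}$. Feeding in the classical $\E(\G)=\T_2(\G)\ll|\G|^{5/2}$ of \cite{H-K,K_Tula} for $|\G|\le p^{2/3}$, its ``trivial plus statistical'' counterpart $\E(\G)\ll|\G|^4/p+\dots$ for larger $\G$, and the associated higher--moment estimates obtained by convolving these bounds with themselves, then balancing the dyadic pieces of $\tfrac1p\sum_c|\widehat{\mathbf 1_I}(c)|\,|S_\G(ca)|$ against $\sum_c|\widehat{\mathbf 1_I}(c)|\ll p\log p$ and optimising over $k$ and the cutoffs, one finds the error term to be $o(H|\G|/p)$ once $H\ge p^{463/504+o(1)}$, uniformly for all $|\G|\ge p^{1/2}$. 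This is the claimed inequality.

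\emph{Main obstacle.} The bottleneck is not the Fourier bookkeeping but the strength of the pointwise exponential sum bound for $\G$ in the full range $|\G|\ge p^{1/2}$: the crude deduction $\max\le(\mathrm{moment})^{1/2k}$ is wasteful, the regime $|\G|$ just above $p^{1/2}$ is the least favourable one and controls the final exponent, and one must combine the moment inequalities with either sum--product/incidence input or the refined dyadic balancing above. It is precisely here that the present paper improves matters: replacing the moment estimates by the sharper bounds on $\mathcal T(\G)$ and $\T_3(\G)$ in Theorem \ref{t:T(G)_Shp} and Corollary \ref{brlc}, and the energy bound \eqref{f:subgr_2}, feeds through the same scheme and lowers the exponent $463/504$ — this being the content of Theorem \ref{gap}.
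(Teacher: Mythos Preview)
You are right that the statement is simply \cite[Theorem 3]{bur3}; the paper quotes it and does not prove it, so invoking the reference is the complete argument in context.

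Your sketched reconstruction, however, is not the route of \cite{bur3} (which is also the template the present paper reuses for Theorem \ref{gap}). The actual argument is not a plain completion of the sum $|a\G\cap I|=H|\G|/p+\text{error}$ followed by dyadic slicing of $|S_\G|$. Instead one invokes \cite[Lemma 7.1]{KS1}: if $\sum_{j} N_{j,t}(h)\,|S_{j+k}(t)|\le 0.5\,t$ for all $k$, then $H_p(t)\ll p^{1+\varepsilon}/h$, where $N_{j,t}(h)$ counts coset elements $\G_j\cap[-h,h]$. One then H\"older-splits this sum as $(\sum_j N_{j,t}(h))^{1/2}(\sum_j N_{j,t}(h)^2)^{1/4}(\sum_j |S_j(t)|^4)^{1/4}$; the first factor is $2h$, the third is controlled by $\E(\G)$ via \eqref{ort}, and the decisive middle factor $N(\G,h)=\sum_j N_{j,t}(h)^2$ counts solutions of $ux\equiv y\pmod p$ with $|x|,|y|\le h$, $u\in\G$, bounded by \cite[Theorem 1]{bur3} (Theorem \ref{subgr_int} here). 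The exponent $463/504$ comes from optimising $h$ and the parameter $\nu$ against these three inputs.

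Your sketch omits the $N(\G,h)$ ingredient entirely, and without it one does not reach $463/504$. For instance, at $|\G|\sim p^{1/2}$ the crude bound $\max_{m\neq0}|S_\G(m)|\le(p\,\E(\G)/|\G|)^{1/4}$ with $\E(\G)\ll|\G|^{5/2}$ feeds into your error term to give only $H_p(t)\le p^{15/16+o(1)}$, and dyadic rebalancing over $|S_\G|$ alone does not recover the gap. Your closing paragraph correctly identifies where the present paper's new inputs \eqref{f:subgr_2} and Theorem \ref{t:T(G)_Shp} enter, but they enter through the $N_{j,t}(h)$ framework, not through raw completion.
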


The case $t \geq p^{1/2}$ is important, because for any $g>1$  and for almost all $p$   the subgroup generated by powers of $g$ 
has cardinality at least $p^{1/2}$, see \cite{pap}. The distribution of the elements of this  subgroup is closely related to the distribution of digits of $1/p$ in base $g$.

We use the symbol $o(1)$ in this section to subsume terms which are smaller than any power of $p$, most of these terms come from the forthcoming quote of \cite[Theorem 1]{bur3} as Theorem \ref{subgr_int} here, to be used as a black box.

The above exponent $\frac{463}{504}$ was  improved to $\frac{5977}{6552}$ in \cite{Iur}.
New estimates for additive energy of multiplicative subgroups allow for further improvement, as follows.

\begin{theorem}\label{gap}
	For $t \geq p^{1/2}$, one has
$$
H_{p}(t) \leq p^{\frac{437}{480} + o(1)}, \; p \rightarrow \infty .
$$ 

\end{theorem}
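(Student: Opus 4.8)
The plan is to combine the new energy bound \eqref{f:subgr_2} (equivalently Theorem \ref{thm:new_energy} and its intermediate-range cousin Theorem \ref{t:G_large}) with the machinery of Bourgain--Konyagin--Shparlinski \cite{bur3}, exactly in the way the previous records $\tfrac{463}{504}$ and $\tfrac{5977}{6552}$ were obtained, but feeding in the sharper input. First I would recall the black-box reduction from \cite[Theorem 1]{bur3} (quoted here as Theorem \ref{subgr_int}): a gap of length $H$ in a coset $a\Gamma$ forces, via a smoothing/exponential-sum argument, a lower bound on a certain moment of the Gauss-sum type object attached to $\Gamma$, which in turn is controlled from above by the additive energy $\E(\Gamma)$ together with Fourier/incidence estimates. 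Concretely, the existence of such a gap yields an inequality of the shape $H^{a} \lesssim p^{b}\, \E(\Gamma)^{c}\,|\Gamma|^{d}\cdot p^{o(1)}$ for explicit rational $a,b,c,d$; one then optimizes over the free parameters in the argument (the length scales in the smoothing and the dyadic ranges of $\Gamma$ relative to $p$).

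The key steps, in order, are: (i) split according to the size of $t=|\Gamma|$ relative to $p$ --- the regime $p^{1/2}\le t\lesssim p^{5/8}$ where Theorem \ref{t:G_large} gives $\E(\Gamma)\ll p^{o(1)}\max\{(|\Gamma|^{104}/p^{3})^{1/40},(|\Gamma|^{68}/p^{5})^{1/24}\}$, the regime $p^{5/8}\lesssim t\lesssim p^{2/3}$ where the classical $\E(\Gamma)\ll |\Gamma|^{5/2}$ is still the best available, and the regime $t\gtrsim p^{2/3}$ where again $\E(\Gamma)\ll|\Gamma|^{5/2}$ is sharp; (ii) in each regime substitute the corresponding energy bound into the \cite{bur3} inequality; (iii) since the right-hand side is monotone in $t$ in each piece while the constraint $t\ge p^{1/2}$ pushes $t$ down, evaluate at the worst admissible $t$ in each subrange, i.e. at the endpoints $t=p^{1/2}$, $t=p^{5/8}$, $t=p^{2/3}$ (this is why the improvement over \cite{Iur} comes precisely from the small-subgroup energy gain near $t=p^{1/2}$); (iv) take the maximum of the resulting exponents over all pieces and simplify the arithmetic to arrive at $H_p(t)\le p^{437/480+o(1)}$. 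One should double-check that the crossover points between the two branches of \eqref{f:G_large} and between \eqref{f:G_large} and $|\Gamma|^{5/2}$ are handled correctly, and that the $o(1)$ from \cite{bur3} genuinely absorbs all logarithmic factors picked up along the way.

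The main obstacle I anticipate is purely bookkeeping rather than conceptual: carefully tracking which of the several upper bounds for $\E(\Gamma)$ is dominant as $t$ ranges over $[p^{1/2},p^{2/3}]$ and beyond, and then doing the constrained optimization in the \cite{bur3} estimate so that the exponents line up to the claimed $\tfrac{437}{480}$ --- a number small enough that an off-by-a-little slip in any of the rational exponents would be visible. A secondary point to be careful about is that Theorem \ref{t:G_large} is only stated for $p^{1/2}\le|\Gamma|\le p^{2/3}$, so for larger subgroups one must revert to the sharp $\E(\Gamma)\ll|\Gamma|^{5/2}$ and verify that this regime does not become the bottleneck (it does not, since there the previous argument already gave a better exponent, the improvement of \cite{bur3}/\cite{Iur} over the trivial bound being driven by the mid-size range). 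Modulo these calculations, the proof is a direct substitution, so I would present it as: quote \cite[Theorem 1]{bur3} as a lemma, insert \eqref{f:subgr_2}/\eqref{f:G_large}/$|\Gamma|^{5/2}$ piecewise, optimize, and collect the exponent.
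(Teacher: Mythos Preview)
Your high-level plan---feed the new energy bounds into the Bourgain--Konyagin--Shparlinski framework and optimise---is exactly what the paper does, and you correctly identify that the bottleneck sits at $t=p^{1/2}$. But your description of the ``black box'' is too schematic and slightly misattributed. What is quoted here as Theorem~\ref{subgr_int} is \emph{not} the reduction from gaps to exponential sums; it is the upper bound on $N(\G,h):=\sum_j N_{j,t}(h)^2$. The actual criterion (from \cite[Lemma 7.1]{KS1}) says $H_p(t)\ll p^{1+\varepsilon}/h$ once $\sum_j N_{j,t}(h)|S_{j+k}(t)|\le t/2$ for all $k$; the paper bounds this sum by H\"older as $(2h)^{1/2} N(\G,h)^{1/4}\bigl(\sum_j|S_j(t)|^4\bigr)^{1/4}$, uses orthogonality $\sum_j|S_j(t)|^4<(p/t)\E(\G)$, and invokes Theorem~\ref{subgr_int} with the specific choice $\nu=6$. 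So there is an extra free integer parameter $\nu$ hidden inside your ``$H^a\lesssim p^b\E(\G)^c t^d$'', and getting $\tfrac{437}{480}$ requires picking it.

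The paper's case split also differs from yours. It breaks only at $t=p^{4/7}$ (where the two branches of \eqref{f:G_large} cross), not at $p^{5/8}$ and $p^{2/3}$. For $t\in[p^{1/2},p^{4/7}]$ it uses the first branch of Theorem~\ref{t:G_large} together with $\nu=6$; for $t>p^{4/7}$ it \emph{abandons the energy route entirely} and instead applies the pointwise Gauss-sum bound $\max_j|S_j(t)|\le p^{1/6+o(1)}t^{1/2}$ from \cite{Ilmed}, estimating $\sum_j N_{j,t}(h)|S_{j+k}(t)|$ trivially by $2h\max_j|S_j(t)|$. Your alternative of sticking with energy throughout and using $\E(\G)\ll t^{5/2}$ above $p^{5/8}$ would also close, since neither of these ranges is the bottleneck, but it is not how the paper proceeds. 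Setting $h=p^{43/480-\varepsilon}$ and checking both cases at their worst endpoints yields $H_p(t)\le p^{437/480+o(1)}$.
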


Before proving Theorem \ref{gap}, let  us  introduce several auxiliary quantities. Let $g$ be the primitive root  of $\mathbb F^\times_{p}.$ $\G\subseteq \F^\times_{p}$, as we said,  is a multiplicative subgroup of order $t$; set $n=(p-1)/t$. 
Also let $\G_{j}:=g^{j}\G$ and 
$$ 
S_{j}(t):= S(g^{j}, \G)=\sum_{x \in \G}e_{p}(g^{j}x); \quad \quad 
N_{j,t}(h):=|\{1 \leq |u|  \leq h : u \in \G_j \}|,
$$
where $e_p$ is the canonical additive character.

The quantities $H_{p}(t), N_{j,t}(h)$ and $S_{j}(t)$ are related via following statement \cite[Lemma 7.1]{KS1}.

\begin{theorem}
If	for some $h\geq 1$ the inequality :
$$
\sum_{1 \leq j \leq n}N_{j,t}(h)|S_{j+k}(t)| \leq 0.5 t 
$$ 
holds for all $k=1, \ldots , n$, then for any $\varepsilon>0$ 
$$
H_{p}(t) \ll p^{1+\varepsilon}h^{-1}.
$$
\end{theorem}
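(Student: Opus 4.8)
\emph{Overall approach.} This is a standard ``equidistribution $\Rightarrow$ small gaps'' implication. The plan is, first, to decode the hypothesis as a bound on the low-frequency Fourier mass of the indicator of an arbitrary coset of $\G$, and then to run a completing-the-sum argument against a nonnegative weight supported on the hypothetical gap. For the first step, note that if a nonzero residue $v$ lies in $\G_j=g^j\G$, then writing $v=g^j\gamma$ with $\gamma\in\G$ and using that $\gamma^{-1}x$ runs over $\G$ as $x$ does, we get
$$
S_{j+k}(t)=\sum_{x\in\G}e_p\!\big(g^{j+k}x\big)=\sum_{x\in\G}e_p\!\big(g^kv x\big)=S\big(g^kv,\G\big),
$$
which depends on $v$ and $k$ only. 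Splitting the $2h$ nonzero residues with $1\le|v|\le h$ according to the coset of $\G$ containing them, and writing $\widehat f(c)=\sum_x f(x)e_p(-cx)$, we obtain
$$
\sum_{1\le j\le n}N_{j,t}(h)\,|S_{j+k}(t)|=\sum_{1\le|v|\le h}\big|S(g^kv,\G)\big|=\sum_{1\le|v|\le h}\big|\widehat{\mathbf{1}_{g^k\G}}(v)\big|.
$$
As $k$ runs over $1,\dots,n$ the cosets $g^k\G$ run over all $n$ cosets of $\G$, so the hypothesis says exactly: for every coset $a\G$ one has $\sum_{1\le|v|\le h}|\widehat{\mathbf{1}_{a\G}}(v)|\le t/2$.

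\emph{The bump and the set-up.} Put $L:=2p^{1+\varepsilon}h^{-1}$; we may assume $h<p/2$, $L\le p$ and $p$ large, else there is nothing to prove. Suppose, for contradiction, that $H_p(t)\ge L$: then some coset $a\G$ is disjoint from an interval $I$ of $L$ consecutive residues. Fix an integer $m$ with $m\asymp 1/\varepsilon$ and let $\phi$ be a translate, supported inside $I$, of the $m$-fold additive convolution of the indicator of an interval of length $\ell:=\lfloor L/m\rfloor$. Then $\phi\ge 0$, $\widehat\phi(0)=\ell^{m}$, $|\widehat\phi(c)|\le\widehat\phi(0)$ for all $c$, and, using $|\widehat{\mathbf{1}_{[0,\ell)}}(c)|\le\tfrac12\|c/p\|^{-1}\le p/(2|c|)$ for $0<|c|\le p/2$, one gets for $|c|>h$
$$
\frac{|\widehat\phi(c)|}{\widehat\phi(0)}\le\Big(\frac{p}{2h\ell}\Big)^{m}\ll\Big(\frac{m}{p^{\varepsilon}}\Big)^{m}\ll p^{-5},
$$
once $m$ is fixed large enough in terms of $\varepsilon$ and $p$ is large. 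Since $\phi$ is supported in $I$ and $I\cap a\G=\emptyset$, we have $\sum_{x\in a\G}\phi(x)=0$.

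\emph{Parseval and the contradiction.} Expanding the same sum on the Fourier side,
$$
0=\sum_{x\in a\G}\phi(x)=\frac1p\sum_{c}\widehat{\mathbf{1}_{a\G}}(c)\,\overline{\widehat\phi(c)}=\frac{t\,\widehat\phi(0)}{p}+\frac1p\sum_{c\ne 0}\widehat{\mathbf{1}_{a\G}}(c)\,\overline{\widehat\phi(c)}.
$$
The part of the last sum with $1\le|c|\le h$ is at most $p^{-1}\widehat\phi(0)\sum_{1\le|c|\le h}|\widehat{\mathbf{1}_{a\G}}(c)|\le p^{-1}\widehat\phi(0)\,t/2$ by Step~1; the part with $|c|>h$ is at most $p^{-1}\big(\max_{|c|>h}|\widehat\phi(c)|\big)\sum_c|\widehat{\mathbf{1}_{a\G}}(c)|\ll p^{-1}\widehat\phi(0)p^{-5}\cdot p\sqrt t$, where $\sum_c|\widehat{\mathbf{1}_{a\G}}(c)|\le(p\cdot pt)^{1/2}$ by Cauchy--Schwarz and Parseval. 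Hence the error term has absolute value strictly smaller than the main term $p^{-1}t\widehat\phi(0)>0$, forcing $\sum_{x\in a\G}\phi(x)>0$ — a contradiction. Therefore $H_p(t)<L$, i.e. $H_p(t)\ll p^{1+\varepsilon}h^{-1}$, the implied constant depending on $\varepsilon$ through $m$.

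\emph{Main obstacle.} There is no deep point; the one genuine calibration is the uncertainty-principle balance in the second step: the weight $\phi$ must simultaneously be supported on an interval of the target length $L$ and carry negligible Fourier mass outside $[-h,h]$. This is possible precisely because $Lh\asymp p^{1+\varepsilon}$ comfortably exceeds $p$, and raising the convolution power $m$ in terms of $\varepsilon$ kills the Fourier tail beyond $h$. The remaining work is bookkeeping: the rewriting of the hypothesis in Step~1, and the translation-invariance of $|\widehat{\,\cdot\,}|$, which lets us place $\phi$ on whichever interval is avoided.
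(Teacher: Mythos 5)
Your proof is correct and self-contained. The paper does not prove this statement; it is quoted verbatim as \cite[Lemma 7.1]{KS1} and used as a black box, so there is no internal proof to compare against. Your argument is, however, the expected one behind such statements: (i) decode the hypothesis as a bound $\sum_{1\le|v|\le h}|\widehat{\mathbf{1}_{a\G}}(v)|\le t/2$ uniform over all cosets $a\G$, via the change of variable showing $S_{j+k}(t)=S(g^kv,\G)$ whenever $v\in\G_j$; (ii) if some coset misses an interval $I$ of length $L\asymp p^{1+\varepsilon}/h$, test the coset indicator against a nonnegative bump supported in $I$ (an $m$-fold self-convolution of a short interval) whose Fourier transform is concentrated in $[-h,h]$; (iii) run Parseval and observe the main term dominates. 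All the intermediate bounds check out: the geometric-sum estimate $|\widehat{\mathbf{1}_{[0,\ell)}}(c)|\le\tfrac12\|c/p\|^{-1}$, the tail estimate $(p/(2h\ell))^m\ll(m/p^\varepsilon)^m\le p^{-5}$ once $m\gg1/\varepsilon$ and $p$ is large, the Cauchy--Schwarz/Parseval bound $\sum_c|\widehat{\mathbf{1}_{a\G}}(c)|\le p\sqrt t$, and the final comparison which leaves the main term $t\widehat\phi(0)/p$ strictly positive. The harmless reductions ($h<p/2$, $L\le p$, $p$ large) are all justified, and the implied constant's $\varepsilon$-dependence, coming through the choice of $m$, is exactly what the $o(1)$/$\ll$ notation in the theorem permits.

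One presentational nit: when you write $|\widehat{\mathbf{1}_{g^k\G}}(v)|=|S(g^kv,\G)|$ you are tacitly using $S(-a,\G)=\overline{S(a,\G)}$, which is fine but worth saying explicitly since $-1$ need not lie in $\G$. Also, the paper's printed definition of $H_p(t)$ has an implicit ``for all $1\le j\le H$'' quantifier (as is standard in the source literature); your reading of $H_p(t)$ as the longest run of consecutive residues avoiding some coset is the intended one and the one needed for the argument.
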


Besides, it is easy to see that the quantity 
$$N(\G,h):= \sum_{1 \leq j \leq n}N_{j,t}^{2}(h)$$ is the number of solutions  to the congruence 
$$
 ux \equiv y \pmod p\,, \quad \quad 0<|x|,|y| \leq h, u \in \Gamma  \,.
$$
In \cite[Theorem 1]{bur3}, an upper bound for $N(\G,h)$ was proved in the following form.

\begin{theorem}\label{subgr_int}
Let $\nu \geq 1$ be a fixed integer and let $t \gg p^{1/2}, \;p \rightarrow \infty.$ Then 
$$
N(\G,h) \leq h t^{\frac{2\nu +1}{2\nu(\nu+1)}}p^{\frac{-1}{2(\nu + 1)}+o(1)} + h^{2}t^{1/\nu}p^{-1/\nu + o(1)}.
$$
\end{theorem}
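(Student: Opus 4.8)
The final statement to prove is Theorem~\ref{gap}, the bound $H_p(t)\le p^{437/480+o(1)}$ for $t\ge p^{1/2}$, and the plan is to run the standard amplification machinery of \cite{bur3}, \cite{Iur}, \cite{KS1} with the improved energy input supplied by Theorem~\ref{t:G_large} (and Corollary~\ref{f:subgr_1}--\eqref{f:subgr_2} for the small range). The skeleton is: by the quoted Theorem of \cite[Lemma 7.1]{KS1}, it suffices to choose $h$ as large as possible subject to the requirement that $\sum_{1\le j\le n} N_{j,t}(h)|S_{j+k}(t)|\le 0.5t$ for all $k$; then $H_p(t)\ll p^{1+\varepsilon}h^{-1}$. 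First I would apply Cauchy--Schwarz in $j$ to split this sum as $\big(\sum_j N_{j,t}^2(h)\big)^{1/2}\big(\sum_j |S_{j+k}(t)|^2\big)^{1/2} = N(\G,h)^{1/2}\cdot\big(\sum_j|S_j(t)|^2\big)^{1/2}$, the second factor being independent of $k$ after reindexing. The second moment of Gauss-type sums over the subgroup is controlled by additive energy: $\sum_{j=1}^{n}|S_j(t)|^2 \ll n\,t^{-1}\,\E(\G)\cdot p$ up to $o(1)$ factors (this is the classical identity relating $\sum_a |S(a,\G)|^2$ over a full set of coset representatives to $\E(\G)$), so this is exactly where the new bound \eqref{f:G_large} enters. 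Combining, the constraint becomes $N(\G,h)\,\cdot\, n t^{-1}\E(\G) p \lesssim t^2$, i.e. $N(\G,h)\lesssim t^3/(n\,\E(\G)\,p) = t^4/((p-1)\E(\G)p)$.

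Next I would feed in the upper bound for $N(\G,h)$ from Theorem~\ref{subgr_int}: $N(\G,h)\le h\, t^{(2\nu+1)/(2\nu(\nu+1))}p^{-1/(2(\nu+1))+o(1)} + h^2 t^{1/\nu}p^{-1/\nu+o(1)}$, treating $\nu$ as a free integer parameter to be optimized at the end. Plugging this into the constraint $N(\G,h)\lesssim t^4 p^{-2+o(1)}\E(\G)^{-1}$ and solving for the largest admissible $h$: from the linear term one gets $h\lesssim t^{4 - (2\nu+1)/(2\nu(\nu+1))}p^{-2+1/(2(\nu+1))+o(1)}\E(\G)^{-1}$, and from the quadratic term $h\lesssim \big(t^{4-1/\nu}p^{-2+1/\nu+o(1)}\E(\G)^{-1}\big)^{1/2}$; the true $h$ is the minimum of the two. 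Then $H_p(t)\ll p^{1+o(1)}h^{-1}$ gives $H_p(t)\le p^{1+o(1)}\cdot\max$ of the two reciprocal expressions. Substituting the energy bound: in the principal range one uses $\E(\G)\ll (|\G|^{104}/p^3)^{1/40}$ from \eqref{f:G_large} (the first branch of the max), and one must separately check the secondary branch $(|\G|^{68}/p^5)^{1/24}$ as well as the regime $t\le p^{5/8}$ where \eqref{f:subgr_2} is stronger, and finally the regime where the worst case is $t$ close to $p$ so that $\E(\G)\sim t^{5/2}$ is the right input. Writing $t=p^\theta$ with $\theta\in[1/2,1]$, each of these yields $H_p(t)\le p^{F(\theta,\nu)+o(1)}$ for an explicit piecewise-linear-in-$\theta$, rational-in-$\nu$ exponent $F$, and one takes the supremum over $\theta$ and then the best integer $\nu$ for that worst $\theta$.

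The main obstacle, and the only genuinely non-mechanical part, is the optimization bookkeeping: there are several interacting thresholds --- the three cases in Theorem~\ref{subgr_int} effectively via the choice of $\nu$, the two terms (linear vs.\ quadratic in $h$) in $N(\G,h)$, the three energy regimes $t\lesssim p^{5/8}$, $p^{5/8}\lesssim t\le p^{2/3}$, $t\ge p^{2/3}$, and within the middle regime the two branches of the max in \eqref{f:G_large} --- and the claimed exponent $437/480$ must emerge as the value of this multi-parameter min-max at its saddle. I expect the worst case of $\theta$ to sit at one of the regime boundaries (most plausibly near $\theta = 2/3$ where the intermediate energy bound meets the sharp $|\G|^{5/2}$ bound, or at the point where the linear and quadratic terms of $N(\G,h)$ balance for the optimal $\nu$), so the proof reduces to: (i) write the exponent $F(\theta,\nu)$ explicitly in each of the finitely many regimes, (ii) for each regime optimize $\nu$ over the integers (the real optimum will be a rational, so check the floor and ceiling), (iii) maximize over $\theta$ in that regime, (iv) take the overall maximum and verify it equals $437/480$. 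Everything else --- the Cauchy--Schwarz step, the energy/Gauss-sum second-moment identity, and the application of the black-box theorems --- is routine and follows \cite{Iur} verbatim except for the substitution of the improved $\E(\G)$.
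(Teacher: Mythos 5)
You have misidentified the statement to be proved. The statement in question is Theorem~\ref{subgr_int} itself, the bound $N(\G,h)\le h\,t^{(2\nu+1)/(2\nu(\nu+1))}p^{-1/(2(\nu+1))+o(1)}+h^2t^{1/\nu}p^{-1/\nu+o(1)}$, whereas your plan targets Theorem~\ref{gap} and explicitly treats Theorem~\ref{subgr_int} as a given input (``Next I would feed in the upper bound for $N(\G,h)$ from Theorem~\ref{subgr_int}\ldots''). In the paper Theorem~\ref{subgr_int} carries no proof at all: it is a verbatim quotation of \cite[Theorem~1]{bur3}, introduced precisely ``to be used as a black box.'' To actually prove it one needs the method of \cite{bur3} (recursive estimates for product sets and multilinear Kloosterman/Burgess-type sums over residue rings), none of which appears in your sketch or in the present paper.

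Even read as an outline for Theorem~\ref{gap}, two steps would not survive. The paper uses the three-factor H\"older split
$$\sum_{1\le j\le n} N_{j,t}(h)|S_{j+k}(t)|\le\Bigl(\sum_{j} N_{j,t}(h)\Bigr)^{1/2}\Bigl(\sum_{j} N_{j,t}(h)^2\Bigr)^{1/4}\Bigl(\sum_{j}|S_j(t)|^4\Bigr)^{1/4},$$
not a two-factor Cauchy--Schwarz; it is the \emph{fourth} moment that brings in the additive energy via \eqref{ort}, $\sum_j|S_j(t)|^4<\frac{p}{t}\E(\G)$. The second moment your split produces is $\sum_{j\le n}|S_j(t)|^2=p-t$ by orthogonality, carrying no information about $\E(\G)$, so your claimed bound $\sum_j|S_j(t)|^2\ll n t^{-1}\E(\G)p$ is both structurally off target and enormously lossy, and following it through would not yield the exponent $437/480$.
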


By orthogonality, it follows from definition of the quantity $S_{j}(t)$ that
\begin{equation}\label{ort}\sum_{1 \leq j \leq n} |S_{j}(t)|^{4} < \frac{p}{t} \E(\G).\end{equation}

\medskip
We are now ready to prove Theorem \ref{gap}. 

\medskip
\begin{proof} The structure of the proof repeats its predecessors in \cite{bur3} or \cite{Iur}.
	Just as above $t:=|\G|$. By the H\"older  inequality we obtain
	
	$$\sum_{1 \leq j \leq n}N_{j,t}(h)|S_{j+k}(t)| \leq \biggl(\sum_{1 \leq j \leq n} N_{j,t}(h)\biggr)^{1/2} \biggl(\sum_{1 \leq j \leq n} N_{j,t}(h)^{2}\biggr)^{1/4} \biggl(\sum_{1 \leq j \leq n} |S_{j}(t)|^{4}\biggr)^{1/4} .$$
	
	For the three terms in the righthand side we have estimates 
	$$\sum_{1 \leq j \leq n} N_{j,t}(h)= 2h,$$
	$$\sum_{1 \leq j \leq n} N_{j,t}(h)^{2}=N(\G,h),$$ 
	and \eqref{ort}.

	Define $h=p^{43/480 -\varepsilon}$ for some small fixed $\varepsilon>0$.
	
	Consider the case  $t \in [p^{1/2}, p^{4/7}]$. Then by \eqref{f:G_large} one has 
	$$\E (\G) \lesssim \left(\frac{|\G|^{104}}{p^{3}}\right)^{1/40}.$$
	We take $\nu=6$ in the estimate of Theorem \ref{subgr_int} for $N(\G,h)$, in which case the second term in the estimate dominates.
	One can easily check that for such choice of parameters, the quantity 
	\begin{equation}
		\sum_{1 \leq j \leq n}N_{j,t}(h)|S_{j+k}(t)|
	\end{equation}
	is less than $0.5t$.
	
	Now consider the case when $t>p^{4/7}$. Then we merely write 
	$$
		\sum_{1 \leq j \leq n}N_{j,t}(h)|S_{j+k}(t)| \leq \max_{j}|S_{j}(t)| \sum_{1 \leq j \leq n} N_{j,t}(h) \leq p^{1/6+o(1)}t^{1/2}h. 
	$$
	The last inequality took advantage of the supremum estimate for $|S_{j}(t)|$ that we take from \cite[Theorem 1]{Ilmed}. It is easy to verify that  the inequality 
	\begin{equation}
		\sum_{1 \leq j \leq n}N_{j,t}(h)|S_{j+k}(t)| \leq 0.5t 
	\end{equation}
	also holds. With that, and taking sufficiently small $\varepsilon>0$, the proof of Theorem \ref{gap} is completed.
	$\hfill\Box$
\end{proof}

\section{Sum set inequalities} \label{subsec:sums} We start out with a remark that one can repeat the proof of Theorem \ref{thm:new_energy} with the matrix $\mathfrak M$ being replaced by the matrix of zeroes and ones, with ${\mathfrak M}_{ab}=1$ if the sum $a+b$ is popular, that is
$r_{A+A}(a+b)\geq \frac{|A|^2}{2|A+A|}.$

This will not alter the structure of the proof, owing the identity
$$
(b+a)-(c+a)=b-c, \;\mbox{ replacing } \; (b-a)-(c-a)=b-c,
$$
as well as the fact that the quantity $\E^+(A):=\sum_s r^3_{A+A}(s)$ satisfies the same upper bound as $\E_3(A)$ in \eqref{b:1}, Lemma \ref{l:E_3} -- it would replace $\E_3(A)$ in the analogue of estimate \eqref{longer}.

After that the acme of getting the upper bound on $\tr(\mathfrak M^2\mathfrak R)$ would remain to be estimate \eqref{Sig}.  However, estimating $\tr(\mathfrak M^2\mathfrak R)$ from below does present some challenge in the real case, owing to Remark \ref{rem:2}. However, in the multiplicative subgroup case the matrices $\mathfrak M,\,\mathfrak R$ still have the same principal eigenvector if al ones, and one arrives in the following estimate: for $|\Gamma|\leq\sqrt{p}$,
$$
|\Gamma+\Gamma| \gg |\Gamma|^{8/5} \log^{-2/5}|\Gamma|.
$$
The same estimate, with a slightly worse power of $\log|\Gamma|$ was established in \cite{S_ineq}. We do not know how to improve the exponent $8/5$ for the sum set towards $7/3$ as we have for the difference set but would like to point out that both the sum set and the energy FPMS exponents have been now made dependent on the same estimate \eqref{Sig}.

To circumnavigate the difficulty arising from the fact that principal eigenvectors of the matrices with elements $r_{A+A}(a+b)$ and $r_{A-A}(a-b)$ may be different for $M>1$, we present a different proof, resulting in a new FPMS-type sum-product inequality, with a ``reasonable'' $M$-dependence. Heuristically speaking, the better the FPMS-exponent, the weaker the inequality, treated as the usual sum-product inequality, namely when $|AA|\sim|A+A|$, cf. \eqref{sh}. In this sense, the following inequality \eqref{sum_est} is slightly weaker than the classical Elekes one \eqref{el} and represents, in some sense a sum set analogue of \eqref{sh}. 

We remark that on can find ``middle-ground'', that is non-trivial from the FPMS point of view (that is giving a FPMS exponent greater than $3/2$) and still stronger than \eqref{el}, due, e.g., to Li and Roche-Newton \cite{Li_R-N} or the one used by Konyagin and Shkredov \cite[Theorem 12]{KS} to derive their new sum-product bound. But as the FPMS aspect improves, the other aspect eventually becomes weaker than \eqref{el}, the only exception (up to a log factor) being \eqref{main}.

\begin{theorem} \label{thm:sums} For a real set $A$ one has the estimates \begin{equation}\label{sum_est} 
 \qquad 
|A+A|^{10}|AA|^{17}\gtrsim |A|^{33}.
\end{equation}
\end{theorem}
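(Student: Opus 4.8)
\medskip
\noindent\textbf{Proof proposal.} Writing $M:=|AA|/|A|$, the inequality \eqref{sum_est} is equivalent to the FPMS bound $|A+A|\gg M^{-17/10}|A|^{8/5}$, so the plan is to establish the latter, arguing by contradiction with $K:=|A+A|/|A|$ assumed substantially below $M^{-17/10}|A|^{3/5}$. The first move is the usual isolation of the heavy part of the sumset: a dyadic pigeonholing of $\sum_s r_{A+A}(s)=|A|^2$ produces a multiplicity level $\sigma$ and a set $P\subseteq A+A$ with $r_{A+A}(s)\sim\sigma$ on $P$ and $|P|\sigma\gg|A|^2/\log|A|$, whence $\sigma\gg|A|/(K\log|A|)$ and $|P|\le K|A|$. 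The structural input is carried by the bipartite ``popular-sum'' graph $G:=\{(a,b)\in A\times A:\ a+b\in P\}$, which has edge density $\gg1/\log|A|$, together with the hypothesis $|AA|=M|A|$, which says precisely that every fibre $A_s:=A\cap(s-A)$, $s\in P$, has $A_sA_s\subseteq AA$.

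\medskip
The crux, and the reason Lemma \ref{lemma:rect} enters, is that the clean eigenvalue argument behind Theorem \ref{thm:new_energy} is not available here: for sums with $M>1$ the two operators in play — the $0$-$1$ adjacency matrix of $G$ and the matrix $(r_{A+A}(a+b))_{a,b}$ — need no longer share a principal eigenvector, so additive structure must be recovered combinatorially. Feeding $G$ to Lemma \ref{lemma:rect} should produce large sets $X,Y\subseteq A$ — of size $|A|$ up to powers of $\log|A|$, and crucially \emph{without} the power-of-$K$ loss that an appeal to the Balog--Szemer\'edi--Gowers theorem would incur — for which essentially every pair of $X\times Y$ has its sum in $P$. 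This confines the bulk of $X+Y$ to $P$, so $|X+Y|\lesssim K|A|$, while $XY\subseteq AA$ gives $|XY|\le M|A|$, and the Pl\"unnecke--Ruzsa inequality then controls the iterated product sets that enter the count. One finishes by applying the Szemer\'edi--Trotter theorem (Theorem \ref{SzT}) to the point set $(X+Y)\times XY$ with the $|X||Y|$ lines $y=x_0(x-y_0)$, $x_0,y_0\in X$ — and, since the bare Elekes inequality \eqref{el} only reproduces the threshold $|A+A|\gg_M|A|^{3/2}$ at this point, reinforcing it with a sharper sum-product input of the type underlying Theorems \ref{thm:1} and \ref{thm:new_energy}; inserting $|X+Y|\lesssim K|A|$ and $|XY|\le M|A|$ and simplifying then collapses the chain to $K\gg M^{-17/10}|A|^{3/5}$, i.e.\ \eqref{sum_est}. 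The exponents $10$ and $17$ are exactly what balancing the Szemer\'edi--Trotter $2/3$-powers against the Pl\"unnecke--Ruzsa losses on the multiplicative side and the logarithmic cost of Lemma \ref{lemma:rect} produces.

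\medskip
I expect the decisive difficulty to be precisely this extraction step: Lemma \ref{lemma:rect} must be quantitatively lossless enough that the passage from $G$, of density $\gg1/\log|A|$, to the almost complete rectangle $X\times Y$ costs only powers of $\log|A|$ and not a power of $K$, because a genuine power-of-$K$ loss there degrades the conclusion back to the threshold $|A+A|\gg_M|A|^{3/2}$ — which is why the classical Balog--Szemer\'edi--Gowers route is too costly and why the lemma deserves to be stated on its own account. A minor, routine matter is the initial reduction to $0\notin A$ and to a sign-coherent $A$ so that the product lines are well-defined, harmless since all quantities in \eqref{sum_est} are invariant under translation and dilation and since Theorem \ref{SzT} and \eqref{el} hold over $\R$ and $\C$ without such restrictions.
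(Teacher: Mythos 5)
Your plan diverges from the paper's proof in a way that leaves a real gap, so let me flag where it breaks down. The paper does not pigeonhole the popular sums; it pigeonholes the popular differences, takes $P\subseteq A-A$ with $r_{A-A}\approx\Delta$ capturing a positive fraction of $\E(A)$, builds the graph $\mathcal P(A)=\{(a,a'):a-a'\in P\}$, and feeds \emph{that} to Lemma \ref{lemma:rect}. The sumset then enters only through a lower bound for the number of collinear triples in $S\times S$ ($S=A+A$) on lines with slopes $\lambda\in A/A$: for each $\lambda$ one manufactures a point set $\mathcal Q_\lambda\subseteq S\times S$ supported on $\le|P|$ parallel $\lambda$-lines from the regularity that Lemma \ref{lemma:rect} supplies, applies H\"older and Cauchy--Schwarz with the $\E_3$-bounds to bound $|\mathcal Q_\lambda|$ from below in terms of $q$ and $|A'_\lambda|$, sums over $\lambda$, and compares to the Szemer\'edi--Trotter bound $\mathcal T(S)\lesssim|S|^4$. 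Nowhere is the sumset ever confined to a popular subset of $A+A$, and nowhere is $|X+Y|$ bounded by $K|A|$.

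The concrete gaps in your version are two. First, Lemma \ref{lemma:rect} does \emph{not} produce an ``almost complete'' rectangle: it produces $A'\times A''$ with $|A'|\gtrsim|A|$, $|A'|\gtrsim|A''|\ge q$, and each abscissa $a\in A'$ supporting at least $q\sim|\mathcal P(\tilde A)|/|A|$ points of $\mathcal P(\tilde A)$. This is regularity, not near-completeness; $q$ is in general far smaller than $|A''|$, so the conclusion ``essentially every pair of $X\times Y$ has its sum in $P$'' and hence $|X+Y|\lesssim K|A|$ does not follow. Only the $\mathcal P$-portion of $X\times Y$ is confined, and the complement can have arbitrary sums. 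Second, even granting that step, the finish is the actual crux and is left undone: applying Szemer\'edi--Trotter to $(X+Y)\times XY$ with $|X||Y|$ lines reproduces precisely the Elekes threshold $|A+A|^2|AA|^2\gg|A|^5$ and no more; your proposal acknowledges this and appeals to ``a sharper sum-product input of the type underlying Theorems \ref{thm:1} and \ref{thm:new_energy}'' without saying what it is or how it is inserted, which is exactly the content one needs to supply. The paper's substitute is the $\mathcal Q_\lambda$ construction together with the third-moment estimate \eqref{b:1} applied to $A''$ and to the dilates $A'_\lambda$; that is where the exponents $10$ and $17$ actually come from, and it is not an instance of reinforcing Elekes on a pruned Cartesian product.
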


The forthcoming proof  of Theorem \ref{thm:sums} uses the construction from \cite[Proof of Theorem 10]{S_AA}, which dealt with differences and led to \eqref{sh}. Its adaptation to sums rests on the following Lemma \ref{lemma:rect} --  a stronger version of the argument  from  \cite[Proof of Proposition 16]{RSS} and \cite[Section 4]{KS}.

\medskip
\begin{proof}

Denote $D=D(A)=A-A$, $S=S(A)=A+A$. 
Let $P\subseteq D$ be a popular energy subset of $D$ by energy. Note that $P,\Delta$ throughout the paper have different meaning, and in this section these are not the quantities used in the proofs of Theorems \ref{thm:1} and \ref{thm:new_energy}, also different from one another. Namely, now $P$ is defined as follows. There exists  some $\Delta:\,\frac{\E(A)}{2|A|^2}\leq \Delta \leq |A|$, such that each $x\in P$ has approximately $\Delta$, that is between $\Delta$ and $2\Delta$ realisations and 
\begin{equation}\label{popen} \E(A) \sim \sum_{x\in P}r^2_{A-A}(x) \gtrsim  |P|\Delta^2.
\end{equation}
Such $P$ exists by the dyadic version of the  pigeonhole principle. We will not keep track of powers of $\log|A|$, for they go slightly out of hand in the proof of Lemma \ref{lemma:rect}.

Furthermore, define a plane point set
\begin{equation}\label{pa}
\mathcal P(A) \subseteq A\times A := \{(a,a'):\;a-a'\in P\},\; \mbox{so }\;|\mathcal P(A) | \approx  |P|\Delta.\end{equation}

\begin{lemma}
	\label{lemma:rect} 
	There exists $\tilde A\subseteq A$, with $|\tilde A|\gg|A|$,  a pair of subsets $A',A''\subseteq \tilde A$ and a natural number $q$ with the following property: $|A'|\gtrsim |A''|\geq q$, $|A'|\gtrsim |A|$, and for each $a\in A'$ there are at least $q\sim |\mathcal P(\tilde A)|/|A|$ points $(a',a'')\in \mathcal P(\tilde A) \cap (A'\times A'')$, where the point set $P(\tilde A)\subseteq \tilde A\times \tilde A$ is the popular energy set defined in terms of $\tilde A$ via  \eqref{popen}, \eqref{pa}. 
	
	Besides, $\E(\tilde A)\sim \E(A',A'').$ 
\end{lemma}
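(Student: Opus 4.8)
\textbf{Proof proposal for Lemma \ref{lemma:rect}.}

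The plan is to extract the rich subsets $A', A''$ by a double pigeonholing on the bipartite incidence structure of the point set $\mathcal{P}(A) \subseteq A \times A$, combined with a Katz--Koester / graph-regularization step that avoids the Balog--Szemer\'edi--Gowers theorem. First I would regard $\mathcal{P}(A)$ as (the edge set of) a bipartite graph $G$ on the vertex classes $A$ (``left'') and $A$ (``right''), with $|\mathcal{P}(A)| \approx |P|\Delta$ edges by \eqref{pa}. The average left-degree is $\approx |P|\Delta/|A|$; by a standard dyadic pigeonhole I would pass to a subset $\tilde A \subseteq A$ with $|\tilde A| \gg |A|$ on which the left-degree is essentially constant, $\approx q := |\mathcal{P}(\tilde A)|/|A|$, where I keep only left-vertices $a$ whose neighbourhood $N(a) = \{a' : a - a' \in P\}$ has size $\gtrsim q$. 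The point of passing from $A$ to $\tilde A$ (rather than staying inside $A$) is that the popular-energy set $P$ and hence $\mathcal{P}$ must be re-defined intrinsically for $\tilde A$ via \eqref{popen}--\eqref{pa}; one checks that shrinking $A$ by a constant factor changes $\E(A)$, $\Delta$, $|P|$ and $q$ only by absolute constants (up to logs), so the regularization is self-consistent — this bookkeeping is routine but is exactly the kind of thing that ``goes slightly out of hand,'' as the remark before the lemma warns.

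Next I would produce $A''$ as a \emph{popular} common neighbourhood. For a typical left-vertex $a \in \tilde A$, the set $N(a)$ has $\gtrsim q$ elements; I want most of these to lie in a single fixed set $A''$ which is itself large and which is hit often. The mechanism is: count pairs $(a, a')$ with $a' \in N(a)$; this is $\approx |P|\Delta$. Let $A''$ be the set of right-vertices $a'$ with right-degree $\gtrsim |P|\Delta/|A| \approx q$ (again after a dyadic cutoff so the right-degrees are $\approx$ constant); a Cauchy--Schwarz / pigeonhole argument shows $A''$ carries a constant proportion of the edges and $|A''| \gtrsim |A|$. Then set $A' := \{a \in \tilde A : |N(a) \cap A''| \gtrsim q\}$; since deleting all edges incident to low-degree right-vertices removes only a small constant fraction of the total edge mass, $A'$ still carries $\gtrsim |P|\Delta$ edges, forcing $|A'| \gtrsim |A|$ and $|A'| \gtrsim |A''| \geq q$ (the last because each $a \in A'$ sees $\gtrsim q$ points inside $A''$, so $|A''|\ge q$). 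By construction, for every $a \in A'$ there are $\gtrsim q$ points $(a', a'') \in \mathcal{P}(\tilde A) \cap (A' \times A'')$ — here I use $a'' \in A''$ by the degree condition and $a' \in A'$ after one more symmetric trimming so that both endpoints survive; this mutual trimming is the delicate part, since naively enforcing ``$a' \in A'$'' could destroy the lower bound on $|N(a) \cap A''|$, and one must instead iterate the trimming a bounded number of times (or use a maximal-subset/extremal argument) until the process stabilizes with only constant-factor losses.

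Finally I would verify the energy transfer $\E(\tilde A) \sim \E(A', A'')$. The lower bound $\E(\tilde A) \gtrsim \E(A', A'')$ is immediate since $A', A'' \subseteq \tilde A$ and additive energy is monotone in this sense (every solution of $a_1 - a_2 = a_3 - a_4$ with $a_1, a_3 \in A'$, $a_2, a_4 \in A''$ is a solution with all variables in $\tilde A$). For the reverse, I use that the surviving edge set of $G$ restricted to $A' \times A''$ still has $\gtrsim |P|\Delta \sim |\mathcal{P}(\tilde A)|$ edges, each popular difference $x \in P$ retaining $\gtrsim \Delta$ realisations $(a', a'')$ with $a' \in A'$, $a'' \in A''$; summing $r_{A'-A''}^2(x) \gtrsim \Delta^2$ over the $\gtrsim |P|$ surviving popular differences and invoking \eqref{popen} gives $\E(A', A'') \gtrsim |P|\Delta^2 \sim \E(\tilde A)$. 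The main obstacle throughout is the simultaneous two-sided regularization — ensuring that the trimming which makes right-endpoints land in $A''$ does not spoil the left-degree bound, and vice versa — which I would handle by the standard device of deleting all vertices of too-small degree, alternately on each side, and observing that the total edge mass lost over a bounded number of rounds is a small constant fraction of $|P|\Delta$, so the iteration terminates with all the claimed cardinality and degree bounds intact.
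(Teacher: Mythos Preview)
Your regularization outline does produce a bipartite subgraph on $A'\times A''$ with $\gtrsim |P|\Delta$ edges and minimum left-degree $\gtrsim q \approx |P|\Delta/|A|$; that much is standard. The gap is the conclusion $|A'|\gtrsim |A|$, which is precisely the point of the lemma (the remark before the statement stresses that ``the lower bound $|A'|\gtrsim |A|$ strengthens the earlier statement and is crucial''). Alternating low-degree deletion only yields $|A'|\cdot(\text{max left-degree})\gtrsim |P|\Delta$, and nothing prevents the surviving edges from being concentrated on a small set of high-degree abscissae; in that scenario $|A'|$ can be as small as $q$ rather than $|A|$. Your assertion that the trimming ``stabilizes with only constant-factor losses'' refers to edge count, not to vertex-set size, and the intermediate claim $|A''|\gtrsim |A|$ is simply false in general. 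Similarly, the first dyadic pigeonhole (``pass to $\tilde A\subseteq A$ with $|\tilde A|\gg |A|$ on which the left-degree is essentially constant'') conflates two different pigeonholes: the dyadic level carrying most edges need not have $\gg |A|$ vertices.

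The paper's proof supplies exactly the missing mechanism. It dyadically partitions $\mathcal P$ into $\lesssim \log^2|A|$ rectangles $A_i\times A_i^j$ (regular in both coordinates) and then runs a dichotomy. \textsf{Case 1}: some rich rectangle has a side of length $\gg |A|\log^{-10}|A|$; then take that side as the wide direction and extract $A',A''$ directly, the width bound giving $|A'|\gtrsim |A|$. \textsf{Case 2}: every rich rectangle is narrow in both directions; then the union of all their projections is a subset of $A$ of size $\ll |A|\log^{-5}|A|$, and deleting it removes at least half of the mass of $\mathcal P$ and hence at least a third of $\E(A)$. Redefine $P,\mathcal P$ for the new set and iterate. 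After $\log^5|A|$ rounds of Case~2 one still has $|\tilde A|\gg |A|$ but $\E(\tilde A)=o(|A|^2)$, a contradiction; so Case~1 must occur at some stage. This energy-decrement iteration is the idea your proposal lacks, and without it the bound $|A'|\gtrsim |A|$ does not follow.
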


Informally, Lemma \ref{lemma:rect} claims that there exists a positive proportion (in fact, of density arbitrarily close to $1$ as the proof shows) proportion subset $\tilde A$ of $A$, such that much of the energy of $\tilde A$ (up to factors of powers of $\log|A|$) is supported on some ``regular'' point set $\mathcal P'$, such that $\mathcal P'$ gets covered by a {\em wide} rectangle:  $\mathcal P'\subseteq A'\times A'' \, \subseteq \, \tilde A\times \tilde A$, with, most importantly, the lower bound $|A'|\gtrsim |A|$, {\em and} (up to factors of powers of $\log|A|$) an expected lower bound of the number of  points of $\mathcal P'$ for every abscissa $a'\in A'$. The lower bound $|A'|\gtrsim |A|$ strengthens the above-mentioned earlier statement in \cite{RSS} and is crucial for the forthcoming argument. 
 
We remark that the claim or Lemma \ref{lemma:rect} remains true for any finite $\geq 1$ moment of the number-of-realisations function $r_{A-A},$  as well as its multiplicative analogue $r_{A/A}$.

 \medskip
We prove the lemma afterwards and now proceed with the proof of Theorem \ref{thm:sums}. Apply Lemma \ref{lemma:rect} and to ease on notation reset $\tilde A=A$ and $\mathcal P(\tilde A)=\mathcal P$.
Let $\mathcal P'=\mathcal P\cap(A'\times A'').$

For $\lambda\in A/A$ (nonzero, since $0\not \in A$) denote 
$$A'_\lambda = \{a\in A: \,\lambda a\in A'\},\;\mbox{ so }\;  \lambda A'_\lambda\;\subseteq\;A'.$$
Also denote, for brevity
\begin{equation}\label{ener}
\E^\times= \sum_\lambda |A'_\lambda|^2,
\end{equation}
so $\E^\times$ is the multiplicative energy of $A'$ and $A$. In the sequel sums in $\lambda$ mean sums over $ A/A$. Clearly,

\begin{equation}\label{ener1}
\sum_{\lambda} |A_\lambda| =  |A|^2,\qquad \sum_{\lambda} |A'_\lambda| =  |A||A'|.
\end{equation}

\medskip
Whether either $|A/A|=M|A|$ or $|AA|=M|A|,$ does not affect the argument, for $M$ only comes from estimate \eqref{b:1} in Lemma \ref{l:E_3} that we restate: \begin{equation}
\E_3(A'') \lesssim M^2|A||A''|^2,\qquad \E_3(A'_\lambda) = \E_3(\lambda^{-1} A'_\lambda) \lesssim M^2|A| |A'_\lambda|^2.
\label{one}
\end{equation}

\medskip
Let us estimate the number $\mathcal T_\Lambda$ of collinear point triples in  $S\times S \subset \R^2$ such that the the point triples are supported just on the lines with slopes in $\Lambda=A/A$. For the total number of collinear point triples $\mathcal T(S)\geq \mathcal T_\Lambda$ in $S\times S$  we have the unconditional upper bound, which is the standard implication of the Szemer\'edi-Trotter theorem:
\begin{equation}\label{ct} 
\mathcal T_\Lambda\leq \mathcal T(S) \lesssim |S|^4.
\end{equation}

\medskip
To get a lower bound on $\mathcal T_\Lambda$ we consider a line $y=\lambda x + d$, where some $d\in P$, $\lambda\in \Lambda$ and estimate the minimum number of points of $S\times S$, supported on this line.

This means, we are looking at some $a,b,a',b'\in A$ such that
$$
a+b - \lambda(a'+b') =  d.
$$
The latter equation is satisfied if one chooses $b=\lambda b'$, with any $b'\in A'_\lambda$: for any $a'\in A'_\lambda$ there are at least $q$ choices of $a\in A''$, where the number $q$ comes from Lemma \ref{lemma:rect}.

Hence, define a point set $\mathcal Q_\lambda\subseteq S\times S$ as follows:

\begin{equation}\label{qul}
\mathcal Q_\lambda:=\{ (a' + b, a + \lambda b): \,a\in A, \,a',b\in A'_\lambda, \mbox{ and }(a,\lambda a')\in \mathcal P'\}. 
\end{equation}
so for $(x,y)\in \mathcal Q_\lambda$ we have
$$
y-\lambda x = a - \lambda a',
$$
and since $\lambda a'\in A'$, for each pre-image $a'\in A'_\lambda$ there are at lest $q$ values of $a\in A''$,  by Lemma \ref{lemma:rect}, such that $a - \lambda a'\in P$.

The point set $\mathcal Q_\lambda \subset \mathbb R^2$ (by construction, see \eqref{qul}) is supported on at most $|P|$  parallel lines with the slope $\lambda$, making therefore $|\mathcal Q_\lambda|$ incidences with these lines. The number of collinear triples in the set $\mathcal Q_\lambda$ on these lines, giving the lower bound for the quantity $\mathcal T_\lambda$ is at least the uniform case (in other words, we use the H\"older inequality):
$$
\mathcal T_\lambda \geq  |P| (|\mathcal Q_\lambda|/|P|)^3 = |\mathcal Q_\lambda|^3/|P|^2.
$$

Comparing this with \eqref{ct} we obtain

\begin{equation}\label{int}
|S|^4|P|^2\gtrsim \sum_{\lambda\in \Lambda} |\mathcal Q_\lambda|^3.
\end{equation}

\medskip
For a fixed $\lambda$, by \eqref{qul} and Cauchy-Schwarz, one has
\begin{equation}
q|A'_\lambda|^2 \leq |\mathcal Q_\lambda|^{1/2} X^{1/2},
\label{cs}\end{equation}
where $X$ is the number of solutions of the following system of equations:
$$
a' + b= a''+ b', \qquad a + \lambda b = a''' +  \lambda b': \qquad a',a'',b,b'\in A'_\lambda,\;a,a''' \in A''.
$$
This can be rewritten as 
$$
\lambda(a' - a'') = \lambda(b'- b) = a-a'''.
$$
For $d\in A-A$, let $r''(d),r'_\lambda(d)$ denote the number of realisations of $d$ as a difference in $A''-A''$ and $A'_\lambda-A'_\lambda$, respectively. Then we have, tautologically by definition of $X$ and then H\"older inequality
$$
X \leq \sum_d r''(d)  r'_\lambda(d) r'_\lambda(d)  \leq \E^{1/3}_3(A'')   \E^{2/3}_3(A'_\lambda) ,
$$

We can now proceed with \eqref{cs} by  applying \eqref{one} to the above estimate for the quantity $X$, getting
$$
q |A'_\lambda|^2 \leq  |\mathcal Q_\lambda|^{1/2} (M^{1/3}|A|^{1/6}|A''|^{1/3}) (M^{2/3}|A|^{1/3}  |A'_\lambda|^{2/3}).
$$

\medskip
Before summing over $\lambda\in A/A$ let us rearrange as follows:
\begin{equation}
q|A''|^{-1/3}|A|^{-1/2}  |A'_\lambda|^{3} \leq  M  |\mathcal Q_\lambda|^{1/2}  |A'_\lambda|^{5/3},
\label{intm}\end{equation}

We now sum in $\lambda$. In the right-hand side of \eqref{intm} we apply the H\"older inequality:
 $$\sum_\lambda |\mathcal Q_\lambda|^{1/2}  |A'_\lambda|^{5/3}\leq \left(\sum_\lambda |\mathcal Q_\lambda|^3 \right)^{1/6} \left(\sum_\lambda |A'_\lambda|^2 \right)^{5/6}.$$

In the left-hand side of \eqref{intm}  use the Cauchy-Schwarz inequality and relations \eqref{ener}, \eqref{ener1}:
$$\E^\times=\sum_{\lambda} |A'_\lambda|^{1/2}|A'_\lambda|^{3/2} \leq \sqrt{ |A| |A'| } \sqrt{\sum_{\lambda}|A'_\lambda|^3}.$$

Applying  the standard Cauchy-Schwarz estimate $\E^\times\geq(|A||A|')^2/(M|A|)$ yields

$$
\sum_{\lambda}  |\mathcal Q_\lambda|^{3} \geq M^{-6} |A|^{-9} q^6 |A''|^{-2} |A'|^{-6} {\E^\times}^{7} \geq M^{-13} |A|^{-2} q^6 |A''|^{-2} |A'|^{8},
$$
Thus by \eqref{int} one has
$$
|S|^4\gtrsim M^{-13} |A|^{-2} q^6 |A''|^{-2} |A'|^{8} |P|^{-2}.
$$

Using Lemma
\ref{lemma:rect}
we have the worst possible case $q^6 |A''|^{-2} |A'|^{8} \gtrsim (|P|\Delta)^6$,
thus
$$
|S|^4\gtrsim M^{-13} |A|^{-2} |P|^4 \Delta^6 \gtrsim M^{-13} |A|^{-2} \E^4(A)\Delta^{-2}.$$
The claim of of Theorem \ref{thm:1} now follows from the upper bound
$$
\Delta \leq \E_3(A)/\E(A) \lesssim M^2|A|^3/\E(A),
$$
where for $\E_3(A)$ we use \eqref{b:1}, and the standard Cauchy-Schwarz lower bound $\E(A)\geq|A|^4/|S|.$ 
$\hfill\Box$
\end{proof}

\subsection{Proof of Lemma \ref{lemma:rect}}
\begin{proof}
The proof is a pigeonholing argument.

For brevity sake round up the values of all logarithms to integers.
In the notation of the proof of Theorem \ref{thm:sums} curtail $\mathcal P=\mathcal P(A)$. Partition $A$ in at most $\log|A|$ nonempty sets by popularity of abscissae in $\mathcal P$. That is for $i=1,\ldots,\log|A|$ each abscissa from the set $A_i$ supports between $2^{i-1}$ (inclusive) and $2^i$ (non-inclusive) points of $\mathcal P$. Let us set $q_i=2^i$  and further in the proof use $\approx q_i$ as a shortcut for a number between $2^{i-1}$ and $2^i$. For  $i=1,\ldots,\log|A|$ take $\mathcal P_i$, defined as the set of all points of $\mathcal P$ with abscissae in $A_i$ and dyadically partition the set of its ordinates by popularity as the union of sets $A_i^j$. 

As a result, $\mathcal P$ is covered by the union of at most $\log^2|A|$ disjoint rectangles $A_i\times A_i^j$. By a {\em rectangle} we mean Cartesian product. By symmetry we can transpose $i$ and $j$, so there is another cover symmetric with respect to the bisector $y=x$. 

By the pigeonhole principle, at least one half of the mass $|\mathcal P| \approx |P|\Delta$ lies in ``rich'' rectangles $A_i^j$,  each containing at least $\frac{1}{2 \log^2|A|} |P|\Delta$ points of $\mathcal P$. Rather than writing out powers of $2$ arising after such popularity arguments explicitly, we will often subsume them in the $\gg,\ll,\approx$ symbols.

There are two cases to consider.

{\sf Case 1.}  One of the rich rectangles has width or height, say $\gg |A|\log^{-10}|A|$. Then there is another, symmetric with respect to the $y=x$ bisector, and we are almost done, choosing the wider  of the two rectangles. 

Indeed, let $\mathcal R_i^j$ denote such a rich rectangle. We set $A''$ to be its projection on the $y$-axis.

Furthermore, $\mathcal R_i^j$ has base $A_i$, with $|A_i|\geq|A''|$ and  $|A_i|\gg |A|\log^{-10}|A|$ (by how the case has been defined).  For each $a\in A_i$ there are  $\approx q_i$ points of $\mathcal P$. In particular, there are  $\ll q_i$ points of $\mathcal P\cap \mathcal R_i^j$ with a given abscissa $a\in A_i$.  Clearly, $|A_i|q_i\ll |P|\Delta$ (the total number of points in $\mathcal P$) thus the maximum number of points of $\mathcal P$ in $\mathcal R_i^j$ with the same abscissa $a\in A_i$ is trivially 
\begin{equation}\label{inter}q_i \ll |P|\Delta/|A_i|.\end{equation} On the other hand, $\mathcal R_i^j$ is rich, that is contains $\gg \frac{1}{\log^2|A|} |P|\Delta$ points of $\mathcal P$.  We refine its base $A_i$ to the set $A'$ of rich abscissae, supporting  at least $q: = \frac{1}{16} \frac{1}{|A_i|}\frac{1}{\log^2|A|} |P|\Delta$ points of $P\cap \mathcal R_i^j$ each; by the pigeonhole principle these abscissae in $A'$ still support $\gg \frac{1}{\log^2|A|} |P|\Delta$ points of $\mathcal P\cap \mathcal R_i^j$.

Clearly, $q\leq |A''|$, the total number of ordinates in $\mathcal R_i^j$.

Since the maximum number of points of $\mathcal P$ per abscissa in $A_i$ is $q_i$, the minimum number of the latter rich abscissae is 
$$|A'|\gg \frac{1}{q_i}\frac{1}{\log^2|A|} |P|\Delta \gg \frac{1}{\log^2|A|}|A_i|,
$$
by \eqref{inter}.
Recall that by construction $|A_i|\geq |A''|$, as well as $|A_i|\gg |A|\log^{-10}|A|$, so $|A'|\gtrsim |A''|$ and $|A'|\gtrsim |A|$.
Thus, but for the last claim of the lemma about energy, to be dealt with in the very end of the proof, we are done with Case 1, having found the sets $A',A''$ and $\tilde{A} = A$. 

\medskip{\sf Case 2.} Let us show that Case 1 is the generic one, that is if there are no sufficiently wide or high rich rectangles apropos of $\mathcal P(A)$, we can refine $A$ to its (arbitrarily) high proportion subset $\tilde A$ and find ourselves in Case 1, relative to the point set $\mathcal P(\tilde A)\subseteq \mathcal P(A)$.

Indeed, suppose there are no sufficiently wide or high rich rectangles  $A_i^j$ in the above constructed covering of $\mathcal P(A)$ by rectangles, that is both projections of each rich rectangle are  $\leq |A|\log^{-10}|A|$ in size. Then we remove from $A$ the union  $A_i\cup \bigcup_j A_i^j$, calling the resulting set $A_1$. By construction, crudely, $|A_1|\gg (1- \log^{-5}|A|)|A|.$ 

On the other hand, at least half of the mass of $\mathcal P(A)$ has been removed. This means loss of at least a third of the energy. Indeed, $\mathcal P$ is supported on the union of $|\Delta|$ lines with the number of points of $A\times A$ on each line ranging between $N$ and $2N$ and now at least half of the point set $\mathcal P$ has been deleted. Let us estimate from below the difference $\E(A)-\E(A_1).$

To minimise the amount of energy lost  after the deletion of half of the point set $\mathcal P(A)$, supported on $P$ lines, each supporting between $\Delta$ and $2\Delta$ of $A\times A$) one should be deleting the poorest lines, one by one (stopping in the midst of a line is also allowed). To this end, the extremal case arises if half of the mass of $\mathcal P$ were supported on  lines with minimum occupancy $\Delta$ and the other half on lines with maximum occupancy $2\Delta$. In this extremal case one has $|\mathcal P|=4|P|\Delta/3$, the total energy $2|P|\Delta^2$ being supported on $|P|$ lines. Deleting the $2|P|/3$ poorest lines means being left with two-thirds of the energy. 

Thus $\E(A_1)\leq (1- \frac{1}{3} \log^{-1}|A|) \E(A)$.

So we pass from $A$ to $A_1$ and check if we are in Case 1 apropos of $\mathcal P(A_1)$. If yes, we are done (modulo the coming claim about energy), otherwise we iterate the Case 2 deletion procedure.  Once we have encountered Case 2, say  $\log^5|A|$ times, we still retain a  large subset, of $A$, containing a fraction of $A$, bounded from below as $\Omega\left((1- \log^{-5}|A|)^{\log^5|A|}\right)\gg 1$. The energy of this remaining large subset,  however, is at most 
$$(1- \frac{1}{3} \log^{-1}|A|)^{\log^5|A|} \E(A) = o(|A|^2),$$ which is a contradiction. Thus at some point throughout iteration we must encounter Case 1.

\medskip
To verify the final claim of the lemma about energy, it suffices to assume that the desired pair $A',A''$ has been found in Case 1 immediately, that is $\tilde A=A$. Clearly, $$\E(A',A'')\leq \E(A) \sim |P|\Delta^2.$$ On the other hand, the rectangle $\mathcal R=A'\times A''$ still contains $\sim |P|\Delta$ points of $P$, which are all supported on at most $|P|$ lines. 

Thus
$$\sum_{x \in P} r_{A'-A''} (x) \gg |P|\Delta. $$
It follows by Cauchy-Schwarz that
$$\E(A',A'') \gg \sum_{x \in P} r^2_{A'-A''} (x) \gg |P|\Delta^2 \sim \E(A). $$

$\hfill\Box$
 \end{proof}
 
 \section*{Acknowledgemnet} We thank O. Roche-Newton, J. Solymosi, S. Stevens and D. Zhelezov for their advise on exposition of the results in this paper.

\bigskip

\noindent{B.~Murphy\\
	School of Mathematics,\\
	University Walk, Bristol BS8 1TW, UK\\
	{\tt brendan.murphy@bristol.ac.uk}}
	
	\bigskip

\noindent{M.~Rudnev\\
	School of Mathematics,\\
	University Walk, Bristol BS8 1TW, UK\\
	{\tt misarudnev@gmail.com}}

\bigskip

\noindent{I.D.~Shkredov\\
	Steklov Mathematical Institute,\\
	ul. Gubkina, 8, Moscow, Russia, 119991}
\\
and
\\
IITP RAS,  \\
Bolshoy Karetny per. 19, Moscow, Russia, 127994\\
and 
\\
MIPT, \\ 
Institutskii per. 9, Dolgoprudnii, Russia, 141701\\
{\tt ilya.shkredov@gmail.com}

\bigskip

\noindent{Yu. N.~Shteinikov\\
	SRISA,\\
	Nahimovsky prosp. 36, building 1, Russia, 117218\\
	{\tt yuriisht@gmail.com}}


\begin{thebibliography}{99}

\bibitem{BW}
{\sc A.~Balog, T.D.~Wooley, }
\emph{A low--energy decomposition theorem, } Quart. J. Math.,  {\bf 68}:1 (2017), 207--226, doi: 10.1093/qmath/haw023.

	\bibitem{bur3} 
	{\sc J. Bourgain, S. Konyagin, I. Shparlinski,} 
	{\em Product sets of rationals, multiplicative translates of subgroups in residue rings and fixed points of the discrete logarithm, }
	  International Math. Research Notices  (2008), 1--29. 
	  
	 
	
	\bibitem{BCh} 
	{\sc J. Bourgain,  M-Ch. Chang,  }
	{\em On the size of k-fold sum and product sets of integers, } 
	J. Amer. Math. Soc. {\bf 17}:2 (2004), 473--497.
	
	

	
	\bibitem{ChS} 
	{\sc M-Ch. Chang, J. Solymosi,} 
	{\em Sum-product theorems and incidence geometry,}
J. Eur. Math. Soc. (JEMS) {\bf 9}:3 (2007),  545--560.

\bibitem{CG1} 
{\sc J. Cilleruelo, M. Garaev, }
{\em  Congruences involving product of intervals and sets with small multiplicative doubling modulo a prime and applications,} 
  Math. Proc. Cambridge Phil. Soc.,  Vol. 160, Issue 03, pp. 477--494, May 2016.
  
\bibitem{CG2}
{\sc J. Cilleruelo, M. Garaev,} 
{\em The congruence $x^x = \lambda \pmod p$,}
Proc. Amer. Math. Soc. (to appear).

\bibitem{E} {\sc G. Elekes,} {\em On the number of sums and products}, Acta Arith. {\bf 81} (1997), 365--367.

\bibitem{ER} {\sc G. Elekes, I. Z. Ruzsa, } {\em Few sums, many products,} Studia Sci. Math. Hungar. {\bf 40}:3 (2003),  301--308.




\bibitem{ES}
{\sc P.~Erd\H{o}s, E.~Szemer\'{e}di, }
\emph{On sums and products of integers, }
Studies in pure mathematics, 213--218, Birkh\"auser, Basel, 1983.

\bibitem{GS} {\sc A. Granville, J. Solymosi,} {\em Sum-product formulae,} in Recent Trends in Combinatorics, IMA vol. 159, pp 419--451, Springer 2016.


\bibitem{H-K}
{\sc D.~R.~Heath--Brown, S.~V.~Konyagin, }
{\em New bounds for Gauss sums derived from $k$th powers, and for Heilbronn's exponential sum, }
Quart. J. Math. {\bf 51} (2000), 221--235.





\bibitem{K_Tula} {\sc S.~V.~Konyagin,}
{\em Estimates for trigonometric sums and for Gaussian sums, }
IV International conference "Modern problems of number theory and its applications". Part 3 (2002), 86--114.


\bibitem{KS-}
{\sc S.V.~Konyagin, I.D.~Shkredov, }
{\em On sum sets of sets, having small product sets, }
Transactions of Steklov Mathematical Institute, {\bf 3}:290 (2015), 304--316.

	
	\bibitem{KS}{\sc S.V.~Konyagin, I.D.~Shkredov, }
{\em New results on sum--products in $\mathbb R$, }
 Proc. Steklov Inst. Math., {\bf 294}:78, (2016), 87--98. 
 
 
  \bibitem{KS1}
 {\sc S.~V.~Konyagin, I.~Shparlinski, }
 {\em Character sums with exponential functions, } Cambridge University Press, Cambridge, 1999.
 
 
 


\bibitem{57} {\sc B. Murphy, G. Petridis, O. Roche-Newton, M. Rudnev, I. D. Shkredov,} {\em New results on sum-product type growth over fields,} arXiv:1702.01003v3 [math.CO]  9 Mar 2017.


\bibitem{Li_R-N}
{\sc L. Li, O. Roche-Newton, }
{\em Convexity and a sum-product type estimate, } Acta Arith. {\bf 156}:3 (2012),  247--255.


\bibitem{MSS}
{\sc S.~Macourt, I.~D.~Shkredov, I.~Shparlinski, }
{\em Multiplicative energy of shifted sugroups and bounds on exponential sums with trinomials in finite fields, }
Canadian J. Math., accepted, arXiv:1701.06192v1 [math.NT] 22 Jan 2017.



\bibitem{Malykhin_p^2}
{\sc Yu.~V.~Malykhin, }
{\em Bounds for exponential sums over $p^2$, }
J. Math. Sci. {\bf 146}:2 (2007), 5686--5696.


\bibitem{Mit}
{\sc D.A. Mit'kin. }
{\em Estimation of the total number of total number  of the rational  points  on a set of curves in a simple finite field, }
Chebyshevsky sbornik {\bf 4}:4 (2003), 94--102.


\bibitem{VarII}
{\sc B. Murphy, O. Roche-Newton, I.D. Shkredov, }
\emph{Variations on the sum--product problem II, } arXiv:1703.09549v2 [math.CO] 29 Mar 2017. 


\bibitem{pap}
{\sc F. Pappalardi }
{\em On the Order of Finitely Generated Subgroups of Q*(modp) and Divisors of p-1.}
J. Number Theory
{\bf 57}:2 (1996),
207--222.

\bibitem{RRS} {\sc O. Roche-Newton, M. Rudnev, I. D. Shkredov,} {\em New sum-product type estimates over finite fields,} Adv. Math. {\bf 293}(2016), 589--605.

\bibitem{misha}
{\sc M.~Rudnev,}
{\em On the number of incidences between planes and points in three dimensions,}  Combinatorica (2017), 1--36, doi:10.1007/s00493-016-3329-6.

\bibitem{RSS} {\sc M. Rudnev, S. Stevens, I.D. Shkredov, }
{\em On The Energy Variant of the Sum-Product Conjecture,}  arXiv  1607.05053v5  [math.CO]  5 June 2017.


\bibitem{SS}  
{\sc T. Schoen and I. D. Shkredov, } {\em Additive properties of multiplicative subgroups of $\F_p$,} Quart. J. Math. {\bf 63}:3 (2012), 713--722. 

	\bibitem{SS1}  
{\sc T. Schoen, I. D. Shkredov, }
\emph{Higher moments of convolutions, }
J. Number Theory {\bf 133}:5 (2013), 1693--1737.

\bibitem{S_sp}
{\sc I. D. Shkredov, } {\em Some applications of W. Rudin's inequality to problems of combinatorial number theory,} Unif. Distrib. Theory {\bf 6}:2 (2011),  95--116.



\bibitem{S_ineq}
{\sc I. D. Shkredov, }
\emph{Some new inequalities in additive combinatorics, } MJCNT, {\bf 3}:2 (2013), 237--288.


\bibitem{s_mixed} {\sc I.~D.~Shkredov, }
\emph{Some new results on higher energies, }
Transactions of MMS, {\bf 74}:1 (2013), 35--73.

\bibitem{Ilmed}  
{\sc I. D. Shkredov, }
\emph{On exponential sums over multiplicative subgoups of medium size, }
Finite Fields Appl. {\bf 30} (2014), 72--87.




\bibitem{S_tripling}
{\sc I. D. Shkredov, }
\emph{On tripling constant of multiplicative subgroups, } 
Integers \#A75 {\bf 16} (2016), 1--9.


\bibitem{S_AA}
{\sc I. D. Shkredov, }
\emph{Some remarks on sets with small quotient set, }
Proceedings of the Steklov Institute of Mathematics, 
accepted; arXiv:1603.04948v2  [math.CO]  15 May 2016. 


\bibitem{SSV}
{\sc I. D. Shkredov, E. Solodkova, I. Vyugin, } 
\emph{On the additive energy of Heilbronn's subgroup, } 
Mat. Zametki, {\bf 101}:1 (2017), 43--57; English transl.  Mat. Zametki, {\bf 101}:1 (2017), 58--70.

\bibitem{SV}
{\sc I. D. Shkredov, I. Vyugin, }  {\em On additive shifts of multiplicative subgroups,} (Russian) Mat. Sb. {\bf 203} (2012), no. 6, 81--100; translation in Sb. Math. {\bf 203} (2012), no. 5-6, 844--863.


\bibitem{SZ}
{\sc I. D. Shkredov, D. Zhelezov, } 
\emph{On additive bases of sets with small product set, } International Math. Research Notices, accepted; arXiv:1606.02320v2 [math.NT] 14 Jun 2016. 

\bibitem{Iur}
{\sc Yu. Shteinikov,}
{\em Estimates of trigonometric sums over subgroups and some of their applications, }
Mathematical Notes {\bf 98}:3-4 (2015),  667--684.



\bibitem{So}
{\sc J. Solymosi, }
{\em Bounding multiplicative energy by the sumset, }
Adv. Math. {\bf 222}:2 (2009), 402--408.

\bibitem{SoT} {\sc J. Solymosi, G. Tardos,} {\em On the number of $k$-rich transformations,} Computational geometry (SCG'07), 227--231, ACM, New York, 2007.


\bibitem{SdZ} {\sc S. Stevens, F. de Zeeuw, } {\em An Improved Point-Line Incidence Bound Over Arbitrary Fields}, Bull. Lon. Math. Soc., accepted; arXiv:1609.06284v4  [math.CO] 20 September 2016.



\bibitem{sz-t}
{\sc E.~Szemer\' edi, W.~T.~Trotter,} 
{\em Extremal problems in discrete geometry,} 
Combinatorica {\bf 3} (1983), 381--392.


\bibitem{TV}
{\sc T. Tao, V. Vu, }
{\em Additive Combinatorics, }
Cambridge University Press (2006).

  
	

\end{thebibliography}
\end{document}